\newtheorem{theorem}{Theorem}[section]
\newtheorem{corollary}{Corollary}
\newtheorem{lemma}[theorem]{Lemma}
\newtheorem{proposition}{Proposition}
\theoremstyle{definition}
\newtheorem{remark}{Remark}
\newcommand{\eps}[1]{{#1}_{\varepsilon}}
\newcommand{\en}     {{\varepsilon_n}}
\renewcommand{\epsilon}  {\varepsilon}
\newcommand{\orbit}{\gamma}
\newcommand{\cc}[1]{}
\newcommand{\one}{{\bf1}}
\newcommand{\sym} {\mathfrak{s}}
\newcommand{\F} {{ \mathcal F }}
\newcommand{\R} {\mathbf{R}}
\newcommand{\Z} {\mathbf{Z}}
\renewcommand{\P} {\mathbf{P}}
\newcommand{\E} {\mathbf{E}}
\newcommand{\avg}[1]{\mathcal{A}(#1)}
\newcommand{\hf}    {{\textstyle \frac12}}
\newcommand{\e}     {\varepsilon}
\newcommand{\sign}   {\mathrm{sign}}
\newcommand{\sn}   {\mathfrak{sn}}
\newcommand{\s}   {\mathfrak{s}}
\renewcommand{\stop} {\kappa^\epsilon_M}
\newcommand{\stopL} {\kappa_M}
\newcommand{\bpf}[1][Proof]{{\noindent {\sc #1: }}}
\newcommand{\epf}   {{\hfill $\square$}}
\newcommand{\eqdef}{\!\overset{\text{\tiny def}}{=}\!}
\newcommand{\BB}    {\mathcal{B}}
\title[Invariant measure selection by
noise]{Invariant measure selection by
noise\\ An Example}
\author[Jonathan Mattingly and Etienne Pardoux]{}
 \subjclass{Primary: 60H10, 37L40; Secondary: 37A60, 34C29.}
  \keywords{inviscid limits, zero--noise limit, invariant measure
    selection, stochastic dynamically systems.}
 \email{jonm@math.duke.edu}
 \email{etienne.pardoux@univ-amu.fr}
\begin{document}

\begin{abstract}
  We consider a deterministic system with two conserved quantities and infinity many 
  invariant measures. However the systems possess a unique invariant 
  measure when enough stochastic forcing and balancing dissipation are 
  added. We then show that as the forcing and dissipation are removed 
  a unique limit of the deterministic system is selected. The exact 
  structure of the limiting measure depends on the specifics of the stochastic forcing. 
\end{abstract}
\maketitle

\centerline{\scshape Jonathan C. Mattingly}
\medskip
{\footnotesize

 \centerline{Mathematics Department and Department of Statisical Science}
   \centerline{Duke University, Box 90320 }
   \centerline{Durham, NC 27708-0320, USA}
} 

\medskip

\centerline{\scshape Etienne Pardoux}
\medskip
{\footnotesize
 
 \centerline{ Laboratoire d'Analyse, Topologie, Probabilit\'es}
   \centerline{Universit\'e de Provence
39, rue F. Joliot-Curie}
   \centerline{F-13453 Marseille cedex 13, France}
}

\bigskip
\begin{center}
  \textit{Dedicated to the memory of Jos\'e Real}
\end{center}

\section{Introduction}
\label{sec:introduction}

There is much interest in the regularizing effects of noise on the
longtime dynamics. One often speaks informally of adding a balancing
noise and dissipation to a dynamical system with many invariant
measures and then studying the zero noise/dissipation limit as a way
of selecting the ``physically relevant'' invariant measure.

There are a number of settings where such a procedure is fairly well
understood. In the case of a Hamiltonian or gradient system with
sufficiently non-degenerate noise, Wentzell-Freidlin theory gives a
rather complete description of the effective limiting
dynamics \cite{FreidlinWentzell} in terms of a limiting ``slow'' system
derived through a quasi-potential and deterministic averaging. In the
gradient case the stochastic invariant measures concentrate on the
attracting structures of the dynamics. In the Hamiltonian setting,
Wentzell-Freidlin theory considers the slow dynamics of the
conserved quantity (the Hamiltonian)
when the system is subject to noise. It is the zero noise limit of
these dynamics which decides which mixture of the Hamiltonian
invariant measures is selected in the zero noise limit.

In the case of system with an underlying hyperbolic structure, such as
Axiom A,  it is known that the zero noise limit of random perturbations
selects a canonical SRB/''physical
measure''\cite{sinai1,sinai2,Ruelle,Kifer}. This relies fundamentally
on the expansion/contraction properties of the underlying
deterministic dynamical system. See \cite{young} for a nice discussion
of these issues. The Axiom A assumption ensures that the deterministic
dynamics has a rich attractor which attracts a set of positive
Lebesgue measure. \cc{need to check something here}

One area where the idea of the relevant invariant measure being
selected through a zero noise limit is prevalent is in the study of
stochastically forced and damped PDEs. Two important examples are the
stochastic Navier-Stokes equations and the stochastic KdV
equation. Both of these equations have been studied in a sequence of
works by Kuksin and his co-authors
\cite{kuksin4,kuksin5,kuksin6,kuksin2,kuksin3}. In all these works,
tightness is established by balancing the noise and dissipation as the
zero noise limit is taken. Any limiting invariant measure is shown to
satisfy an appropriate limiting equation. Typically a number of
properties are inherited from the pre-limiting invariant measure.

The hope is that the study of these limiting measures will give some
insight into important questions for the original, unperturbed
equations. In the case of the Navier-Stokes equations one would be
interested in understanding questions such as the existence of energy
cascades and turbulence. Setting aside the question of whether the
regularity of the solutions in \cite{kuksin4} is appropriate for
turbulence, it is interesting to understand if the noise selects a
unique limit and what are the obstructions to such uniqueness as they
give information about the structure of the deterministic phase
space. In all of the works
\cite{kuksin4,kuksin5,kuksin6,kuksin2,kuksin3} the question of
uniqueness of the limit is not addressed and seems out of reach.  (Though a rencent work \cite{Kuksin13} makes progress in this direction. See the
note at the end of the introduction.)

The equation for the evolution
a 2D incompressible fluid's vorticity $q(x,t)$ (a scalar) on the
2-torus subject to stochastic excitation can be written as
\begin{equation*}
  \dot  q(x,t) = \nu \Delta q(x,t)  + B\big(q(x,t),q(x,t)\big)+ \sqrt{\nu}\sum_{k \in \Z^2}
 \sigma_k e^{ik \cdot x} \dot W_t^{(k)}
\end{equation*}
where $\nu>0$ is the viscosity, $\Delta$ is the Laplacian, $\sigma_k$
are constants to be chosen, $\{W_t^{(k)} : k
\in \Z^2\}$ are a collection of  standard one-dimensional Wiener
processes and  $B(q,q)$
is a quadratic non-linearity such that $\langle B(q,q), q
\rangle_{L^2}=0$. The scaling of $\nu$ is chosen to keep the spatial
$L^2$ norm of order one in the $\nu \rightarrow 0$ limit and is the only
scaling on a fixed torus which will result in a non-trivial sequence of
tight processes. On a fixed interval, the formal $\nu =0$ limit of
this is equation is the Euler equation which conserves its
Hamiltonian (the energy or $L^2$ norm) but also has an infinite
collection of other conserved quantities since the vorticity is simply
transported about space. This means that \textit{a priori} there will
be many conserved quantities whose slow evolution must be analyzed.

Inspired by models in
\cite{Lorenz_1963,Milewski_Tabak_Vanden-Eijnden_2002} and the Euler
equation itself, we construct a model problem in the form of an ODE in
$\R^3$ such that the non-linearity is quadratic and conserves the norm
of the solution as in analogy with the Euler non-linearity. We will
also see that our
model system in fact possesses two conserved quantities (the most it
could have without becoming trivial). In many ways our analysis follows the
familiar pattern of \cite{FreidlinWentzell} in that we change time to
consider the evolution of the conserved quantities from the unforced
system on a long time interval which grows as the noise is taken to
zero. This produces a limiting system which captures the effect of the
noise. However multiple conserved quantities
are not usually treated in Wentzell-Freidlin theory and  the complications of having
more then one are non-trivial in our case. In particular, the limiting system does not have a unique solution.
 Nonetheless, we are able to show that a particular solution is selected by
the limiting procedure which in turn leads to a unique invariant
measure for the limiting system being selected.

Since the limiting system does not have unique solutions there are
many possible invariant measures depending on which of the
solutions are chosen. It is interesting to note that identifying the
limiting ``averaged'' solution is not sufficient to identify the
likely limit of the invariant measure. Analysis of the limiting
solution in isolation revel domain walls which separate different
regions of phase space and along which the diffusion degenerates
giving rise to the possibility of solutions which could spend arbitrary
mounts of time on the domain boundaries. Only through the analysis of
pre-limiting systems do we discover that the systems selects the
solutions which spend zero time on the domain walls.

These domain walls are the planes $\{x=y\}$ in $\R^3$ and correspond
to heteroclinic cycles from the original deterministic system made up
of homoclinic orbits connecting the fix points. Hence it is not
surprising that the limiting system supports solutions which could spend
arbitrarily long times on these orbits. However, it is interesting
that the limiting procedure selects solutions which do not become
trapped near the heteroclinic orbits.

After the completion and submission of this work, we became aware of a
recent work by Kuksin which proves that the zero noise/damping limit
of the stochastic Complex Ginzburg Landau (CGL) selects a unique
invariant measure from the many possible measures which are invariant
for the formal deterministic limit \cite{Kuksin13}. That problem is
very much in the spirit of the one discussed here. Our example is finite dimensional. 
However, the associated 
limiting martingale problem for the fast variables is more complicated
and does not have a unique solution. Like the CGL, our example has a
simplified orbit structure which facilitates averaging. More
complicated settings such as the stochastic Navier Stokes equations
are still out of reach. We hope our paper helps clarifying some
of the issues involved.

\section{Model System}
\label{sec:model-system}

As an exercise in studying the zero noise/dissipation limit of
conservative systems, we have chosen to study the following three--dimensional system:
\begin{align}\label{epsZero}
   \dot \xi_t =
 \BB(\xi_t,\xi_t)
\end{align}
with $\xi_0=(X_0,Y_0,Z_0) \in \R^3$
where if $\xi=(x,y,z) \in \R^3$ and $\hat \xi=( \hat x,\hat y,\hat z)$
then  $\BB$ is the symmetric bi-linear form defined by
\begin{align}
  \label{eq:B}
  \BB(\xi,\hat\xi) \eqdef  \frac12
  \begin{pmatrix}
   y\hat z + \hat y z\\
 x\hat z + \hat x z\\
-2x\hat y - 2\hat x y
  \end{pmatrix}\,.
\end{align}
We will write $\varphi_t$ for the flow map induced by
\eqref{epsZero}, i.e. $\xi_t=\varphi_t( \xi_0)$. We will constantly write
$(X_t,Y_t,Z_t)$ for $\xi_t$ when we wish to speak of the components of $\xi_t$.

Since
\begin{align}
 \label{eq:Bzero}
 B(\xi,\xi)\cdot \xi =0
\end{align}
we see that $|\xi_t|^2=X_t^2+Y_t^2+Z_t^2$ is constant along trajectories of
\eqref{epsZero}. Similarly one sees that $X_t^2-Y_t^2$ is also conserved
by the dynamics of \eqref{epsZero}. Since any linear combination is also
conserved, we are free to consider $2X_t^2+Z_t^2$ and $2Y_t^2+Z_t^2$,
which are more symmetric. Since we will typically use the
second pair, we introduce the  map
\begin{align}
  \label{eq:Phi}
  \Phi\colon (x,y,z)\mapsto (u,v)=(2x^2+z^2,2y^2+z^2)\,.
\end{align}
A moments reflection shows
that the existence of these two conserved quantities implies that all
of the orbits of \eqref{epsZero} are bounded and most are closed orbits,
topologically equivalent to a circle. All orbits live on the surface
of a sphere whose radius is dictated by the values of the conserved
quantities. More precisely, given the initial condition $\xi_0 \in
\R^3$  the orbit $\{ \xi_t : t \geq0\}$ is contained in the set
\begin{align}
  \label{eq:Orbit}
  \Gamma=\Big\{ \xi : \Phi(\xi)=\Phi(\xi_0) \Big\}
\end{align}
 To any initial point $\xi_0=(X_0,Y_0,Z_0)$ contained in a closed orbit, we can associate a measure
defined by the following limit
\begin{align}\label{ergodic}
  \mu_{\xi_0}(dx , dy , dz )\eqdef\lim_{t \rightarrow
    \infty} \frac{1}t\int_0^t \delta_{\xi_s}(dx , dy , dz )
  ds \,.
\end{align}
We will show  in Section~\ref{sec:ergod-invar-meas} that this invariant measure depends only upon
$\Phi(\xi_0)$, and a choice of a sign.
Any such  measure  is an invariant measure for the dynamics
given by \eqref{epsZero}. Hence we see that  \eqref{epsZero} has infinitely
many invariant measures. It is reasonable to expect that the addition of sufficient
driving noise and balancing dissipation, will result in a system with a
unique invariant measure. Our goal is to study its limit as the
noise/dissipation are scaled to zero. We are specifically interested
in understanding whether this procedure selects a unique convex combination  of the
measures for the underlying deterministic system \eqref{epsZero}.

 More concretely for $\epsilon>0$, we will explore the following  stochastic
differential system
\begin{equation}\label{eps}
  \begin{split}
    \dot \xi^\epsilon_t = \BB(\xi_t^\epsilon,\xi_t^\epsilon)- \epsilon
    \xi_t^\epsilon + \sqrt{\epsilon} \sigma \dot
    W_t,
 \end{split}
\end{equation}
with $\xi_0^\epsilon=(X_0,Y_0,Z_0) \in \R^3$,
$$\sigma=\begin{pmatrix}\sigma_1 & 0 & 0\\
0 & \sigma_2 & 0\\
0 & 0 & 0\end{pmatrix}
\text{ and }
W_t=(W_t^{(1)},W_t^{(2)}),$$
where the two components  $W_t^{(1)}$ and $W_t^{(2)}$ are mutually independent
standard Brownian motions. In all this paper, we assume that $\sigma_1>0$ and $\sigma_2>0$.

As above, we will write $(X^\epsilon_t,Y^\epsilon_t,Z^\epsilon_t)$ when we wish to
discuss the coordinates of $\xi^\epsilon_t$.

For each $\epsilon>0$, the three--dimensional hypoelliptic diffusion process
is positive recurrent and ergodic, its
unique invariant probability measure $\mu_\epsilon$ is absolutely
continuous with respect to Lebesgue measure, with a density which
charges all open sets.

Our aim is to study the limit of $\mu_\epsilon$, as
$\epsilon\to0$.
We first note that as $\epsilon\to0$, the process
$(X^\epsilon_t,Y^\epsilon_t,Z^\epsilon_t)$ converges to the solution of \eqref{epsZero}
on any finite time interval.

The main result of this article is that
there exists a probability  measure $\mu$ which is absolutely
continuous with respect to Lebesgue measure and
so that $\mu^\varepsilon$ converges weakly to $\mu$ as $\varepsilon
\rightarrow 0$. Of course, $\mu$ is a mixture of the ergodic invariant measures appearing in \eqref{ergodic},
which we shall describe.

It is natural to ask what is the effect of adding noise also  in the
$z$-direction. Unfortunately this leads to unexpected complications
which at present we are not able to handle.

\section{Main Results}
\label{sec:main-results}

For $\epsilon \geq 0$, we define the Markov semigroup $P_t^\epsilon$
associated with \eqref{eps} by
\begin{equation*}
  (P_t^\epsilon\phi)(\xi) = \mathbf{E}_{\xi} \phi(\xi_t^\epsilon)
\end{equation*}
for bounded $\phi\colon \R^3 \rightarrow \R$.

The dynamics obtained by formally setting $\epsilon=0$  are
deterministic.  We will write $P_t$
rather than $P^0_t$ for the corresponding Markov semigroup which is
defined by $(P_t \phi)(\xi)=\phi(\xi_t)$.

\begin{theorem}\label{invMeasureEpsPos}
  For each $\epsilon >0$, $P_t^\epsilon$ has a
  unique invariant probability measure $\mu^\epsilon$ which has a
  $C^\infty$ density which is everywhere positive.
\end{theorem}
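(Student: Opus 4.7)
The plan is to establish, in turn, (i) existence of an invariant measure via a Lyapunov argument, (ii) smoothness of transition densities via Hörmander's hypoellipticity theorem, (iii) everywhere positivity via a controllability/support argument, and (iv) uniqueness via the strong Feller plus irreducibility implication. The main obstacle will be (iii): noise is present only in the $x$ and $y$ directions, so positivity of the density in the $z$ direction must come from the nonlinear coupling.

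\textbf{Step 1 (existence).} I would apply It\^o's formula to $V(\xi)=|\xi|^2$. Because of the cancellation \eqref{eq:Bzero},
\begin{equation*}
  d|\xi^\epsilon_t|^2 = \bigl[-2\epsilon|\xi^\epsilon_t|^2+\epsilon(\sigma_1^2+\sigma_2^2)\bigr]\,dt
  + 2\sqrt{\epsilon}\,\xi^\epsilon_t\cdot\sigma\,dW_t,
\end{equation*}
so $\E|\xi^\epsilon_t|^2$ is uniformly bounded in $t$. Together with non-explosion (which follows from the same estimate), this gives tightness of the occupation measures, and Krylov--Bogolyubov produces at least one invariant probability measure $\mu^\epsilon$.

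\textbf{Step 2 (smooth density).} Writing the SDE in Stratonovich form, the drift is $b(\xi)=(yz-\epsilon x,\,xz-\epsilon y,\,-2xy-\epsilon z)$ and the diffusion fields are $V_1=\sigma_1\partial_x$ and $V_2=\sigma_2\partial_y$. A direct computation gives
\begin{equation*}
 [V_1,b]=\sigma_1(-\epsilon,\,z,\,-2y),\qquad
 [V_2,[V_1,b]]=-2\sigma_1\sigma_2\,\partial_z.
\end{equation*}
Thus $V_1$, $V_2$ and a single iterated bracket span $\R^3$ at every point, Hörmander's condition holds uniformly, and the generator of $P^\epsilon_t$ is hypoelliptic. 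Consequently $P^\epsilon_t$ admits a $C^\infty$ transition density $p^\epsilon_t(\xi,\eta)$, and by standard arguments $\mu^\epsilon$ inherits a $C^\infty$ density from the identity $\mu^\epsilon=\mu^\epsilon P^\epsilon_t$.

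\textbf{Step 3 (positivity) and Step 4 (uniqueness).} I would invoke the Stroock--Varadhan support theorem: the support of $p^\epsilon_t(\xi,\cdot)$ equals the closure of the endpoints reached by the controlled system $\dot\xi = b(\xi)+\sigma_1 u_1 e_1+\sigma_2 u_2 e_2$ over smooth controls $u$. To show this set is all of $\R^3$, I would first note that $u_1,u_2$ let one prescribe $(x(t),y(t))$ as an arbitrary smooth curve, and the $z$-coordinate then satisfies the scalar linear ODE $\dot z = -2x(t)y(t)-\epsilon z$, whose solution at time $t$ is $z(t)=e^{-\epsilon t}z(0)-2\int_0^t e^{-\epsilon(t-s)}x(s)y(s)\,ds$; by choosing $x(s)y(s)$ to realize any desired value of this integral, any target $z(t)$ is attainable together with any prescribed $(x(t),y(t))$. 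This is the main technical step, but it is elementary because the $z$-equation is linear in $z$ given $(x,y)$. Hence $p^\epsilon_t(\xi,\eta)>0$ for all $\xi,\eta\in\R^3$ and $t>0$, which gives strictly positive density for $\mu^\epsilon$ and makes $P^\epsilon_t$ both strong Feller (by Step 2) and irreducible. Doob's theorem then yields uniqueness of the invariant measure, completing the proof.
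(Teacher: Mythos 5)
Your proposal follows essentially the same route as the paper: a Lyapunov estimate on $|\xi|^2$ plus Krylov--Bogolyubov for existence, H\"ormander's condition (with exactly the brackets $[V_1,b]$ and $[V_2,[V_1,b]]$) for a smooth transition density, and the Stroock--Varadhan support theorem with the same controllability argument---steer $(x,y)$ arbitrarily, then use the linear $z$-equation---for positivity. The only difference is cosmetic: for uniqueness you invoke Doob's theorem (strong Feller plus irreducibility), whereas the paper closes by noting that distinct ergodic invariant measures must have disjoint supports, which is impossible once every invariant measure has an everywhere-positive density; both are standard and equivalent here.
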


\begin{theorem}\label{limitinvmeas}
There exists a  probability
  measure $\mu$ such that $\mu^\epsilon \Rightarrow \mu$ as
  $\epsilon \rightarrow 0$ and furthermore such that $\mu$ is invariant for the dynamics generated by
  \eqref{epsZero} in that $\mu P_t = \mu$ for all $t>0$.
   In addition,
  $\mu$ is absolutely continuous with respect to Lebesgue measure on $\R^3$, with
  a density which is positive on the complement of $\{x=y\}\cup\{x=-y\}$.
  \end{theorem}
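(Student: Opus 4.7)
The plan is to decompose the argument into four steps: (i) tightness of $\{\mu^\epsilon\}$, (ii) invariance of any subsequential weak limit under the deterministic flow $P_t$, (iii) identification of the limit via stochastic averaging of the conserved quantities $\Phi(\xi^\epsilon)$ on the slow time scale $1/\epsilon$, and (iv) absolute continuity with positivity of the density. For (i), apply Itô to $|\xi^\epsilon_t|^2$, using $\BB(\xi,\xi)\cdot\xi=0$ and $\sigma_{33}=0$, to get
$$
d|\xi^\epsilon_t|^2 = \bigl(-2\epsilon|\xi^\epsilon_t|^2+\epsilon(\sigma_1^2+\sigma_2^2)\bigr)\,dt + 2\sqrt{\epsilon}\,\xi^\epsilon_t\cdot\sigma\,dW_t.
$$
Taking expectations under the stationary law yields $\mathbf{E}_{\mu^\epsilon}|\xi|^2=(\sigma_1^2+\sigma_2^2)/2$ uniformly in $\epsilon$, hence tightness by Prokhorov, and a weakly convergent subsequence $\mu^{\epsilon_n}\Rightarrow\mu$.

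For (ii), since the drift in \eqref{eps} is locally Lipschitz and the noise has amplitude $\sqrt{\epsilon}$, standard SDE estimates give $P_t^\epsilon\phi\to P_t\phi$ uniformly on compact sets for every $\phi\in C_b(\R^3)$ and every $t>0$. Combined with the tightness of $\mu^\epsilon$ and continuity of $P_t\phi$, passing to the limit in the identity $\int P_t^\epsilon\phi\,d\mu^\epsilon=\int\phi\,d\mu^\epsilon$ gives $\int P_t\phi\,d\mu=\int\phi\,d\mu$, i.e.\ $\mu P_t=\mu$.

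The heart of the theorem is (iii). Because the deterministic dynamics preserves $\Phi$, the slow variables $(U,V)=\Phi(\xi)$ satisfy
$$
dU^\epsilon_t=\bigl(2\epsilon\sigma_1^2-2\epsilon U^\epsilon_t\bigr)\,dt+4\sqrt{\epsilon}\sigma_1 X^\epsilon_t\,dW^{(1)}_t,
$$
and similarly for $V^\epsilon_t$ with $\sigma_2,Y^\epsilon,W^{(2)}$ in place of $\sigma_1,X^\epsilon,W^{(1)}$. Rescaling to $s=\epsilon t$, the fast variables $(X,Y,Z)$ revolve many times around the orbit $\Gamma$ of the current value of $(U,V)$ between small increments of $(U,V)$. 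Invoking ergodicity of the fast flow on $\Gamma$ with invariant measure $\mu_{\xi_0}$ from \eqref{ergodic}, stochastic averaging should yield a limiting martingale problem on the quadrant $\{u,v\geq 0\}$ whose coefficients are $\mu_{\xi_0}$-averages of $X^2$ and $Y^2$. A direct inspection shows that this limiting diffusion is degenerate along the diagonal $\{u=v\}$, the image under $\Phi$ of the heteroclinic cycles in $\{x=\pm y\}$, so the averaged martingale problem is not well posed without a boundary specification.

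The main obstacle is therefore to show that the pre-limiting dynamics selects the unique solution of the averaged martingale problem which spends zero Lebesgue time on $\{u=v\}$. This requires a quantitative estimate on the occupation time of $\xi^\epsilon$ near $\{x=\pm y\}$ (essentially a Krylov-type bound exploiting the non-degeneracy of the noise in the $x$- and $y$-directions away from the heteroclinic planes) that survives the $\epsilon\to 0$ limit. Once uniqueness of the averaged invariant measure $\bar\mu$ is secured, one concludes $\Phi_*\mu^\epsilon\Rightarrow\bar\mu$ without extracting subsequences, and combining with (ii) plus the explicit form of $\mu_{\xi_0}$ on each non-singular orbit pins down $\mu$. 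For (iv), disintegrating $\mu$ against $\bar\mu$ and using the smoothness and positivity of $\mu_{\xi_0}$ on each orbit $\Gamma\not\subset\{x=\pm y\}$ give absolute continuity and the asserted positivity of the density on $\R^3\setminus(\{x=y\}\cup\{x=-y\})$.
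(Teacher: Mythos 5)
Your overall architecture (tightness, invariance of the limit under $P_t$, passing to the slow variables $(U,V)$ and averaging, then positivity) matches the paper's plan, and your steps (i), (ii), and (iv) are essentially correct. But there are two genuine gaps in step (iii) which is exactly where the difficulty lies.

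First, you correctly identify that the averaged $(U,V)$ martingale problem degenerates on the diagonal and that one must show the pre-limiting dynamics selects the solution spending zero Lebesgue time there, but you then write only that this ``requires a quantitative estimate on the occupation time\ldots\ that survives the $\epsilon\to0$ limit'' without supplying it. This is the heart of the matter and cannot be waved away. The paper proves it in two pieces: Theorem~\ref{accumul} gives a logarithmic occupation-time bound on the diagonal uniformly in $\epsilon$, and the Portenko-type estimates (Lemmas~\ref{Por1.1}--\ref{port}) plus the time change $(U,V)\mapsto(H,K)$ establish weak well-posedness for the resulting singular-drift SDE and uniqueness of the invariant measure $\lambda$ of $Q_t$ (Theorem~\ref{thm:invMeasureUV}). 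Without these your ``once uniqueness\ldots\ is secured'' is a placeholder, not a proof.

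Second, and more subtly, knowing $\Phi_*\mu^\epsilon\Rightarrow\lambda$ together with invariance under $P_t$ does \emph{not} pin down $\mu$. For each $(u,v)$ off the diagonal there are two ergodic orbits $\Gamma_{u,v}^+$ and $\Gamma_{u,v}^-$, so an invariant measure with $(u,v)$-marginal $\lambda$ can still assign any measurable convex weight $c(u,v)\in[0,1]$ to the $+$ orbit. The paper resolves this by Proposition~\ref{symInvMeasure}: the map $\sym_\pm(x,y,z)=(-x,-y,z)$ takes solutions of \eqref{eps} to solutions (for another Brownian motion), hence $\mu^\epsilon\sym_\pm^{-1}=\mu^\epsilon$ for all $\epsilon$, so any limit satisfies $\mu\sym_\pm^{-1}=\mu$, forcing $c\equiv\tfrac12$. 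Corollary~\ref{cor:uniqRepresentationThm} then says that symmetry plus the $\Phi$-marginal uniquely determine the mixture. Your sentence ``combining with (ii) plus the explicit form of $\mu_{\xi_0}$ on each non-singular orbit pins down $\mu$'' silently assumes this symmetry argument; as written it does not follow.

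Once these two points are supplied, your steps (i), (ii), and (iv) go through essentially as in the paper, and the disintegration in (iv) (using formula \eqref{ocupation} and the positivity and continuity of the density of $\lambda$ away from the diagonal) is the right way to get positivity of the density of $\mu$ off $\{x=\pm y\}$.
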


The paper is organized as follows. Section~\ref{sec4} studies finite time convergence of $\xi^\epsilon$ as $\epsilon\to0$.
Section~\ref{sec5} studies existence and uniqueness of the invariant
measure for $\xi^\epsilon$.  Namely, this section proves
Theorem \ref{invMeasureEpsPos}. Section~\ref{sec6} studies the deterministic system on a faster time scale, more precisely it introduces the process
$(U^\epsilon_t,V^\epsilon_t)=\Phi(X^\epsilon_{t/\epsilon},Y^\epsilon_{t/\epsilon},Z^\epsilon_{t/\epsilon})$. Assuming
some results from Section~\ref{sec8}, we uniquely characterize the
limit $(U_t,V_t)$ of the process   $(U^\epsilon_t,V^\epsilon_t)$ as
$\epsilon\to0$, and show that that limit has a unique invariant
probability measure. Section~\ref{sec7} studies very precisely the
deterministic dynamics behind the ODE \eqref{epsZero} obtained by
formally setting $\epsilon=0$ in \eqref{eps}. Section~\ref{sec8}
establishes crucial results which were assumed to hold in the discussion  in Section~\ref{sec6}, the main important and most delicate one being the convergence of the quadratic variation of $(U^\epsilon,V^\epsilon)$, which builds upon the analysis in Section~\ref{sec7}. Finally Section~\ref{sec9} is devoted to the proof of Theorem \ref{limitinvmeas}.

\section{Finite Time Convergence on original timescale}\label{sec4}

In this section we show that the dynamics of  stochastic dynamics
given by \eqref{eps} converge to the deterministic dynamics given by
\eqref{epsZero}. Hence the limit as $\epsilon \rightarrow 0$ on this
time scale does not help in understanding the selection of any
limiting invariant measure as $\epsilon \rightarrow 0$.

\begin{lemma} \label{rhoBound}There exists a positive constant $c$ so that
  if $\xi_0=\xi_0^\epsilon \in \R^3$ then for all $\epsilon >0$
  \begin{align*}
    \E|\xi_t^\epsilon - \xi_t|^2 \leq \epsilon\frac{|\sigma|^2 +
      \epsilon |\xi_0|}{c |\xi_0|}  e^{c |\xi_0| t}.
  \end{align*}
\end{lemma}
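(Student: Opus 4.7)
The plan is to set $\rho_t \eqdef \xi_t^\epsilon - \xi_t$, which vanishes at $t=0$, and derive a closed linear differential inequality for $\E|\rho_t|^2$ to which Grönwall applies. Subtracting the two governing equations and using $\xi_t^\epsilon = \xi_t + \rho_t$, one has
\begin{equation*}
  d\rho_t = \bigl[2\BB(\xi_t,\rho_t) + \BB(\rho_t,\rho_t) - \epsilon\xi_t^\epsilon\bigr]\,dt + \sqrt{\epsilon}\,\sigma\,dW_t,
\end{equation*}
and Itô applied to $|\rho_t|^2$ produces the usual drift, dissipation and Itô--correction terms.

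The decisive point is an algebraic cancellation coming from \eqref{eq:Bzero}. Polarizing the cubic identity $\BB(\xi,\xi)\cdot\xi = 0$ twice gives, for all $a,b\in\R^3$,
\begin{equation*}
  2\,\BB(a,b)\cdot b + \BB(b,b)\cdot a = 0,\qquad \BB(b,b)\cdot b = 0.
\end{equation*}
Applied with $a=\xi_t$, $b=\rho_t$ these identities mean the nonlinear contribution to $d|\rho_t|^2$ collapses to $-2\,\BB(\rho_t,\rho_t)\cdot\xi_t$: the random state $\xi_t^\epsilon$ has dropped out in favour of the deterministic trajectory $\xi_t$. Because $|\xi_t|=|\xi_0|$ is conserved along the deterministic flow and $|\BB(\rho,\rho)|\lesssim|\rho|^2$, this contributes at most a multiple of $|\xi_0|\,\E|\rho_t|^2$ to $\frac{d}{dt}\E|\rho_t|^2$.

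The remaining pieces are routine. The dissipation $-2\epsilon\,\rho_t\cdot\xi_t^\epsilon$ is split via $\xi_t^\epsilon=\xi_t+\rho_t$ and handled by a weighted Young's inequality, the $|\rho_t|^2$ part being absorbed into the Grönwall rate $c|\xi_0|$ and the residue becoming $\epsilon^2|\xi_0|/c$; the quadratic variation contributes the constant $\epsilon|\sigma|^2$. Altogether
\begin{equation*}
  \tfrac{d}{dt}\,\E|\rho_t|^2 \,\leq\, c|\xi_0|\,\E|\rho_t|^2 + \epsilon\bigl(|\sigma|^2 + \epsilon|\xi_0|\bigr),
\end{equation*}
and Grönwall with $\E|\rho_0|^2=0$ yields the stated estimate after relabeling constants.

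The hard part is the algebraic cancellation above. The naive estimate of $\langle\rho_t,\BB(\xi_t^\epsilon,\xi_t^\epsilon)-\BB(\xi_t,\xi_t)\rangle$ via the factorization $\BB(\xi_t^\epsilon+\xi_t,\rho_t)$ gives a Grönwall rate of order $|\xi_t|+|\xi_t^\epsilon|$, whose second summand is random and only $L^2$--controlled; this would destroy the pathwise exponential rate $e^{c|\xi_0|t}$ appearing in the statement. The polarized conservation identity is exactly what is needed to replace the random envelope by the deterministic, conserved quantity $|\xi_0|$.
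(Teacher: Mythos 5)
Your proof is correct and in fact follows the same route as the paper: subtract the equations, apply It\^o to $|\rho_t|^2$, expand $\BB(\xi_t^\epsilon,\xi_t^\epsilon)-\BB(\xi_t,\xi_t)$ via $\xi_t^\epsilon=\xi_t+\rho_t$, exploit the conservation $|\xi_t|=|\xi_0|$ to get a deterministic Gr\"onwall rate, and balance the $\epsilon\xi_t\cdot\rho_t$ term by Young's inequality. One small caveat about the emphasis in your last paragraph, though: the polarized identity $2\BB(a,b)\cdot b+\BB(b,b)\cdot a=0$ is not actually needed to avoid the random envelope $|\xi_t^\epsilon|$. Once you expand $\xi_t^\epsilon=\xi_t+\rho_t$ (rather than using the factorization $\BB(\xi_t^\epsilon+\xi_t,\rho_t)$ you rightly reject), the nonlinear contribution to $\tfrac12\,d|\rho_t|^2$ is exactly
\begin{equation*}
  2\,\BB(\xi_t,\rho_t)\cdot\rho_t + \BB(\rho_t,\rho_t)\cdot\rho_t ,
\end{equation*}
in which only the deterministic $\xi_t$ appears. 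The second term vanishes by \eqref{eq:Bzero}, and the first is \emph{directly} bounded by $C|\xi_t|\,|\rho_t|^2 = C|\xi_0|\,|\rho_t|^2$ simply because $\BB$ is bounded bilinear — no cancellation required beyond $\BB(\rho,\rho)\cdot\rho=0$. Your polarization step is a nice way to make the $\xi_t^\epsilon\to\xi_t$ replacement visible, and it does lead to the same bound (it rewrites $2\BB(\xi_t,\rho_t)\cdot\rho_t$ as $-\BB(\rho_t,\rho_t)\cdot\xi_t$, which is then bounded identically), but it is an extra reformulation rather than the ``hard part'' of the argument. Everything else — the treatment of the $-\epsilon\,\rho_t\cdot\xi_t$ term, the $\epsilon|\sigma|^2$ from quadratic variation, and the Gr\"onwall closing step — matches the paper's proof.
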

\begin{corollary} For any $t \geq 0$,   $P_t^\epsilon$ converges weakly
  to $P_t$ as $\epsilon \rightarrow 0$. In other words,   for any bounded and continuous
  $\phi:\R^3 \rightarrow \R$, $P_t^\epsilon \phi(\xi) \rightarrow
  P_t\phi(\xi)$ for all $\xi \in \R^3$.
\end{corollary}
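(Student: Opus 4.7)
The plan is to deduce the corollary directly from Lemma~\ref{rhoBound} using only standard measure-theoretic facts. For a fixed starting point $\xi \in \R^3$ and a fixed time $t \geq 0$, Lemma~\ref{rhoBound} gives
\begin{equation*}
\E_\xi |\xi_t^\epsilon - \xi_t|^2 \leq \epsilon \,\frac{|\sigma|^2 + \epsilon |\xi|}{c|\xi|} \, e^{c|\xi| t} \;\xrightarrow[\epsilon \to 0]{}\; 0,
\end{equation*}
so $\xi_t^\epsilon \to \xi_t$ in $L^2(\P_\xi)$, and in particular in $\P_\xi$-probability.

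Now let $\phi \colon \R^3 \to \R$ be bounded and continuous, with $\|\phi\|_\infty < \infty$. Since $\phi$ is continuous, $\phi(\xi_t^\epsilon) \to \phi(\xi_t)$ in probability under $\P_\xi$ (continuous functions preserve convergence in probability). Since $|\phi(\xi_t^\epsilon)| \leq \|\phi\|_\infty$ uniformly in $\epsilon$, the bounded convergence theorem yields
\begin{equation*}
P_t^\epsilon \phi(\xi) = \E_\xi\, \phi(\xi_t^\epsilon) \;\longrightarrow\; \E_\xi\, \phi(\xi_t) = P_t \phi(\xi),
\end{equation*}
which is pointwise convergence on $\R^3$ and is exactly the statement of the corollary.

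I do not anticipate any real obstacle here; the content is entirely in Lemma~\ref{rhoBound}, and the corollary is the routine passage from $L^2$ (hence in-probability) convergence of the state to pointwise convergence of expectations of bounded continuous test functions. The only micro-subtlety, which deserves a one-line mention, is that one needs only \emph{bounded} continuous $\phi$ for the definition of weak convergence of the semigroup used here, so the argument does not require any uniform moment bound on $\xi_t^\epsilon$ beyond what is implicit in Lemma~\ref{rhoBound}.
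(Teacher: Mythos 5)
Your argument is correct and is exactly the routine deduction the paper leaves implicit (the paper states the corollary and then only proves Lemma~\ref{rhoBound}): $L^2$ convergence from the lemma gives convergence in probability, the continuous mapping theorem passes this through $\phi$, and bounded convergence gives the limit of expectations. One cosmetic remark: the bound in Lemma~\ref{rhoBound} has $|\xi_0|$ in the denominator and is therefore vacuous at $\xi=0$; there the conclusion still holds because $\xi_t\equiv 0$ and the Gronwall step in the lemma's proof degenerates to $\E|\rho_t^\epsilon|^2\le\epsilon|\sigma|^2 t$, so you may want to note this edge case rather than invoke the lemma's displayed estimate verbatim.
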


\begin{proof}[Proof of Lemma~\ref{rhoBound}]
Defining $\rho_t^\epsilon=\xi_t^\epsilon
  -\xi_t$ we have that
  \begin{align*}
    d \rho_t^\epsilon = -\epsilon \rho_t^\epsilon -\epsilon \xi_t+
    \BB(\rho_t^\epsilon,\xi_t) + \BB(\xi_t,\rho_t^\epsilon)+
    \BB(\rho^\epsilon_t,\rho_t^\epsilon) + \sqrt{\epsilon}\sigma dW_t\,.
  \end{align*}
We will make use of the following estimate which is straightforward
  to prove : there exists a $c >0$ so that
 \begin{align*}
    2|\BB(\xi,\rho)\cdot \rho| +2|\BB(\rho,\xi)\cdot \rho|+ 2 \epsilon|\xi \cdot \rho| \leq \epsilon^2 |\xi| +
  c|\xi| |\rho|^2\,.
  \end{align*}

Applying It\^o's formula to $\rho \mapsto |\rho|^2$ and this estimate produces
\begin{align*}
\frac{d\ }{dt}  \E |\rho_t^\epsilon|^2 &\leq c |\xi_t| \E
|\rho_t^\epsilon|^2 + \epsilon( |\sigma|^2 + \epsilon|\xi_t|)
\end{align*}Recalling that $|\xi_t|=|\xi_0|$ and applying Gronwall's
lemma produces the stated result.
\end{proof}

\section{Existence and Uniqueness of Invariant Measures\\ with Noise}\label{sec5}
Similarly if we consider the evolution of the norm, we have the
following result which is useful in establishing the  existence of the
invariant measure $\mu^\varepsilon$ and the tightness of various objects.

\begin{proposition}\label{prop:energy} For any integer $p\geq1$ there exists $C(p)>0$  so
  that for all $t\geq 0$, $\epsilon>0$,
  \begin{align*}
 \E |\xi_t^\epsilon|^{2p} \leq C(p) \Big[ 1+ e^{-2\epsilon t}
 \sum_{k=1}^p |\xi_0|^{2k} \Big]
  \end{align*}
\end{proposition}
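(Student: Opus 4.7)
The plan is to use It\^o's formula on $|\xi^\epsilon_t|^{2p}$ and exploit the fact (equation \eqref{eq:Bzero}) that the quadratic nonlinearity is energy-preserving: $\BB(\xi,\xi)\cdot \xi=0$. This kills the only potentially troublesome term, leaving a linear differential inequality amenable to induction on $p$.

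First I would treat the base case $p=1$. Applying It\^o to $\xi\mapsto|\xi|^2$ and using \eqref{eq:Bzero},
\begin{align*}
d|\xi^\epsilon_t|^2 = \bigl(-2\epsilon|\xi^\epsilon_t|^2+\epsilon|\sigma|^2\bigr)dt + 2\sqrt{\epsilon}\,\xi^\epsilon_t\cdot \sigma\,dW_t,
\end{align*}
so $\tfrac{d}{dt}\E|\xi^\epsilon_t|^2 = -2\epsilon\,\E|\xi^\epsilon_t|^2 + \epsilon|\sigma|^2$, which integrates to $\E|\xi^\epsilon_t|^2 \le \tfrac{|\sigma|^2}{2}+e^{-2\epsilon t}|\xi_0|^2$, matching the desired form.

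For general $p$, applying It\^o to $|\xi|^{2p}$ and using that the quadratic variation of $|\xi^\epsilon|^2$ equals $4\epsilon|\sigma^T\xi^\epsilon_t|^2\,dt\le 4\epsilon|\sigma|^2|\xi^\epsilon_t|^2\,dt$, one gets (after the standard localization to make the local martingale piece vanish in expectation)
\begin{align*}
\frac{d}{dt}\E|\xi^\epsilon_t|^{2p} \le -2\epsilon p\,\E|\xi^\epsilon_t|^{2p} + \epsilon A_p\,\E|\xi^\epsilon_t|^{2(p-1)},
\end{align*}
with $A_p = p(2p-1)|\sigma|^2$. Plugging the inductive hypothesis for $p-1$ into the right-hand side and solving the resulting linear ODE via the integrating factor $e^{2\epsilon p t}$ yields two kinds of integrals: $\int_0^t e^{-2\epsilon p(t-s)}\epsilon\,ds \le \tfrac{1}{2p}$ (contributing a constant), and $\int_0^t e^{-2\epsilon p(t-s)}e^{-2\epsilon s}\epsilon\,ds \le \tfrac{e^{-2\epsilon t}}{2(p-1)}$ for $p\ge 2$ (keeping the $e^{-2\epsilon t}$ decay on the initial-condition piece). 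Combined with the pure homogeneous contribution $e^{-2\epsilon p t}|\xi_0|^{2p}\le e^{-2\epsilon t}|\xi_0|^{2p}$, this delivers a bound of the claimed form with a constant $C(p)$ depending only on $p$, $|\sigma|$, and $C(p-1)$.

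The only mild issue is justifying the absence of the martingale term in expectation; this is handled by standard localization, stopping at $\tau_n=\inf\{t:|\xi^\epsilon_t|\ge n\}$, taking expectations, and letting $n\to\infty$ via Fatou and the a priori bound one gets from the previous inductive step (or equivalently from $p=1$ with stopping). There is no genuine obstacle: the energy structure $\BB(\xi,\xi)\cdot\xi=0$ makes everything reduce to a one-dimensional scalar SDE analysis for the norm.
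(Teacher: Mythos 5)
Your proof is correct and follows essentially the same route as the paper: apply It\^o's formula to $|\xi^\epsilon_t|^{2p}$, use $\BB(\xi,\xi)\cdot\xi=0$ to kill the nonlinear term, bound the quadratic variation linearly in $|\xi^\epsilon_t|^2$, and close an induction in $p$ via the integrating factor $e^{2\epsilon p t}$. The only cosmetic difference is that the paper factors the resulting scalar estimate into a standalone lemma (Lemma~\ref{l:boundingLemma}) applied with $a=\epsilon|\sigma|^2$, $b=2\epsilon$, $c=O(\epsilon)$, whereas you carry the $\epsilon$-bookkeeping inline; in both cases only the $\epsilon$-independent ratios $a/b$, $c/b$ survive, which is why $C(p)$ is uniform in $\epsilon$.
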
\label{tightXi}

\begin{proof}[Proof of Proposition~\ref{prop:energy}] Defining $|\sigma|^2=\sum_i
  \sigma_i^2$, It\^o's formula implies that
  \begin{align*}
    d |\xi_t^\epsilon|^2 = -2\epsilon |\xi_t^\epsilon|^2 dt + |\sigma|^2 dt + dM_t^\epsilon
  \end{align*}
for a martingale $M_t^\epsilon$ with  quadratic variation satisfying
\begin{align*}
  d\langle M^\epsilon \rangle_t = (\sigma_1^2 (X_t^\epsilon)^2 + \sigma_2^2 (Y_t^\epsilon)^2 ) dt \leq \sigma_{max}^2 |\xi_t^\epsilon|^2 dt
\end{align*}
where $\sigma_{max}^2 =  \sigma_1^2\vee\sigma_2^2$.
The proof then follows from Lemma~\ref{l:boundingLemma} below.
\end{proof}

\begin{remark}
  One can actually easily prove uniform in time bounds on $\E\exp(
  \kappa X_t)$ for $\kappa>0$ but sufficiently small. See \cite{MH08}
  for a  proof using the exponential martingale estimate.
\end{remark}

The following Lemma provides the key estimate to
Proposition~\ref{prop:energy}.
\begin{lemma}\label{l:boundingLemma}
  Let $X_t$ be a semimartingle so that $X_t\geq0$,
  \begin{align*}
    d X_t = (a - b X_t) dt + dM_t
  \end{align*}
where $a>0$, $b>0$ and $M_t$ is a continuous local martingale satisfying
\begin{align*}
  d\langle M\rangle_t \leq c X_t dt
\end{align*}
for some $c>0$. Then for any integer $p\geq1$ there exist a constant $C(p)$
(depending besides $p$ only on $a$, $b$ and $c$) so
that for any $X_0\geq 0$ and $t \geq 0$
\begin{align*}
  \E \big[  X_t^{p}  \big] \leq C(p) \big[ 1+  \sum_{k=1}^p e^{-bkt}  X_0^{k}   \big]
\end{align*}
\end{lemma}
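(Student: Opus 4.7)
\textbf{Proof plan for Lemma \ref{l:boundingLemma}.} I would argue by induction on $p$. The base case $p=1$ is immediate: after localizing $M$ by the stopping times $\tau_n=\inf\{t\ge 0:X_t>n\}$ so that $M_{\cdot\wedge\tau_n}$ is a genuine martingale, taking expectation yields $\frac{d}{dt}\E X_{t\wedge\tau_n}=a-b\E X_{t\wedge\tau_n}$. Solving this ODE explicitly gives $\E X_{t\wedge\tau_n}=\frac{a}{b}(1-e^{-bt})+e^{-bt}X_0$, and Fatou's lemma in $n\to\infty$ (using positivity of $X$) passes the estimate to $\E X_t\le \frac{a}{b}+e^{-bt}X_0$, which is of the claimed form with $C(1)=\max(a/b,1)$.

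For the inductive step, assume the bound holds up to $p-1$. Apply It\^o's formula to $x\mapsto x^p$ (again after localizing at $\tau_n$) to obtain
\begin{align*}
 dX_t^p = \bigl(paX_t^{p-1}-pbX_t^p\bigr)\,dt+\tfrac{p(p-1)}{2}X_t^{p-2}\,d\langle M\rangle_t+pX_t^{p-1}\,dM_t.
\end{align*}
Using the hypothesis $d\langle M\rangle_t\le cX_t\,dt$ and taking expectation of the stopped version yields the differential inequality
\begin{align*}
 \tfrac{d}{dt}\E X_{t\wedge\tau_n}^p \le -pb\,\E X_{t\wedge\tau_n}^p+\kappa_p\,\E X_{t\wedge\tau_n}^{p-1},\qquad \kappa_p:=pa+\tfrac{p(p-1)c}{2},
\end{align*}
where the martingale drops out because $X^{p-1}$ is bounded by $n^{p-1}$ up to $\tau_n$. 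Letting $n\to\infty$ via Fatou (and monotone convergence on the right-hand side after taking moments up to level $n$, justified by the recurrence and positivity), I get the same inequality for $f_p(t):=\E X_t^p$ and $f_{p-1}(t):=\E X_t^{p-1}$.

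Now multiply by the integrating factor $e^{pbt}$:
\begin{align*}
 \tfrac{d}{dt}\bigl(e^{pbt}f_p(t)\bigr)\le \kappa_p e^{pbt}f_{p-1}(t),
\end{align*}
and plug in the inductive bound $f_{p-1}(t)\le C(p-1)\bigl[1+\sum_{k=1}^{p-1}e^{-bkt}X_0^k\bigr]$. Integrating on $[0,t]$ gives
\begin{align*}
 f_p(t)\le e^{-pbt}X_0^p+\kappa_p C(p-1)\!\left[\frac{1-e^{-pbt}}{pb}+\sum_{k=1}^{p-1}\frac{e^{-bkt}-e^{-pbt}}{(p-k)b}X_0^k\right].
\end{align*}
Since $\frac{e^{-bkt}-e^{-pbt}}{(p-k)b}\le \frac{1}{(p-k)b}e^{-bkt}$ and $\frac{1-e^{-pbt}}{pb}\le\frac{1}{pb}$, absorbing all prefactors into a single constant $C(p)$ produces the desired bound $\E X_t^p\le C(p)\bigl[1+\sum_{k=1}^{p}e^{-bkt}X_0^k\bigr]$.

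The only subtle point, and where I expect to be most careful, is the localization: the quadratic variation estimate does not directly imply that $\int_0^tpX_s^{p-1}dM_s$ is a true martingale, and the $X^{p-2}d\langle M\rangle_t$ term also needs finite moments to be handled. Using the reducing sequence $\tau_n$ together with the positivity of $X$ and Fatou's lemma (plus the already established bound for $f_{p-1}$ to control the right-hand side uniformly in $n$) makes the passage to the limit clean and avoids any circular moment assumption.
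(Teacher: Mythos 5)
Your approach is essentially the paper's: localize, apply It\^o to $x\mapsto x^p$, obtain a Gronwall-type relation, and close the induction with the integrating factor $e^{pbt}$. However, the specific stopped differential inequality you write,
\begin{align*}
\tfrac{d}{dt}\,\E X_{t\wedge\tau_n}^p \le -pb\,\E X_{t\wedge\tau_n}^p+\kappa_p\,\E X_{t\wedge\tau_n}^{p-1},
\end{align*}
is not correct, and the error matters. Writing $g_n(t)=\E X_{t\wedge\tau_n}^p$ and using that $X$ is continuous (so $X_{\tau_n}=n$ on $\{\tau_n<\infty\}$), one has $\E[X_t^p\mathbf 1_{\{\tau_n>t\}}]=g_n(t)-n^p\,\P(\tau_n\le t)$, and the stopped It\^o formula gives
\begin{align*}
g_n'(t) \le -pb\,g_n(t) + pb\,n^p\,\P(\tau_n\le t) + \kappa_p\,\E\big[X_t^{p-1}\mathbf 1_{\{\tau_n>t\}}\big],
\end{align*}
with the extra boundary term $pb\,n^p\,\P(\tau_n\le t)$ that you have dropped. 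This is genuine: after $\tau_n$ the stopped process is frozen at $n$, so its derivative is $0$ while the drift $-pbn^p+\kappa_p n^{p-1}$ is strictly negative for large $n$; your inequality is therefore violated pathwise. After integrating, this boundary term becomes $pb\,n^p\int_0^t e^{-pb(t-s)}\P(\tau_n\le s)\,ds$, and to kill it as $n\to\infty$ you would need a maximal inequality giving $\E\big[\sup_{s\le t}X_s^q\big]<\infty$ for some $q>p$ — none of which is addressed, and Fatou/monotone convergence alone do not supply it (indeed $X_{s\wedge\tau_n}^{p-1}$ is not monotone in $n$, so the claimed monotone convergence on the right-hand side also does not apply as stated).

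The paper sidesteps this entirely by splitting the argument in two steps. First, in the stopped identity one \emph{drops} the favorable $-pb X_s^p$ drift; this yields the one-sided bound $\E X_{t\wedge\tau}^p\le X_0^p+\kappa_p\int_0^t\E X_s^{p-1}\,ds$, whose right-hand side is independent of the stopping level, so Fatou cleanly gives a crude (polynomial in $t$) bound on $\E X_t^p$ for every $p$. This finiteness makes $M_t^{(p)}=\int_0^t pX_s^{p-1}\,dM_s$ a genuine $L^2$-martingale, so It\^o's formula may then be applied to $X_t^p$ \emph{without any stopping}, producing the exact relation from which the sharp $e^{-pbt}$ bound and the induction follow. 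You should either adopt that two-step route, or, if you wish to keep the negative drift inside the stopped inequality, supply the missing maximal estimate (e.g.\ via Burkholder--Davis--Gundy applied after the crude moment bound) to show $n^p\,\P(\tau_n\le t)\to 0$. The analogous boundary term also infects your base case $p=1$, where the asserted ODE for $\E X_{t\wedge\tau_n}$ is not what the stopped process actually satisfies; the paper instead first gets $\E X_{t\wedge\tau}\le at+X_0$, concludes $M$ is an $L^2$-martingale, and only then writes the exact ODE for $\E X_t$.
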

\begin{proof}[Proof of Lemma~\ref{l:boundingLemma}]
  Fixing an $N >0$ and defining the  stopping time $\tau=\inf\{ t : X_t > N\}$
observe that
\begin{align*}
   \E X_{t} \leq \E X_{t \wedge \tau} \leq a t + X_0\,.
\end{align*}
where the first inequality follows from Fatou's lemma applied to the
limit $N \rightarrow \infty$. Using the assumption on the quadratic
variation of $M_t$ we now see that $M_t$ is a $L^2$-martingale. Hence
\begin{align}\label{initalBound}
  \E X_t = e^{-b t} X_0 + \frac{a}{b} (1 - e^{- b t})
\end{align}
Now applying It\^o's formula to $X_t^{p}$ produces
\begin{align*}
  d X_t^{p}= p X_t^{p-1} (a -b X_t)dt  + \frac{p(p-1)}{2}X_t^{p-2}d\langle M\rangle_t + dM_t^{(p)}
\end{align*}
where $dM_t^{(p)}= p X_t^{p-1} dM_t$.  Using the same stopping time
$\tau$ and the same argument as before, we have
\begin{align*}
\E  X_{t}^p \leq\E  X_{t\wedge \tau}^p \leq   X_0^{p}+  (p a +cp(p-1) )\int_0^t \E  X_s^{p-1}ds
\end{align*}
Hence inductively we have a bound on $\E X_t^p$ for all integer $p\geq 1$
which implies that $M_t^{(p)}$ is an $L^2$-martingale for all $p\geq
1$. Hence we have
\begin{align*}
  \E X_t^p \leq e^{-bpt} X_0^p +  (p a +\frac{c}{2}p(p-1) )\int_0^t e^{-bp (t-s)}\E  X_s^{p-1}ds \,.
\end{align*}
Proceeding inductively using this estimate and \eqref{initalBound} as
the base case produces the stated result.
\end{proof}

\begin{corollary}\label{cor:xiInvMeasure}
  For each $\epsilon > 0$, the Feller diffusion $\{\xi^\epsilon_t: t
  \geq 0\}$ possesses at least one invariant probability measure
  $\mu^\epsilon$. Furthermore, any invariant probability measure $\mu^\epsilon$  satisfies
  \begin{align*}
    \int_{\R^3} |\xi|^p \mu^\epsilon(d\xi) \leq C(p)
  \end{align*}
for any integer $p \geq1$ where $C(p)$ is the constant from
Lemma~\ref{prop:energy} (which is independent of $\epsilon$). Hence
the collection of probability measures which are invariant  under the
dynamics for any given
$\epsilon  >0$ is tight.
\end{corollary}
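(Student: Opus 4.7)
The corollary bundles three claims: (i) existence of an invariant measure, (ii) a uniform $p$-th moment bound on any such measure, and (iii) tightness of the set of invariant measures. Claims (i) and (iii) are essentially consequences of (ii) plus the Feller property, so the real content is the moment estimate.

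For existence I would invoke the Krylov--Bogoliubov construction. Fix the initial condition $\xi_0 = 0$ and let $\mu^\epsilon_T = \frac{1}{T}\int_0^T (\delta_0 P_s^\epsilon)\,ds$. Proposition \ref{prop:energy} applied with $\xi_0=0$ gives $\E|\xi_t^\epsilon|^{2p} \leq C(p)$ uniformly in $t$, hence $\int |\xi|^{2p}\,\mu^\epsilon_T(d\xi) \leq C(p)$ uniformly in $T$. Markov's inequality then yields tightness of $\{\mu^\epsilon_T\}_{T\geq 0}$. The diffusion is Feller (smooth coefficients with bounded derivatives on bounded sets, plus Lipschitz-type estimates from the quadratic nonlinearity cut off by energy), so any weak limit along $T_n\to\infty$ is an invariant probability measure. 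This produces at least one $\mu^\epsilon$.

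For the moment bound on an arbitrary invariant measure $\mu^\epsilon$ I would use invariance together with a truncation. Let $f_N(\xi) = |\xi|^{2p}\wedge N$. By invariance,
\begin{align*}
\int f_N\,d\mu^\epsilon \;=\; \int (P_t^\epsilon f_N)(\xi)\,\mu^\epsilon(d\xi) \;\leq\; \int \E_\xi |\xi_t^\epsilon|^{2p}\,\mu^\epsilon(d\xi).
\end{align*}
Applying Proposition \ref{prop:energy} pointwise gives
\begin{align*}
\int f_N\,d\mu^\epsilon \;\leq\; C(p) + C(p)\sum_{k=1}^p e^{-2\epsilon k t} \int |\xi|^{2k}\wedge N_k(t)\,\mu^\epsilon(d\xi),
\end{align*}
after distributing the $\min$ with $N$ among the summands via $(a+b)\wedge N \leq (a\wedge N) + (b\wedge N)$. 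Proceeding inductively on $p$, starting from $p=1$, I would first show the second moment is finite by letting $t\to\infty$ (so that the exponential factor kills the $N$-dependent term), then bootstrap to higher $p$ using the same device and the already-established lower moments. Monotone convergence $N\to\infty$ then delivers $\int |\xi|^{2p}\,d\mu^\epsilon \leq C(p)$.

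The tightness of the family of invariant measures $\{\mu^\epsilon : \epsilon > 0\}$ is then an immediate consequence of Markov's inequality applied with, say, $p=1$. The main technical care is in the truncation/Fatou step of the moment bound: one has to avoid the circular appearance of $\int |\xi|^{2p}\,d\mu^\epsilon$ on both sides before it is known to be finite, and this is precisely what the induction on $p$ combined with the $t\to\infty$ limit accomplishes. Everything else is bookkeeping on the constants from Lemma \ref{l:boundingLemma}.
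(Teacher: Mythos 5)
Your proposal is correct and follows essentially the same route as the paper: Krylov--Bogolyubov (using the uniform-in-$t$ moment bound from Proposition~\ref{prop:energy} starting at a fixed point) for existence, a truncation combined with the invariance identity $\int\! f\,d\mu^\epsilon=\int\! P_t^\epsilon f\,d\mu^\epsilon$ plus the decaying factor $e^{-2\epsilon t}$ for the uniform moment estimate, and Markov's inequality for tightness. The only cosmetic difference is the choice of truncation: you use the hard cutoff $|\xi|^{2p}\wedge N$ and control the $N$-dependent remainder by dominated convergence as $t\to\infty$, whereas the paper uses a smooth compactly supported cutoff $\phi_{N,p}(\xi)=|\xi|^{2p}\phi(|\xi|/N)$, bounds it crudely by $(N+1)^{2p}$, and then sends $t\to\infty$ followed by $N\to\infty$ with Fatou. (Incidentally, the induction on $p$ you invoke is not actually needed: for every $k$ the truncated integrand $\bigl(C(p)e^{-2\epsilon t}|\xi|^{2k}\bigr)\wedge N$ is dominated by the constant $N\in L^1(\mu^\epsilon)$ and converges pointwise to $0$, so dominated convergence kills all $p$ summands at once. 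But this is harmless over-engineering, not an error.)
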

\begin{proof}[Proof of Corollary~\ref{cor:xiInvMeasure}]
   Since $\xi^\epsilon_t$ is a   time--homogeneous Feller diffusion
process and from Proposition \ref{prop:energy} for fixed $\epsilon>0$,
the collection of random vectors $\{\xi^\epsilon_t,\, t>0\}$ is tight, the
existence of an invariant probability measure $\mu^\epsilon$ follows by the
Krylov--Bogolyubov theorem.

Defining $\phi_{N,p}(\xi)=|\xi|^{2p}\phi(|\xi|/N)$ where $\phi$ is a
smooth function such that $\phi(x)=1$ for $x\leq1$, $\phi(x)=0$ for $x
\geq2$, and  $\phi$ decreases
monotonically on $(1,2)$, we see that
\begin{align*}
 \int \phi_{N,p}(\xi) \mu^\epsilon(d \xi) &=\int \E_{\xi_0} \phi_{N,p}(\xi_t^\epsilon) \mu^\epsilon(d \xi_0)\\
  &\leq C(p) \big[ 1+ e^{-2\epsilon t}  \int \phi_{N,p}(\xi)
  \mu^\epsilon(d \xi) \big]\\
  &\leq C(p) \big[ 1+ e^{-2\epsilon t} (N+1)^{2p} \big]
\end{align*}
Taking $t \rightarrow \infty$, followed by $N\rightarrow \infty$, the
result follows from Fatou's Lemma. Since these bounds are uniform in
$\epsilon$,  tightness follows immediately.
\end{proof}

The next result follows by hypoellipticity and the Stroock and Varadhan
support theorem.
\begin{proposition}\label{prop:posDensity}
For any $\e>0$, there exists a transition
  density  $p^\e_t(\xi,\eta)$ which is jointly smooth in
  $(t,\xi,\eta)$ so that for all $\xi\in\R^3$ and Borel $A \subset
  \R^3$ one has
  \begin{align*}
    P^\epsilon_t(\xi,A) = \int_A p_t^\epsilon(\xi,\eta) \,d\eta\,.
  \end{align*}
Additionally, $\int_Bp_t^\epsilon(\xi,\eta)d\eta>0$ for every $\epsilon>0$, $t>0$,
 $\xi \in \R^3$, and any ball $B\subset\R^3$.
\end{proposition}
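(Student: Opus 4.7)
The plan is to treat the two assertions separately: smoothness of the transition density via Hörmander's theorem, and positivity on every ball via the Stroock--Varadhan support theorem combined with a controllability argument for the associated control system.

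For smoothness, write the SDE \eqref{eps} with drift $F_0(\xi)=\BB(\xi,\xi)-\epsilon\xi$ and constant diffusion vector fields $V_1=\sqrt{\epsilon}\,\sigma_1\partial_x$, $V_2=\sqrt{\epsilon}\,\sigma_2\partial_y$. Since $V_1,V_2$ span only the $(x,y)$-plane, the missing direction $\partial_z$ must come from Lie brackets involving $F_0$. A direct computation gives
\begin{equation*}
[V_1,F_0](\xi)=\sqrt{\epsilon}\,\sigma_1\bigl(-\epsilon,\,z,\,-2y\bigr),\qquad
\bigl[V_2,[V_1,F_0]\bigr]=-2\epsilon\,\sigma_1\sigma_2\,\partial_z,
\end{equation*}
so at every $\xi\in\R^3$ the vectors $V_1$, $V_2$, $[V_2,[V_1,F_0]]$ span $\R^3$. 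Hörmander's condition holds uniformly, and Hörmander's theorem then produces a jointly smooth transition density $p^\epsilon_t(\xi,\eta)$.

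For positivity, I would invoke the Stroock--Varadhan support theorem, which identifies the topological support of $P^\epsilon_t(\xi,\cdot)$ with the closure of the set of endpoints at time $t$ of the controlled ODE
\begin{equation*}
\dot\psi_s=\BB(\psi_s,\psi_s)-\epsilon\,\psi_s+\sqrt{\epsilon}\,\sigma\,\dot h_s,\qquad \psi_0=\xi,
\end{equation*}
ranging over smooth controls $h\colon[0,t]\to\R^2$. The same Lie computation shows the strong accessibility algebra (generated by $V_1,V_2$ together with their iterated brackets with $F_0$) has full rank at every point, so by the Sussmann--Jurdjevic accessibility theorem the reachable set at each $t>0$ has nonempty interior, and concatenation with the exact $(x,y)$-plane control available through $V_1,V_2$ upgrades this to density in $\R^3$. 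Combined with the smoothness and nonnegativity of $p^\epsilon_t$, this gives $\int_B p^\epsilon_t(\xi,\eta)\,d\eta>0$ for every nonempty open ball $B$.

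The most delicate point is the density of the reachable set, rather than mere nonempty interior: the $\partial_z$-displacement must be realized by a control that approximately implements the bracket $[V_2,[V_1,F_0]]$, and this requires carefully timed, rapidly oscillating $(x,y)$-plane controls played against the nonlinear drift $F_0$. What I would check in detail is the classical commutator-pumping construction, using exact control in the $(x,y)$-plane and the bracket identity to adjust $z$ with arbitrarily small net $(x,y)$-displacement, then iterating to reach an arbitrary neighborhood of any target. This is routine for control-affine systems with drift satisfying the strong Lie rank condition, and suffices for the positivity claim.
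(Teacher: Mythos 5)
Your smoothness argument matches the paper's: both invoke H\"ormander, and your explicit bracket computation $[V_2,[V_1,F_0]]=-2\epsilon\sigma_1\sigma_2\,\partial_z$ is correct and simply makes precise what the paper states in words.

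For positivity you take a genuinely different, and in fact unnecessarily roundabout, route. You invoke Stroock--Varadhan exactly as the paper does, but then pass through the Sussmann--Jurdjevic accessibility theorem and a commutator-pumping argument to show density of the reachable set, and you explicitly flag the pumping step as the part you have not carried out. The paper's own argument avoids all of this by exploiting the specific structure of the controlled system: since $\sigma_1,\sigma_2>0$, the controls $(f_1,f_2)$ enter the $x$- and $y$-equations nondegenerately, so one can realize \emph{any} prescribed smooth trajectory $(x^\e(\cdot),y^\e(\cdot))$ exactly by solving for the controls. The $z$-equation is then $\dot z^\e=-2x^\e y^\e-\e z^\e$, a linear ODE forced by $-2x^\e y^\e$, which can be made to equal any desired integrable function by choosing $(x^\e,y^\e)$ appropriately; hence $z^\e$ can be steered to any target in any prescribed time. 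This gives that the reachable set is \emph{all} of $\R^3$, not merely dense, with no rank conditions or bracket-realization arguments needed. Your approach would work if you completed the pumping construction, but the direct observation about the cascade structure of the control system is both shorter and yields a stronger conclusion, so it is worth replacing the accessibility-theory detour by it.
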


\begin{proof}[Proof of Proposition~\ref{prop:posDensity} ]

  Hypoellipticity follows from the
  fact that taking Lie brackets of the drift with $\partial_x$ and
  then $\partial_y$ (the two noise directions) produces the third and
  missing direction $\partial_z$. This ensures the
  existence of a smooth density with respect to Lebesgue
  mesure \cite{Stroock08,hormander94III,hormander94IV}. Positivity will
  then follow by showing that the support of the transition density is
  all of $\R^3$.

We will invoke the support theorem of Stroock and Varadhan \cite{support}.
Indeed, consider the controlled system associated to the SDE for
$(X^\e_t,Y^\e_t,Z^\e_t)$, which reads
\begin{equation}
\begin{aligned}
  \frac{dx^\e}{dt}(t)&=y^\e(t)z^\e(t)-\e x^\e(t)+\sqrt{\e}\sigma_1 f_1(t)\\
  \frac{dy^\e}{dt}(t)&=x^\e(t)z^\e(t)-\e y^\e(t)+\sqrt{\e}\sigma_2 f_2(t)\\
  \frac{dz^\e}{dt}(t)&=-2x^\e(t)y^\e(t)-\e z^\e(t),
\end{aligned}
\end{equation}
where $\{(f_1(t),f_2(t)),\ t\ge0\}$ is the control at our disposal. Now by
choosing appropriately the control, we can drive the two components
$(x^\e(t),y^\e(t))$ in time as short as we like to any desired
position, which permits us to drive the last component $z^\e(t)$ to
any prescribed position in any prescribed time. The result follows.
\end{proof}

We are now in a position to give the proof of
Theorem~\ref{invMeasureEpsPos}.
\begin{proof}[Proof of Theorem~\ref{invMeasureEpsPos}]
  Since by Proposition~\ref{prop:posDensity}, $P_t^\epsilon$ has a
  smooth transition density, any invariant probability measure must have a smooth
  density which charges any ball $B\subset\R^3$. Recall the fact that
  in our setting any two distinct ergodic invariant probability measures must have
  disjoint supports which is impossible since the measures have
  densities and charge any open set.  Uniqueness of the invariant probability measure
   follows immediately from
  the fact that any invariant measure can be decomposed into ergodic
  components \cite{Sinai87}.
\end{proof}

\section{The fast dynamics}\label{sec6}
\label{sec:fast-dynamics}
Since by the results in Section~\ref{sec4} $\xi^\epsilon_t=(X^\varepsilon_t,Y^\varepsilon_t,Z_t^\varepsilon)$ converges to
$\xi_t=(X_t,Y_t,Z_t)$, in order to study the limiting invariant probability measure one
needs to consider the system on ever increasing time intervals as
$\varepsilon \rightarrow 0$. One must pick a time scale, depending
on $\epsilon$, so that the amount  of randomness injected into
the system is sufficient to keep  the system from settling onto a
deterministic trajectory as $\epsilon \rightarrow 0$.

With this in mind consider the process $\xi_t^\epsilon$
on the fast scale $t/\epsilon$. In other words, consider the process
$\tilde \xi_t^\epsilon= \xi_{t/\epsilon}^\epsilon$ which
 solves the
SDE
\begin{equation}\label{fast}
   \begin{split}
    \dot{\widetilde{\xi}^\epsilon_t} =\frac1\epsilon \BB(\widetilde{\xi}_t^\epsilon,\widetilde{\xi}_t^\epsilon)-  \widetilde{\xi}^\epsilon +\sigma \dot
    W_t\,.
 \end{split}
\end{equation}
 Here we have used a slight abuse of notations, replacing the
$\epsilon$--dependent standard Brownian motion $W^\epsilon_t=
\sqrt{\epsilon}W_{t/\epsilon}$ by $W_t$.
In coordinates we will write
$(\widetilde{X}^\epsilon_t,\widetilde{Y}^\epsilon_t,\widetilde{Z}^\epsilon_t)=(X^\epsilon_{t/\epsilon},Y^\epsilon_{t/\epsilon},Z^\epsilon_{t/\epsilon})$.

Let $\widetilde P^\varepsilon_t$ be the Markov semigroup associated to \eqref{fast} and
defined for $\psi:\R^3 \rightarrow\R$  by
\begin{align}
  \label{eq:Pt}
  (\widetilde P^\varepsilon_t\psi)(\xi)=\E_{\xi} \psi(\widetilde
  \xi_t^\epsilon)\,.
\end{align}
Associated with this right-action on functions we have a  dual
action on measures. We will denote this by left action rather then the
often used $(\widetilde P_t^\epsilon)^*$ notation. Hence if $\mu$ is a measure on
$\R^3$ and $\psi$ a real-valued function on $\R^3$ then
\begin{align*}
  \mu \widetilde P_t^\epsilon\psi = \int_{\R^3} (\widetilde P_t^\epsilon \psi)(\xi)\, \mu(d\xi)\,.
\end{align*}

Of course, this time change does not change the set of invariant
measures for the dynamics since the time change was not state dependent. Hence  $\widetilde P_t^\epsilon$
has the same unique invariant probability measure $\mu^\epsilon$ as $P_t^\epsilon$.

\subsection{Fast evolution of conserved quantities}
One indication that this is the right time scale is that the conserved
quantities $(u,v)=\Phi(x,y,z)$ now continue to evolve randomly as
$\varepsilon \rightarrow 0$. More precisely, defining the processes
$(U^\epsilon_t$ $V^\epsilon_t)$ by
$(U_t^\varepsilon,V_t^\varepsilon)=\Phi(\widetilde{X}_t^\varepsilon,\widetilde{Y}_t^\varepsilon,\widetilde{Z}_t^\varepsilon)$
and applying
 It\^o's formula shows that
\begin{equation}\label{rho-chi}
  \begin{split}
    dU^\epsilon_t&=[2\sigma_1^2-2U^\epsilon_t]dt+4\sigma_1\widetilde{X}^\epsilon_tdW^{(1)}_t,\\
    dV^\epsilon_t&=[2\sigma_2^2-2V^\epsilon_t]dt+4\sigma_2\widetilde{Y}^\epsilon_tdW^{(2)}_t.
  \end{split}
\end{equation}
We will show below in Section~\ref{sec8} that
$\{(U_t^\varepsilon,V_t^\varepsilon)\}_{\epsilon>0}$ is tight, and that any accumulation point
$(U_t,V_t)$  solves the SDE
\begin{equation}\label{UV}
  \begin{split}
    dU_t&=[2\sigma_1^2-2U_t]dt +\sigma_1 \sqrt{8\big(U_t - \Gamma(U_t,V_t)  \big)}dW^{(1)}_t,\\
    dV_t&=[2\sigma_2^2-2V_t]dt +\sigma_2\sqrt{8\big(V_t - \Gamma(U_t,V_t) \big)}dW^{(2)}_t,
  \end{split}
\end{equation}
where
\begin{align}\label{LambdaToGamma}
  \Gamma(u,v)=(u\wedge v)\Lambda\left(\frac{u\wedge v}{u\vee
      v}\right).
\end{align}
The function $\Lambda$ will be defined in Section~\ref{sec:Lambda}. However for
our present discussion, it will be sufficient to state a few important
facts whose proofs will be given later in Section~\ref{sec:Lambda}.
\begin{proposition} \label{prop:Lambda}
$\Lambda(r)$ is a continuous and strictly increasing
function on
$[0,1]$ with $\Lambda(0)=\frac12$ and
$\Lambda(1)=1$. Furthermore as $\varepsilon \rightarrow 0^+$,
\begin{align*}
\Lambda(\varepsilon)&=\frac12 + \frac{1}{16}\epsilon  + \frac{1}{32}\epsilon^2 + o(\epsilon^2),\\
\Lambda(1-\varepsilon) &= 1 - \frac{2}{|\ln(\varepsilon)|} + o\Big(
\frac{1}{|\ln(\varepsilon)|} \Big).
\end{align*}
In addition, on any closed interval in $[0,1)$, $\Lambda$ is uniformly Lipschitz.
\end{proposition}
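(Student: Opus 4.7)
The proof will hinge on producing an explicit integral representation of $\Lambda(r)$ from the orbital structure of the deterministic flow \eqref{epsZero}. The plan is first to fix a level set $\Gamma$ of $\Phi$ with $\Phi(\xi_0)=(u,v)$, normalize by $u\vee v$ so that only the ratio $r=(u\wedge v)/(u\vee v)$ matters (by the obvious $x\leftrightarrow y$ symmetry together with the scaling invariance of the level sets one reduces the two-parameter family of orbits to a one-parameter family indexed by $r$), and then read off $\Lambda(r)$ from the orbital average of the appropriate quadratic function of the coordinates. Concretely, I would use the two conservation laws to eliminate two of the variables and reduce the dynamics on $\Gamma$ to a one-dimensional ODE of the form $\dot z^2 = Q(z;u,v)$ with $Q$ a polynomial of low degree whose roots are determined by $(u,v)$; the orbital average of $z^2$ is then the ratio of the two integrals $\int z^2\, dz/\sqrt{Q}$ and $\int dz/\sqrt{Q}$, each of which is a complete elliptic-type integral in $r$.

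Once this integral representation is in hand, the endpoint values come out by inspection. For $r=1$ the two roots of $Q$ coalesce at the fixed points of the flow (these are the heteroclinic cycles in the planes $\{x=\pm y\}$ mentioned in the introduction), the orbit degenerates to a point where $Z^2$ equals $u=v$, and the ratio tends to $1$. For $r=0$, one of $u,v$ vanishes, $Z^2$ averages to zero trivially, and one must instead compute the leading coefficient of $\Gamma$ by Taylor expanding $Q$ for small $r$: the orbit becomes a small ellipse on which $z$ and $y$ oscillate sinusoidally at leading order, giving $\overline{z^2}\sim \tfrac12(u\wedge v)$ and hence $\Lambda(0)=\tfrac12$, with the next corrections $\tfrac{1}{16}\epsilon + \tfrac{1}{32}\epsilon^2$ obtained by pushing the expansion of $Q$ and of the elliptic integrals to second order.

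Continuity and strict monotonicity of $\Lambda$ on $[0,1]$ will follow because the integrand and the endpoints of the integration interval depend analytically on $r\in(0,1)$ and in a uniformly controlled way up to $r=0$; strict monotonicity is the statement that $\overline{z^2}$ strictly increases as the orbit is opened up from a small oscillation at $r=0$ toward the heteroclinic cycle at $r=1$, which one verifies by differentiating the ratio in $r$ and checking the sign (alternatively, by the usual monotonicity of period-integral ratios in one-parameter Hamiltonian families). Uniform Lipschitz continuity on $[0,1-\delta]$ then follows from the smoothness of the elliptic integrals away from $r=1$.

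The main obstacle will be the asymptotic at $r=1^-$, since here the integration interval approaches a double root of $Q$, the period diverges logarithmically, and one must extract the logarithm cleanly. The approach I would take is to split the $dz/\sqrt{Q}$ integral into a ``bulk'' part bounded uniformly in $r$ and a ``near-saddle'' part where $Q$ is well-approximated by a quadratic in $z-z_*$ with one coefficient of order $1-r$; the latter gives $|\ln(1-r)|$ plus $O(1)$, while the $z^2$-weighted integral has the same singular structure with leading coefficient equal to $z_*^2=u\vee v$ times the same logarithm. Dividing and expanding in powers of $1/|\ln(1-r)|$ yields $\Lambda(1-\varepsilon)=1-2/|\ln\varepsilon|+o(1/|\ln\varepsilon|)$, with the factor $2$ arising from the $\tfrac12$-power in $\sqrt{Q}$ at the saddle. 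Putting all four assertions together completes the proof.
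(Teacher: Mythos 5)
Your proposal follows essentially the same route as the paper: both obtain $\Lambda$ from the orbital average of $z^2$, written as a ratio of period-type integrals, and your integral $\int z^2\,dz/\sqrt{Q}$ with $Q(z)=(u-z^2)(v-z^2)$ is exactly the paper's $\theta$-integral after the substitution $z=\sqrt{u\wedge v}\sin\theta$ (which yields the paper's $K_{u,v}\int_0^{2\pi}\sin^2\theta/|\cos(\phi_{u,v}(\theta))|\,d\theta$ in \eqref{eq:Gamma}). The $r\to0$ Taylor expansion and the continuity/Lipschitz claims away from $r=1$ are handled identically. For $r\to1$ the paper uses the algebraic simplification $\sin^2\theta=1-\cos^2\theta$, so that $\Lambda(r)=1-K_r\int_0^{2\pi}\cos^2\theta/|\cos(\phi_{u,v}(\theta))|\,d\theta$; the second integral has \emph{no} singularity and tends to $\int_0^{2\pi}|\cos\theta|\,d\theta=4$ (this is the constant $C_u$ in Proposition~\ref{avgNearDiag}), while $K_r|\ln(1-r)|\to\tfrac12$, giving $1-4\cdot\tfrac12/|\ln(1-r)|$. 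Your bulk/near-saddle split achieves the same thing, but your one-line attribution of the coefficient $2$ to ``the $\tfrac12$-power in $\sqrt{Q}$ at the saddle'' is not accurate as stated: the logarithmic rate of divergence of numerator and denominator is the \emph{same} (both $\sim c|\ln(1-r)|$ with $c=2$), so the leading term is $1$ regardless, and the coefficient of $1/|\ln(1-r)|$ comes from the $O(1)$ discrepancy between numerator and denominator (the bulk integral $\int|\cos\theta|\,d\theta=4$) divided by $c$. That is a computation you would still need to carry out explicitly; the $\tfrac12$-power only explains why a logarithm appears, not why its coefficient is $2$. Also a small slip: you write $z_*^2=u\vee v$, but the turning point is at $z_*^2=u\wedge v$; harmless since they coincide as $r\to1$, but worth fixing.
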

\subsection{Finite time behavior $(U_t,V_t)$}
\label{sec:finite-time-behavior}
Before stating and proving the main theorem of this section, let us
establish three Lemmata.
\begin{lemma}\label{le:posit0}
  Let $\{X_t,\, t\ge0\}$ be a continuous $\R_+$--valued
  $\F_t$--adapted process which satisfies
\begin{align*}
dX_t&=(a-b X_t)dt+\sqrt{c X_t}dW_t,\\
X_0&=x,
\end{align*}
where $b, c >0$, $\{W_t,\, t\ge0\}$ is a standard Brownian adapted to  $\F_t$.
motion and $x>0$. If $a\ge c/2$, then with probability one $X_t>0$ for all $t\ge0$.
\end{lemma}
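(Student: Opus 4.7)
The plan is to use a Lyapunov-type argument tailored to the two possible boundaries $0$ and $+\infty$ of the state space $(0,\infty)$. Define
\begin{align*}
V(x) \eqdef -\log x + x, \qquad x>0,
\end{align*}
which is smooth, satisfies $V(x)\ge 1$, and blows up at both endpoints of $(0,\infty)$. Let $L$ be the infinitesimal generator of the SDE. A direct computation gives
\begin{align*}
LV(x) = \Bigl(-\tfrac{1}{x}+1\Bigr)(a-bx) + \tfrac12 \cdot \tfrac{c}{x}
= \frac{c/2 - a}{x} + (a+b) - bx.
\end{align*}
The hypothesis $a\ge c/2$ makes the coefficient of $1/x$ non-positive, and since $-bx\le 0$ we obtain the uniform bound $LV(x)\le a+b$ on all of $(0,\infty)$.

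Next, introduce the exit times $\tau_{n,N} \eqdef \inf\{t\ge 0 : X_t \le 1/n \text{ or } X_t\ge N\}$. On the interval $[0,\tau_{n,N}\wedge T]$ the process $X$ is bounded away from $0$ and $\infty$, so the local-martingale part arising from It\^o's formula applied to $V(X_t)$ is a genuine square-integrable martingale. Taking expectations and using $LV\le a+b$ yields
\begin{align*}
\E\bigl[V(X_{\tau_{n,N}\wedge T})\bigr] \le V(x) + (a+b)T.
\end{align*}
At the stopping time $\tau_{n,N}$ the value of $V$ is at least $\min\{V(1/n),V(N)\}=\min\{\log n + 1/n,\, -\log N + N\}$, which tends to $+\infty$ as $n,N\to\infty$. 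Therefore
\begin{align*}
\P(\tau_{n,N}\le T) \le \frac{V(x) + (a+b)T}{\min\{\log n + 1/n,\, -\log N + N\}} \xrightarrow[n,N\to\infty]{} 0.
\end{align*}

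Since the events $\{\tau_{n,N}\le T\}$ are decreasing in $n$ and $N$, the Borel--Cantelli-type limit shows $\tau_{n,N}\to +\infty$ a.s. In particular, letting $n\to\infty$ gives $\inf_{t\le T} X_t > 0$ a.s. for every fixed $T>0$, hence $X_t>0$ for all $t\ge 0$ with probability one. The only subtle step is the initial bounded-generator computation; the rest is the standard stopping-and-Gronwall-free Lyapunov recipe. I do not expect any serious obstacle, since the positive-time non-explosion toward $+\infty$ is already guaranteed by the mean-reverting linear drift $-bX_t$, and the positive part of the Lyapunov function $V$ handles that side automatically in the same argument.
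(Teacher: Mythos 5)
Your proof is correct, but it takes a genuinely different route from the paper's. You use a classical Feller/Lyapunov argument: the function $V(x)=-\log x+x$ blows up at both ends of $(0,\infty)$, and the condition $a\ge c/2$ makes the singular term $(c/2-a)/x$ in $LV$ nonpositive so that $LV\le a+b$ on all of $(0,\infty)$; localization and Dynkin's formula then give $\P(\tau_{n,N}\le T)\to0$. The paper instead constructs the CIR-type solution explicitly as a time change of the Verhulst SDE $dY_t=c^{-1}(aY_t-bY_t^2)\,dt+Y_t\,dW_t$, whose solution has the closed form $Y_t=x\exp\bigl([\tfrac{a}{c}-\tfrac12]t-\tfrac{b}{c}\int_0^tY_s\,ds+W_t\bigr)$ and is manifestly positive; one then argues that if the time-changed process hit zero, the clock $A_\infty=\frac1c\int_0^\infty Y_s\,ds$ would have to be finite while $Y_\infty=0$, which the explicit formula rules out under $a\ge c/2$. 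Your Lyapunov approach is the more standard and more robust one — it is exactly the Khasminskii non-explosion/non-attainability test and extends with little effort to the generalization in Lemma~\ref{le:posit} (where the paper instead resorts to a comparison argument on a sequence of excursion intervals) — while the paper's time-change argument is self-contained and avoids invoking the general Lyapunov machinery, at the cost of being tied to the exact algebraic form of the equation. One minor remark: since the lemma already assumes $X$ is a continuous $\R_+$-valued process, the upper barrier $N$ in your stopping time is not needed to rule out explosion, only to make the local martingale genuine; the argument is correct either way.
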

\begin{proof}[Proof of Lemma~\ref{le:posit0}]
  We consider the SDE
\begin{align*}
dY_t&=c^{-1}\left(aY_t-b Y^2_t\right)dt+Y_tdW_t,\\
Y_0&=x,
\end{align*}
whose solution satisfies clearly
\begin{align*}
  Y_t=x\exp\left(\left[\frac{a}{c}-\frac{1}{2}\right]t-\frac{b}{c}\int_0^tY_sds+W_t\right)\,,
\end{align*}
and define $A_t=\frac{1}{c}\int_0^t Y_sds$, $\eta_t=\inf\{s>0,\,
A_s>t\}$ and $X_t=Y_{\eta_t}$. It is not hard to see that there exists
a standard Brownian motion, which by an abuse of notation we denote
again by $W$, which is such that
\begin{align*}
dX_t&=(a-b X_t)dt+\sqrt{c X_t}dW_t,\\
X_0&=x.
\end{align*}
Since $X_{A_t}=Y_t$ and $Y_t>0$ for all $t<\infty$, for $X_t$ to hit
zero in finite time, it is necessary that $A_\infty<\infty$ and
$Y_\infty=0$.  But from the above formula for $Y_t$, we deduce that if
$a\ge c/2$, on
the event $\{A_\infty<\infty\}$, $\limsup_{t\to\infty}Y_t=+\infty$,
which implies that $A_\infty=+\infty$.
\end{proof}

We will need a slightly better result which generalizes the preceding Lemma.
\begin{lemma}\label{le:posit}
Let $\{X_t,\, t\ge0\}$ and $\{Y_t,\, t\ge0\}$ be continuous $\R_+$--valued $\F_t$--adapted
processes which satisfy $0\le Y_t\le X_t$ for all $t\ge0$, with $Y_0>0$,
\begin{align*}
dX_t&=(a-b X_t)dt+\sqrt{c Y_t}dW_t,\\
X_0&=x,
\end{align*}
where $b, c >0$, $\{W_t,\, t\ge0\}$ is a standard $\F_t$--Brownian
motion and $x>0$. If $a\ge c/2$, then a. s. $X_t>0$ for all $t\ge0$.
\end{lemma}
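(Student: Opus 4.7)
I would mimic the Feller-type argument of Lemma~\ref{le:posit0}, but applied through a Lyapunov function directly on $X_t$ rather than a Lamperti change of variables. The natural choice is $\phi(x) = -\log x$, since $\phi(0^+) = +\infty$ and the It\^o correction $\tfrac12\phi''(X_t)\,d\langle X\rangle_t = \tfrac{cY_t}{2X_t^2}\,dt$ is exactly of the form that can be absorbed into the drift contribution $-\tfrac{a}{X_t}$ coming from $\phi'(X_t)(a-bX_t)$, precisely because of the two hypotheses $Y_t \le X_t$ and $a \ge c/2$.

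First I would localize by $\tau_n = \inf\{t \ge 0 : X_t \le 1/n\}$, so that $X_t \ge 1/n$ on $[0, \tau_n]$, and reduce the statement to showing $\tau_n \to \infty$ almost surely. On $[0, \tau_n]$ It\^o's formula gives
\[
d\phi(X_t) = \Bigl(-\frac{a}{X_t} + b + \frac{cY_t}{2X_t^2}\Bigr)\,dt - \frac{\sqrt{cY_t}}{X_t}\,dW_t.
\]
The estimate $\tfrac{cY_t}{2X_t^2} \le \tfrac{c}{2X_t}$, which uses $0 \le Y_t \le X_t$, turns the drift into $b + \tfrac{c/2 - a}{X_t} \le b$ by the hypothesis $a \ge c/2$. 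The stochastic integral on $[0, t \wedge \tau_n]$ has bracket bounded by $cnt$, so it is a genuine $L^2$-martingale with zero mean, and taking expectations yields
\[
\E\,\phi(X_{t \wedge \tau_n}) \le \phi(x) + bt = -\log x + bt.
\]

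To convert this into a non-hitting statement, I would split according to whether $\tau_n \le t$ or not. On $\{\tau_n \le t\}$ one has $\phi(X_{t \wedge \tau_n}) = \log n$; on the complement I would use the convex tangent-line inequality $-\log y \ge 1 - y$, together with a standard truncation on the SDE (stopping at $\sigma_N = \inf\{t : X_t \ge N\}$, taking expectations, and applying Fatou) to obtain $\E\, X_t \le x + at$. Combining these ingredients gives
\[
(\log n)\,\P(\tau_n \le t) \le -\log x + bt + \E\, X_t \le -\log x + (a + b)t + x.
\]
Dividing by $\log n$ and sending $n \to \infty$ forces $\P(\tau_n \le t) \to 0$ for every fixed $t > 0$, hence $\lim_n \tau_n = \infty$ almost surely, i.e.\ $X_t > 0$ for all $t \ge 0$. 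There is no serious obstacle here, and the argument handles the borderline case $a = c/2$ gracefully: the gain $-\tfrac{a - c/2}{X_t}$ vanishes but remains nonpositive, which is all that is needed to close the drift estimate.
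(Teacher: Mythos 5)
Your proposal is correct and takes a genuinely different route from the paper. The paper first proves Lemma~\ref{le:posit0} (the case $Y\equiv X$, i.e.\ a CIR-type SDE) by a Lamperti-style random time change of an exponential-type process, and then derives the present lemma from it by a second random time change $B_t=\int_0^t(Y_s/X_s)\,ds$ combined with a pathwise comparison argument on excursion intervals $[T_k,S_k]$ where the time-changed process sits below $a/b$. Your argument instead works directly with the Lyapunov function $\phi(x)=-\log x$: localizing at $\tau_n=\inf\{t:X_t\le 1/n\}$ makes the local martingale $\int_0^{\cdot\wedge\tau_n}\sqrt{cY_s}X_s^{-1}\,dW_s$ square-integrable, the two hypotheses $Y_t\le X_t$ and $a\ge c/2$ together collapse the It\^o correction into the drift to give $\E\,\phi(X_{t\wedge\tau_n})\le -\log x+bt$, and the tangent-line bound $-\log y\ge 1-y$ together with the elementary estimate $\E X_t\le x+at$ converts this into $(\log n)\,\P(\tau_n\le t)\le -\log x +(a+b)t+x$, which forces $\P(\tau_n\le t)\to 0$. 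This is a self-contained, one-step proof that dispenses with Lemma~\ref{le:posit0} entirely and makes the Feller boundary condition $a\ge c/2$ visible as an exact cancellation in the drift of $-\log X_t$; the paper's two-step argument is structurally more illuminating (it exhibits the reduction to the CIR process and the role of the Lamperti transform) but requires more machinery and more care about the well-posedness of the random time changes. Both handle $Y$ possibly touching zero, since $Y_t\le X_t$ gives $Y_s/X_s\le 1$ in the paper's time change and gives the needed bound on the It\^o correction in yours.
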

\begin{proof}[Proof of Lemma~\ref{le:posit}]
We define
\begin{align*}
B_t&=\int_0^t \frac{Y_s}{X_s}ds,\quad
\sigma_t=\inf\{s>0,\, B_s>t\},\quad\text{and}\quad
Z_t=X_{\sigma_t}.
\end{align*}
There exists a standard Brownian motion, still denoted by $W$, such that
\begin{align*}
dZ_t&=(a-b Z_t)\frac{Z_t}{Y_{\sigma_t}}dt+\sqrt{c Z_t}dW_t,\\
X_0&=x.
\end{align*}
Define two sequences of stopping times as follows. $S_0=0$, and for $k\ge1$,
\begin{align*}
T_k&=\inf\left\{t>S_{k-1},\, Z_t<\frac{a}{2b}\right\}\quad\text{and}\quad
S_k=\inf\left\{t>T_{k},\, Z_t>\frac{a}{b}\right\}.
\end{align*}
On each interval $[T_k,S_k]$, since $Z_t/Y_{\sigma_t}\ge1$, by a
standard comparison theorem for SDEs we can bound from below $Z_t$ by
the solution of the equation of the previous Lemma, starting from
$a/2b$. Hence $Z_t$ never hits zero.
\end{proof}

Using Lemma~\ref{le:posit}, we are now in a position to prove the
following result.
\begin{theorem}\label{th:uniq}
Assume that the initial condition $(U_0,V_0)$ satisfies $U_0 >0$, $V_0>0$.
Any solution of
equation \eqref{UV}  lives in the
set
$(0,+\infty)\times(0,+\infty)$  for all positive times.
\end{theorem}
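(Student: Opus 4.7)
The plan is to adapt the strategy of Lemma~\ref{le:posit}, with a crucial improvement that exploits the structure of $\Gamma$ near the coordinate axes. The central analytic input is Proposition~\ref{prop:Lambda}'s assertion that $\Lambda$ takes values in $[1/2,1]$. This yields
\[
   \tfrac12(u\wedge v) \;\le\; \Gamma(u,v) \;\le\; u\wedge v \qquad \text{for all } u,v\ge 0,
\]
which guarantees that the square roots in \eqref{UV} are well defined and, more importantly, gives the sharper bound $u - \Gamma(u,v)\le u/2$ whenever $u\le v$ (and symmetrically for $v$).

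By the symmetry of \eqref{UV} it suffices to establish $U_t>0$ a.s.\ for all $t\ge 0$; the identical reasoning will apply to $V_t$. Writing $Y_t := U_t - \Gamma(U_t,V_t)$, the first line of \eqref{UV} reads
\[
   dU_t = (2\sigma_1^2 - 2U_t)\,dt + \sqrt{8\sigma_1^2\,Y_t}\,dW_t^{(1)}, \qquad 0\le Y_t \le U_t.
\]
A direct appeal to Lemma~\ref{le:posit} with $(a,b,c) = (2\sigma_1^2,\,2,\,8\sigma_1^2)$ would require $a\ge c/2$, i.e.\ $2\sigma_1^2\ge 4\sigma_1^2$, and therefore fails. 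The remedy is the sharper bound $Y_t\le U_t/2$ valid whenever $U_t\le V_t$: in that regime the effective squared noise coefficient is at most $4\sigma_1^2\,U_t$, and the Feller threshold $2\sigma_1^2\ge (4\sigma_1^2)/2 = 2\sigma_1^2$ holds with equality --- a borderline case still covered by Lemma~\ref{le:posit0}.

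To localise this observation I would fix $\delta\in(0,\,V_0\wedge\sigma_1^2)$ and introduce the stopping time $\zeta_\delta := \inf\{t\ge 0 : V_t \le \delta\}$, which is a.s.\ strictly positive by continuity of $V$ and the hypothesis $V_0>0$. On $[0,\zeta_\delta]$ every excursion of $U_t$ into $(0,\delta)$ lies in the regime $U_t\le V_t$, so the sharper inequality applies. Following the alternating-stopping-time and time-change construction in Lemma~\ref{le:posit} with $B_t = \int_0^t Y_s/U_s\,ds$ (whose derivative is at most $1/2$ on those excursions), the time-changed process dominates --- via the standard one-dimensional SDE comparison theorem --- a CIR-type process $dZ^*_t = 2(2\sigma_1^2 - 2Z^*_t)\,dt + \sqrt{8\sigma_1^2\,Z^*_t}\,dW_t$ starting from $\delta/2$. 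For that comparison the Feller condition $2\cdot 4\sigma_1^2 \ge 8\sigma_1^2$ holds with equality, so Lemma~\ref{le:posit0} gives $Z^*_t>0$, whence $U_t>0$ on $[0,\zeta_\delta]$. Running the symmetric argument for $V_t$, coupling the two through the joint stopping time $\tau_\delta := \inf\{t : U_t\wedge V_t \le \delta\}$, and letting $\delta\downarrow 0$ will yield simultaneous strict positivity.

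I expect the main obstacle to lie in this coupled bootstrap: each single-coordinate argument presupposes a strictly positive lower bound on the other in order to secure $U_t\le V_t$ (or its symmetric counterpart), so the simultaneous positivity of both must be obtained through the joint iteration rather than sequentially. Additional care is needed for the book-keeping of excursions, for verifying that the relevant time change diverges so that the comparison is valid for all positive times, and for the invocation of SDE comparison at the degenerate boundary. The essential analytic content that rescues the argument, however, is the uniform lower bound $\Lambda\ge 1/2$ furnished by Proposition~\ref{prop:Lambda}.
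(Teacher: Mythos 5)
Your first step — showing that $(U_t,V_t)$ cannot hit a point of the form $(0,v)$ with $v>0$ — is exactly the paper's argument: you identify the sharper bound $U_t-\Gamma(U_t,V_t)\le U_t/2$ whenever $U_t\le V_t$ (a direct consequence of $\Lambda\ge\tfrac12$ from Proposition~\ref{prop:Lambda}), observe that the raw application of Lemma~\ref{le:posit} with $(a,b,c)=(2\sigma_1^2,2,8\sigma_1^2)$ misses the Feller threshold by a factor of two, and note that the sharper bound restores equality. This matches the paper.

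The genuine gap is the origin. You flag, correctly, that ``each single-coordinate argument presupposes a strictly positive lower bound on the other,'' but the remedy you sketch — introducing $\tau_\delta=\inf\{t: U_t\wedge V_t\le\delta\}$ and letting $\delta\downarrow 0$ — does not close the loop. What that bootstrap actually establishes is that $U$ cannot vanish while $V$ is still positive and vice versa; equivalently, the first hitting times of $\{U=0\}$ and $\{V=0\}$ must coincide. This is precisely the conclusion that ``the only axis point the process could possibly reach is $(0,0)$,'' and it leaves the simultaneous hit of the origin entirely unaddressed. No amount of excursion book-keeping in the single coordinates will help here, because near the origin one no longer has a stable sign for $U-V$, so the bound $U-\Gamma\le U/2$ cannot be relied upon, and the martingale terms in the $U$-equation and the $V$-equation can both degenerate simultaneously.

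The paper handles the origin by a separate device: it forms the weighted sum $K_t=\sigma_1^{-2}U_t+\sigma_2^{-2}V_t$. With these particular weights the drift of $K_t$ becomes $(4-2K_t)\,dt$ and the quadratic variation becomes exactly $8\bigl[K_t-(\sigma_1^{-2}+\sigma_2^{-2})\Gamma(U_t,V_t)\bigr]\,dt$, so that $K_t$ satisfies an autonomous-looking SDE of the form in Lemma~\ref{le:posit} with $Y_t=K_t-(\sigma_1^{-2}+\sigma_2^{-2})\Gamma(U_t,V_t)$ sandwiched between $0$ and $K_t$ (using only $0\le\Gamma\le u\wedge v$), and the Feller condition $a\ge c/2$ is met with equality ($4\ge 8/2$). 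Lemma~\ref{le:posit} then shows $K_t>0$ a.s., which rules out $(U_t,V_t)=(0,0)$. The choice of weights is not cosmetic: it is what makes the quadratic variation a clean multiple of $K_t$ minus something nonnegative, which is exactly the structure Lemma~\ref{le:posit} requires. Your proposal is missing this step; without it the theorem is not proved.
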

\begin{proof}
 We first prove that any solution $(U_t,V_t)$
never hits the two axis, except possibly at $(0,0)$.  The fact that $(U_t,V_t)$
cannot hit $(0,v)$ with $v>0$ follows clearly from the equation for
$U_t$ and Lemma \ref{le:posit}, once we have noted that whenever
$U_t<V_t$, $U_t-\Gamma(U_t,V_t)\le U_t/2$, as follows from Proposition
\ref{prop:Lambda}.

The same proof shows that $(U_t,V_t)$ cannot hit $(u,0)$ with
$u>0$. It remains to show that $(U_t,V_t)$ cannot hit $(0,0)$.
 Let $a=\sigma_1^{-2}$, $b=\sigma_2^{-2}$,
$K_t=aU_t+bV_t$. There exists a standard Brownian motion $W_t$ such
that
\begin{align*}
  dK_t=(4-2K_t)dt+ \sqrt{8[K_t-(a+b)\Gamma(U_t,V_t)]}dW_t.
\end{align*}
The result again follows from Lemma \ref{le:posit}, since $\Gamma(U_t,V_t)\ge0$.
\end{proof}

We next establish a crucial property shared by all the possible accumulation points of the collection $\{(U^\epsilon,V^\epsilon)\}_{\epsilon>0}$.

\begin{theorem}\label{accumul}
If $(U,V)=\lim_{n\to\infty}(U^{\epsilon_n},V^{\epsilon_n})$, for some sequence  $\epsilon_n\to0$, then
\begin{equation*}
  \int_0^t{\bf1}_{\{U_s=V_s\}}ds=0\text{  for all  }t>0 \text{ almost surely}\,.
\end{equation*}
\end{theorem}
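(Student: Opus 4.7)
\emph{Proof plan.} The plan is to use the fact (proved in Section~\ref{sec8}) that any accumulation point $(U,V)$ satisfies the SDE~\eqref{UV}, together with Theorem~\ref{th:uniq} and the general identity $\int_0^t \mathbf{1}_{\{D_s=0\}}\,dD_s = 0$, valid for any continuous semimartingale $D:=U-V$ (a consequence of comparing Tanaka's formula $|D_t| = |D_0| + \int_0^t \mathrm{sign}(D_s)\,dD_s + L_t(0)$ with the Meyer--Tanaka decomposition $|D_t| = D_t^+ + D_t^-$).

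From~\eqref{UV}, $D_s$ is a continuous semimartingale with
\[
dD_s = \bigl[2(\sigma_1^2-\sigma_2^2) - 2D_s\bigr]\,ds + dN_s, \qquad d\langle N\rangle_s = a_s\,ds,
\]
where $a_s = 8\sigma_1^2(U_s - \Gamma(U_s,V_s)) + 8\sigma_2^2(V_s - \Gamma(U_s,V_s))$. Proposition~\ref{prop:Lambda} gives $\Lambda(1)=1$, so $\Gamma(u,u)=u$; since by Theorem~\ref{th:uniq} $(U_s,V_s)\in(0,+\infty)^2$ for all $s\ge0$, this forces $\mathbf{1}_{\{D_s=0\}}\,a_s \equiv 0$. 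Therefore the It\^o integral $\int_0^t \mathbf{1}_{\{D_s=0\}}\,dN_s$ is a continuous local martingale of vanishing quadratic variation and is identically zero. Combining with the identity above,
\[
0 \;=\; \int_0^t \mathbf{1}_{\{D_s=0\}}\,dD_s \;=\; 2(\sigma_1^2-\sigma_2^2)\int_0^t \mathbf{1}_{\{D_s=0\}}\,ds.
\]

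When $\sigma_1 \ne \sigma_2$ the conclusion is immediate. The case $\sigma_1=\sigma_2$ is more delicate because this identity becomes vacuous; then I would return to the pre-limit and apply Portmanteau to the lower-semicontinuous functional $(U,V)\mapsto \int_0^t \mathbf{1}_{\{|U_s - V_s| < \delta\}}\,ds$ combined with monotone convergence as $\delta \downarrow 0$, which reduces matters to showing $\sup_\epsilon \mathbf{E}\int_0^t \mathbf{1}_{\{|D^\epsilon_s|<\delta\}}\,ds \to 0$ as $\delta\to 0$. The occupation-times formula together with the Tanaka bound $\sup_{|y|\le 1}\mathbf{E}L^\epsilon_t(y)\le C_t$ (the latter coming from the moment bounds of Proposition~\ref{prop:energy}) yields $\mathbf{E}\int_0^t\mathbf{1}_{\{|D^\epsilon_s|<\delta\}}\,a^\epsilon_s\,ds \le C_t\,\delta$ uniformly in $\epsilon$, and splitting on a threshold $\eta$ on $(\widetilde X^\epsilon_s)^2 + (\widetilde Y^\epsilon_s)^2$ reduces everything to a uniform occupation-time bound in a tube around the $z$-axis.

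The main obstacle is precisely this last bound in the symmetric case, where the $\epsilon^{-1}$ Hamiltonian drift in~\eqref{fast} must be tamed. A natural route is the deterministic identity $4\epsilon^{-1}\widetilde X^\epsilon_s\widetilde Y^\epsilon_s\widetilde Z^\epsilon_s = -\frac{d}{ds}(\widetilde Z^\epsilon_s)^2 - 2(\widetilde Z^\epsilon_s)^2$, which rewrites the singular drift as a finite-variation term and enables an integration by parts in Tanaka's formula applied to $|\widetilde X^\epsilon_s|$ or to $(\widetilde X^\epsilon_s)^2 + (\widetilde Y^\epsilon_s)^2$, producing an $\eta$--independent bound on the tube occupation time of order $O(\sqrt\eta)$.
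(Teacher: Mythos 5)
Your first paragraph rests on the claim that $\int_0^t \mathbf{1}_{\{D_s=0\}}\,dD_s = 0$ holds for ``any continuous semimartingale.'' This is not a general identity, and the Tanaka/Meyer--Tanaka comparison you invoke does not produce it. Writing $D = D_0 + N + V$ with $N$ a local martingale and $V$ of finite variation, what one actually gets (by comparing $|D|=D^++D^-$ with Tanaka, or directly from Revuz--Yor VI.1.7) is
\begin{equation*}
\int_0^t \mathbf{1}_{\{D_s=0\}}\,dD_s \;=\; \tfrac12\bigl[L_t^{0}-L_t^{0-}\bigr] \;=\; \int_0^t \mathbf{1}_{\{D_s=0\}}\,dV_s,
\end{equation*}
which vanishes for local martingales but not in general (reflected Brownian motion is a counterexample: the left-hand side equals $L_t$). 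In your situation $dV_s = \bigl[2(\sigma_1^2-\sigma_2^2)-2D_s\bigr]\,ds$, so the identity reduces to $\int_0^t \mathbf{1}_{\{D_s=0\}}\,dD_s = 2(\sigma_1^2-\sigma_2^2)\int_0^t\mathbf{1}_{\{D_s=0\}}\,ds$, which is precisely (a constant multiple of) the quantity you are trying to show is zero. The argument is therefore circular in the non-symmetric case and, as you note, vacuous in the symmetric case. A further caution: since the diffusion coefficient of $D$ vanishes on the diagonal, $(U,V)$ being a solution of \eqref{UV} does \emph{not} by itself imply the conclusion --- degenerate CIR-type diffusions can and do sit at zero for positive Lebesgue time --- so no argument working only at the level of \eqref{UV} and ignoring the quantitative degeneracy rate can possibly succeed. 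What saves the day is the logarithmic rate $1-\Lambda(1-\e)\sim 2/|\log\e|$ from Proposition~\ref{prop:Lambda}, and this has to enter somewhere.

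Your second paragraph is closer in spirit to the paper's proof but remains a sketch whose hardest step is essentially the theorem itself: the uniform-in-$\epsilon$ tube occupation time bound $\sup_\epsilon\E\int_0^t\mathbf{1}_{\{(\widetilde X_s^\epsilon)^2+(\widetilde Y_s^\epsilon)^2\le\eta\}}\,ds = o(1)$ as $\eta\to0$ is equivalent to the statement that $(U,V)$ spends no time on the diagonal, and the deterministic-identity/integration-by-parts route you gesture at is unlikely to close it without a new idea, because the deterministic orbits with $u$ near $v$ genuinely spend most of their period in that tube. The paper's proof is short-circuited and works directly: it applies It\^o to $\varphi_{M,\delta}(J_s^{\epsilon_n})$ where $J^\epsilon=U^\epsilon-V^\epsilon$ and $\varphi_{M,\delta}''(x)=[\log(1/|x|)\wedge M]\mathbf{1}_{[-\delta,\delta]}(x)$, exploits the fact that $\sigma_1^2(\widetilde X_s^{\epsilon_n})^2+\sigma_2^2(\widetilde Y_s^{\epsilon_n})^2>0$ a.e.\ \emph{before} the limit so that one may let $M\to\infty$, passes to $n\to\infty$ by Fatou, and then, crucially, uses the $1/|\log|$ rate of degeneracy to observe that $\bigl[\sigma_1^2(u-\Gamma)+\sigma_2^2(v-\Gamma)\bigr]\log(1/|u-v|)$ stays bounded away from zero near the diagonal on compacts away from the origin. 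It is precisely this cancellation between the $\log$ in the test function and the $1/\log$ in the degeneracy that your proposal is missing.
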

\begin{proof}
For any $M, \delta>0$, let
\begin{align*}
  \psi_{M,\delta}(x)=\left[\log\left(\frac{1}{|x|}\right)\wedge M\right]{\bf1}_{[-\delta,\delta]}(x), \
  \psi_\delta(x)=\log\left(\frac{1}{|x|}\right){\bf1}_{[-\delta,\delta]}(x).
\end{align*}
We define $\varphi_{M,\delta}$ and $\varphi_\delta\in C^1(\R)\cap W^{2,\infty}(\R)$ by
\begin{align*}
\varphi_{M,\delta}(0)=0,&\ \varphi_{M,\delta}'(0)=0, \varphi''_{M,\delta}=\psi_{M,\delta},\\
\varphi_{\delta}(0)=0,&\ \varphi_{\delta}'(0)=0, \varphi''_{\delta}=\psi_{\delta}.
\end{align*}
Let $J^\epsilon_t=U^\epsilon_t-V^\epsilon_t$. It follows from It\^o's formula, which can be applied here although $³\varphi_{M,\delta}\not\in C^2(\R)$,  that for each $M>0$, $n\ge1$,
\begin{multline*}
8\E\int_0^t \left[\sigma_1^2(\tilde{X}^{\epsilon_n}_s)^2+\sigma_2^2(\tilde{Y}^{\epsilon_n}_s)^2\right]
\psi_{M,\delta}(J^{\epsilon_n}_s)ds\\
\le\E\left(\varphi_{M,\delta}(J^{\epsilon_n}_t)-\varphi_{M,\delta}(J^{\epsilon_n}_0)-2\int_0^t(\sigma_1^2-\sigma_2^2-J_s^{\epsilon_n})\varphi'_{M,\delta}(J^{\epsilon_n}_s)ds\right).
\end{multline*}
We now let $M\to\infty$. Since $\sigma_1^2(\tilde{X}^{\epsilon_n}_s)^2+\sigma_2^2(\tilde{Y}^{\epsilon_n}_s)^2>0$
$(ds\times d\P)$-almost everywhere, we deduce that
\begin{multline*}
8\E\int_0^t \left[\sigma_1^2(\tilde{X}^{\epsilon_n}_s)^2+\sigma_2^2(\tilde{Y}^{\epsilon_n}_s)^2\right]
\psi_\delta(J^{\epsilon_n}_s)ds\\
\le\E\left(\varphi_{\delta}(J^{\epsilon_n}_t)-\varphi_{\delta}(J^{\epsilon_n}_0)-2\int_0^t(\sigma_1^2-\sigma_2^2-J_s^{\epsilon_n})\varphi'_{\delta}(J^{\epsilon_n}_s)ds\right).
\end{multline*}
We now take the limit in the last inequality as $n\to\infty$, and deduce from Fatou's Lemma that
\begin{multline*}
   \E\int_0^t \left[\sigma_1^2(U_s-\Gamma(U_s,V_s))+\sigma_2^2(V_s-\Gamma(U_s,V_s))\right]
\psi_\delta(J_s)ds\\
\le\E\left(\varphi_{\delta}(J_t)-\varphi_{\delta}(J_0)-2\int_0^t(\sigma_1^2-\sigma_2^2-J_s)\varphi'_{\delta}(J_s)ds\right).
\end{multline*}
It follows from Proposition \ref{prop:Lambda} that
to any $c>0$, we can associate $\delta>0$ and $a>0$ such that whenever $u,v\ge c>0$, and $-\delta\le k=u-v\le \delta$,
$$4\left[\sigma_1^2(u-\Gamma(u,v))+\sigma_2^2(v-\Gamma(u,v))\right]
\log\left(\frac{1}{|k|}\right)\ge a>0.$$ Consequently the above establishes that
\begin{multline*}
  a\E\int_0^t{\bf1}_{U_s\ge c,V_s\ge c}{\bf1}_{(-\delta,\delta)}(J_s)ds\\\le\E\left(\varphi_{\delta}(J_t)-\varphi_{\delta}(J_0)-2\int_0^t(\sigma_1^2-\sigma_2^2-J_s)\varphi'_{\delta}(J_s)ds\right),
\end{multline*}
and letting finally $\delta\to0$, we deduce that
\begin{equation*}
  \E\int_0^t{\bf1}_{U_s\ge c,V_s\ge c}{\bf1}_{\{0\}}(J_s)ds=0.
\end{equation*}
Since we know that both $U_t$ and $V_t$ never reaches $0$, and $c>0$ is arbitrary, this shows that $J_t=U_t-V_t$ spends a.s. zero time at 0, i.e. that the process $(U_t,V_t)$ spends a.s. zero time on the diagonal.
\end{proof}

Now that we know that $(U,V)$ spends no time on the diagonal,
we can introduce the following time change. Let for $u,v>0$,
\begin{align}\label{eqF}
F(u,v)&=\begin{cases}1-\Lambda\left(\frac{u\wedge v}{u\vee v}\right),&\text{if
$\frac{u\wedge v}{u\vee v}\ge\frac{1}{2}$,}\\
1-\Lambda\left(\frac{1}{2}\right),&\text{if $\frac{u\wedge v}{u\vee v}<\frac{1}{2}$.}\end{cases}
\end{align}
Let us define the time change
 \begin{equation}\label{tc}
 \begin{split}
 A_t&=\int_0^t F(U_s,V_s) ds,\ \eta_t=\inf\{s>0,\,A_s>t\},\\
 H_t&=U_{\eta_t},\ \text{  and }K_t=V_{\eta_t}.
\end{split}
\end{equation}
There exists a two--dimensional Wiener process, which we still denote by
$(W^{(1)}_t,W^{(2)}_t)$, such that
\begin{equation}\label{HKeq}
\begin{split}
dH_t&=2\frac{\sigma^2_1-H_t}{F(H_t,K_t)}dt
+2\sqrt{2}\sigma_1\sqrt{\frac{H_t-H_t\wedge K_t}{F(H_t,K_t)}+(H_t\wedge K_t)G(H_t,K_t)}\
dW^{(1)}_t,\\
dK_t&=2\frac{\sigma^2_2-K_t}{F(H_t,K_t)}dt+
2\sqrt{2}\sigma_2\sqrt{\frac{K_t-H_t\wedge K_t}{F(H_t,K_t)}+(H_t\wedge K_t)G(H_t,K_t)}\
dW^{(2)}_t,
\end{split}
\end{equation}
where
\begin{equation*}
  G(H_t,K_t)=\frac{1-\Lambda\left(\frac{H_t\wedge K_t}{H_t\vee K_t}\right)}{F(H_t,K_t)}\,.
\end{equation*}
It is easily verified that the diffusion matrix of this system is locally uniformly elliptic in
$(0,+\infty)\times(0,+\infty)$ and continuous.
However, the drift is unbounded near the diagonal.
We will now prove uniqueness of the weak solution of \eqref{HKeq}, using methods and results from Portenko \cite{Por}. \cite{Por} constructs a weak solution to an equation like \eqref{HKeq} from the solution without drift, using Girsanov's theorem, provided the drift is in $L^p(\R^2)$, with $p>4$. His uniqueness theorem is proved under conditions which are difficult to verify. The condition is tailored to make sure that Girsanov's theorem can be used
to show that the law of the equation with drift is absolutely continuous with respect to that of the equation without drift.
We will do that by verifying the condition of Lemma 1.1 from \cite{Por}, which we now state
\begin{lemma}\label{Por1.1}
Assume that $\{Z_t,\ t\ge0\}$ is a non--negative progressively measurable process, adapted to the $\sigma$--algebra
$\{\F_t,\ t\ge0\}$. Suppose that there exists a mapping $\rho$ from the set of subintervals of $[0,T]$ into
$\R_+$, such that
\begin{enumerate}
\item[(i)] $\E\left[\int_s^tZ_rdr\Big|\F_s\right]\le \rho(s,t)$, for all $0\le s<t\le T$;
\item[(ii)] $\rho(t_1,t_2)\le\rho(t_3,t_4)$, whenever $(t_1,t_2)\subset(t_3,t_4)$;
\item[(iii)]
  \begin{align*}
    \lim_{h\downarrow0}\ \sup_{\substack{0\le s<t\le T\\ t-s\le h}}\ \rho(s,t)=\kappa
  \end{align*}
\end{enumerate}
Then for any $\lambda <\kappa^{-1}$,
\begin{align*}
  \E\exp\left(\lambda\int_0^TZ_tdt\right)<\infty.
\end{align*}
\end{lemma}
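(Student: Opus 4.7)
The plan is to follow a Khasminskii-type argument: first bound the moments of $\int_0^T Z_t\,dt$ by iterated conditioning, then sum the Taylor series of the exponential. A direct application of (i) at scale $[0,T]$ would only give finiteness for $\lambda<\rho(0,T)^{-1}$, so a preliminary chopping of $[0,T]$ into short sub-intervals on which $\rho$ is close to $\kappa$ is needed.

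Step one: given $\lambda<\kappa^{-1}$, choose $\delta>0$ so small that $\lambda(\kappa+\delta)<1$ and use (iii) to pick $h>0$ with $\rho(s,t)\le\kappa+\delta$ whenever $0\le s<t\le T$ and $t-s\le h$. Partition $[0,T]$ by $0=a_0<a_1<\cdots<a_N=T$ with $a_j-a_{j-1}\le h$, and set $X_j=\int_{a_{j-1}}^{a_j}Z_r\,dr$.

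Step two, the key estimate. I claim that for every integer $n\ge0$ and every $j$,
$$\E\bigl[X_j^n\,\big|\,\F_{a_{j-1}}\bigr]\le n!\,(\kappa+\delta)^n\quad\text{a.s.}$$
Rewriting $X_j^n$ as $n!$ times the integral of $Z_{s_1}\cdots Z_{s_n}$ over the simplex $\{a_{j-1}\le s_1\le\cdots\le s_n\le a_j\}$, and using the progressive measurability of $Z$ together with Fubini, one can apply the tower property to the innermost variable and invoke hypotheses (i) and (ii):
$$\E\Bigl[\int_{s_{n-1}}^{a_j}Z_{s_n}\,ds_n\,\Big|\,\F_{s_{n-1}}\Bigr]\le \rho(s_{n-1},a_j)\le \rho(a_{j-1},a_j)\le \kappa+\delta.$$
Peeling off one variable at a time by this device and proceeding inductively yields the claimed factorial bound.

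Step three, assembly. Summing the Taylor series gives $\E[e^{\lambda X_j}\mid\F_{a_{j-1}}]\le (1-\lambda(\kappa+\delta))^{-1}$ almost surely. Since $\exp(\lambda\int_0^T Z_t\,dt)=\prod_{j=1}^N e^{\lambda X_j}$, successively conditioning on $\F_{a_{N-1}},\F_{a_{N-2}},\ldots,\F_{a_0}$ produces
$$\E\exp\Bigl(\lambda\int_0^T Z_t\,dt\Bigr)\le \bigl(1-\lambda(\kappa+\delta)\bigr)^{-N}<\infty,$$
which is the desired conclusion.

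The most delicate point is the Fubini-type exchange in step two: the $s_i$ appear both as integration variables and as the indices of the $\sigma$-algebras $\F_{s_i}$, so one must justify the interchange of conditional expectation and the iterated integral. Progressive measurability of $Z$ combined with Fubini makes this rigorous, but the bookkeeping must be carried out carefully; the remainder of the proof is routine summation and monotone-inclusion use of (ii).
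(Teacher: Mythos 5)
Your proof is correct, and it is worth noting that the paper does not actually prove this lemma: it is quoted verbatim as Lemma~1.1 from Portenko's book \cite{Por}, so there is no internal proof to compare against. Your argument is the classical Khasminskii-lemma argument, and it works as written. The key points check out: condition (iii) lets you find $h$ so that $\rho(s,t)\le\kappa+\delta$ on intervals of length $\le h$ with $\lambda(\kappa+\delta)<1$; the moment bound $\E[X_j^n\mid\F_{a_{j-1}}]\le n!(\kappa+\delta)^n$ follows by writing $X_j^n$ as $n!$ times the integral over the ordered simplex, applying the tower property to the innermost variable, using (i) to bound $\E[\int_{s_{n-1}}^{a_j}Z_{s_n}\,ds_n\mid\F_{s_{n-1}}]$ by $\rho(s_{n-1},a_j)$, and using (ii) to push this up to $\rho(a_{j-1},a_j)\le\kappa+\delta$; summing the series gives the conditional exponential bound; and since $\prod_{j<N}e^{\lambda X_j}$ is $\F_{a_{N-1}}$-measurable, the backward chaining over the $N$ blocks gives the $(1-\lambda(\kappa+\delta))^{-N}$ bound. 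You are also right to flag the only genuinely delicate technical step, namely justifying the interchange of the $\F_{s_{n-1}}$-conditioning with the iterated Lebesgue integral: progressive measurability plus conditional Tonelli (everything is nonnegative) handles it, with null sets depending on the integration variables absorbed by Fubini. This is essentially the same proof one would find in Portenko or in the standard presentations of Khasminskii's lemma.
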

We intend to apply Proposition~\ref{Por1.1}  for a $Z_t$ which will
give us sufficient control over the drift in \eqref{HKeq} that we can
use Novikov's criterion  and Girsanov's Theorem to transform the SDE \eqref{HKeq} into the same equation without
drift, and hence prove uniqueness of the solution using an argument  in the vein of Theorem 1.2 from
\cite{Por}.

To better understand how to use Lemma~\ref{Por1.1} to remove the drift
from \eqref{HKeq} en route to prove uniqueness of the solution, we
take a close look at the drift term.
 Our uniqueness  argument exploits the fact that, since uniqueness is
a local property, we can modify the coefficients of the $(H,K)$
equation outside the set $\{(h,k),\ M^{-1}\le h, k\le M\}$ for some
arbitrary $M>0$, so that tr resulting equation takes the form
\begin{equation}\label{HKSimple}
  \begin{aligned}
dH_t&=\frac{f_1(H_t)}{F(H_t,K_t)}dt+\sigma_1\sqrt{g(H_t,K_t)}dW^{(1)}_t,\\
dK_t&=\frac{f_2(K_t)}{F(H_t,K_t)}dt+\sigma_2\sqrt{g(K_t,H_t)}dW^{(2)}_t.
  \end{aligned}
\end{equation}
where for some $C>0$, $C^{-1}\le g(h,k)\le C$ and
$|f_1(h)|+|f_2(k)|\le C$, for all $h,k\in\R$. Hence the only possible
difficulty will arises if  when $F$ is small. From the definition of $F$ in \eqref{eqF} and
the asymptotics in Lemma~\ref{prop:Lambda}  we note that $F(h,k)$ is
small when $\frac{|h-k|}{h\vee k}$ is small.

Hence if we wish to use Girsanov's theorem to remove the drift from
\eqref{HKSimple}, the danger comes from $|h-k|$ small if we restrict
our attention to the set $\{(h,k),\ M^{-1}\le h, k\le M\}$. The
following simple observation is useful to control the drift.
\begin{lemma}\label{tric}
If $a, b>0$ then
\begin{align*}
  \log\left(\frac{b}{a}\right)\le b +\log\left(\frac{1}{a}\right){\bf1}_{\{a\le1\}}= b +|\log\left(a\right)|{\bf1}_{\{a\le1\}}.
\end{align*}
\end{lemma}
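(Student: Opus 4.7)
The plan is to rewrite $\log(b/a)$ as $\log b - \log a = \log b + \log(1/a)$ and then split on the sign of $\log a$, i.e., on whether $a \le 1$ or $a > 1$. The only analytic input needed is the elementary inequality $\log b \le b$ for all $b > 0$, which follows from $\log b \le b - 1$ (the standard tangent bound for the concave function $\log$ at $1$).

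In the regime $a \le 1$, the indicator equals $1$ and $|\log a| = \log(1/a) = -\log a$, so the right-hand side becomes $b - \log a$ and the inequality collapses to $\log b \le b$, which I invoke directly.

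In the regime $a > 1$, the indicator vanishes, so I must show $\log b - \log a \le b$. Since $\log a > 0$ here, this follows from $\log b \le b$ by simply dropping the negative term $-\log a$ on the left.

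There is essentially no obstacle: the content of the lemma is just the case split combined with $\log x \le x$. The statement is stated in this awkward form because the authors will want to apply it later with $a = |h-k|$ or a similar quantity that may or may not be small, so the $\mathbf{1}_{\{a \le 1\}}$ form is the one that will plug cleanly into the drift bound for \eqref{HKSimple}.
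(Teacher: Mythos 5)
Your argument is correct and is essentially the paper's: rewrite $\log(b/a)=\log b+\log(1/a)$, bound $\log b\le b$, and observe that the remaining term $\log(1/a)$ is nonpositive precisely when the indicator vanishes. Your case split just makes explicit what the paper's one‑line justification leaves implicit.
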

\begin{proof}
  The inequality follow from the fact that if $b >0$ then $b  > \log(b)$
  and if $a\in (0,1]$ then $\log(1/a) \ge 0$. The equality follows
  from $\log\left(\frac{1}{a}\right){\bf1}_{\{a\le1\}}=|\log\left(\frac{1}{a}\right)|{\bf1}_{\{a\le1\}}=|\log\left(a\right)|{\bf1}_{\{a\le1\}}$.
\end{proof}
Combining Lemma~\ref{tric} and
the asymptotics in Lemma~\ref{prop:Lambda}  we note that
\begin{align*}
  \frac{2}{F(h,k)}\lesssim|h\vee k|+ |\log(|h-k|)|{\bf 1}_{|h-k| \leq
    1}.
\end{align*}
Hence on the set  $M^{-1}\le h\vee k\le M$, controlling $F^{-1}(h,k)$ amounts to estimating $|\log(|h-k|)|$
for $|h-k|<a$, where $0<a\le1$ is arbitrary.

  Hence, defining $N_t:=H_t-K_t$, if we desire to apply Lemma~\ref{Por1.1} to $Z_t=1/F^2(H_t,K_t)$
  it will be sufficient to estimate
\begin{multline}
  \label{eq:neededEstimate}
  \E\left[\int_s^t{\bf1}_{\{|N_r|\le1/2\}}\log^2 |N_r|\, dr\Big|\F_s\right]=\\
\E\left[\int_s^t{\bf1}_{\{|N_r|\le1/2\}}\log^2|N_r|\,dr\Big|H_s,K_s\right],
\end{multline}
for $0\le H_s,K_s\le M$ for some $M>0$. Notice that as we will
eventually prove that Lemma~\ref{Por1.1} holds with $\kappa=0$ the
conclusion of Lemma~\ref{Por1.1} will be more than sufficient to
invoke Novikov's criterion.

Recall that $N_t$ satisfies
\begin{align*}
  dN_t=\frac{f_1(H_t)-f_2(K_t)}{F(H_t,K_t)}dt+\sqrt{\overline{g}(H_t,K_t)}dW_t
\end{align*}
with $f_1$, $f_2$, $\overline{g}$ bounded and $\overline{g}$ bounded
away from zero. As a prologue to the needed estimate on
\eqref{eq:neededEstimate}, we prove the following result.
\begin{lemma}\label{timespent}
Let $0<a<b<2a\le1$. For any $M>0$, there exists a constant $C_M$ such that for any $0\le s<t$ with $t-s\le1$,
$|H_s|,|K_s|\le M$,
$$\E\left[\int_s^t{\bf1}_{\{a\le |N_r|\le b\}}\Big|H_s,K_s\right]\le \left[(t-s)\wedge C_M(b-a)^{4/5}\right].$$
\end{lemma}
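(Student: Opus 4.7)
The trivial estimate $(t-s)$ is immediate since the integrand is at most $1$, so I will focus on establishing $\le C_M(b-a)^{4/5}$. By the symmetry between $[a,b]$ and $[-b,-a]$ it suffices to bound $\E\bigl[\int_s^t{\bf 1}_{[a,b]}(N_r)\,dr\bigm|\F_s\bigr]$. My plan is to apply the Meyer--It\^o/occupation-time formula with a convex weight whose second derivative is ${\bf 1}_{[a,b]}$, then exploit the observation that, although the drift $\mu_r$ of $N$ is unbounded near the diagonal, on the set $\{N_r\ge a\}$ it is only logarithmically large in $a$.

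Concretely I will work with $\psi\colon\R\to\R_+$ defined by $\psi(a)=0$ and $\psi'(x)=(x-a)^+\wedge(b-a)$, so that $\psi''={\bf 1}_{[a,b]}$ in the distributional sense, $0\le\psi'\le b-a$, and $\psi(x)\le(b-a)|x|$. Meyer--It\^o together with the occupation time formula then gives
\begin{equation*}
\tfrac12\int_s^t{\bf 1}_{[a,b]}(N_r)\,\overline g(H_r,K_r)\,dr=\psi(N_t)-\psi(N_s)-\int_s^t\psi'(N_r)\mu_r\,dr-\int_s^t\psi'(N_r)\sqrt{\overline g_r}\,dW_r\,.
\end{equation*}
Since $\overline g\ge g_0>0$ and the stochastic integral is a genuine $L^2$-martingale (because $|\psi'|\le b-a$), taking $\E[\,\cdot\mid\F_s]$ will yield
\begin{equation*}
\E\Big[\int_s^t{\bf 1}_{[a,b]}(N_r)\,dr\,\Big|\,\F_s\Big]\le\tfrac{2}{g_0}\Big(\E[\psi(N_t)\mid\F_s]+\psi(N_s)+\E\Big[\int_s^t|\psi'(N_r)\mu_r|\,dr\,\Big|\,\F_s\Big]\Big)\,.
\end{equation*}

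I will then bound the right-hand side as follows. The boundary terms satisfy $\E[\psi(N_t)\mid\F_s]+\psi(N_s)\le C_M(b-a)$ by $\psi(x)\le(b-a)|x|$ and a standard second-moment estimate for the modified SDE using $|H_s|,|K_s|\le M$ and $t-s\le 1$. For the drift term, $\psi'(N_r)=0$ on $\{N_r<a\}$, and on the complementary set Proposition~\ref{prop:Lambda}'s asymptotic $1-\Lambda(1-\epsilon)\sim 2/|\log\epsilon|$, together with the boundedness of $f_1,f_2$ in \eqref{HKSimple} and an $M$-dependent upper bound on $H_r\vee K_r$, gives $F(H_r,K_r)\ge c_M/(1+|\log a|)$ and thus $|\mu_r|\le C_M(1+|\log a|)$. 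Combined with $t-s\le 1$ this produces
\begin{equation*}
\E\Big[\int_s^t{\bf 1}_{\{a\le|N_r|\le b\}}\,dr\,\Big|\,\F_s\Big]\le C_M(b-a)(1+|\log a|)\,.
\end{equation*}
The conclusion $\le C_M(b-a)^{4/5}$ then follows from the hypothesis $b<2a\le 1$, which forces $b-a<a\le 1/2$: the function $a\mapsto a^{1/5}(1+|\log a|)$ is bounded on $(0,1/2]$ (since $a^{1/5}|\log a|\to 0$ as $a\to 0^+$), hence $(b-a)(1+|\log a|)\le (b-a)^{4/5}\cdot a^{1/5}(1+|\log a|)\le C(b-a)^{4/5}$.

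The main obstacle I anticipate is securing the uniform lower bound $F(H_r,K_r)\ge c_M/(1+|\log a|)$ on $\{|N_r|\ge a\}$: this requires both the precise asymptotic of $\Lambda$ near $1$ and an a priori $M$-dependent upper bound on $H_r\vee K_r$. The latter leverages the modification of the coefficients outside $\{(h,k):M^{-1}\le h,k\le M\}$ in \eqref{HKSimple}, after which elementary moment estimates on an interval of length at most one keep $H_r\vee K_r$ under control; any exceptional event where this fails can be absorbed into the error via Markov's inequality and the moment bounds on $(H,K)$.
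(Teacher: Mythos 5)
Your proof is correct and takes essentially the same route as the paper: choose a $C^1\cap W^{2,\infty}$ test function whose second (distributional) derivative is ${\bf1}_{[a,b]}$, apply It\^o/Meyer--It\^o, bound the boundary terms by $(b-a)$ times moments of $N$, control the drift term via $1/F\lesssim C_M(1+|\log a|)$ on $\{N_r\ge a\}$ (using the $\Lambda$-asymptotics and boundedness of $f_1,f_2$), and finally absorb the logarithm into $(b-a)^{1/5}\le a^{1/5}$ using $b-a<a$. The only cosmetic difference is that the paper keeps the $|h\vee k|$ term inside the conditional expectation and bounds $\E[|N_t|+|N_s|\mid\F_s]$ afterwards, rather than asserting a pointwise lower bound on $F$; this sidesteps the caveat about $H_r\vee K_r$ that you flag at the end, but your route also closes after the moment-bound/Markov patch you propose.
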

\bpf
It clearly suffices to prove that
$$\E\left[\int_s^t{\bf1}_{\{a\le |N_r|\le b\}}\Big|H_s,K_s\right]\le C_M(b-a)^{4/5}.$$
We prove this result with $\{a\le |N_r|\le b\}$ replaced by $\{a\le N_r\le b\}$. The same proof would estimate similarly $\{-b\le N_r\le -a\}$.
Let $\varphi_{a,b}\in C^1(\R)\cap W^{2,\infty}(\R)$ be defined by $\varphi_{a,b}(0)=\varphi'_{a,b}(0)=0$,
$\varphi''_{a,b}(x)={\bf1}_{[a,b]}(x)$. Also $\varphi_{a,b}\not\in C^2$, we can apply It\^o's formula
to obtain
\begin{align*}
\frac{1}{2}\int_s^t\overline{g}^2(H_r,K_r){\bf1}_{\{a\le N_r\le b\}}dr&=\varphi_{a,b}(N_t)-\varphi_{a,b}(N_s)\\
&\qquad -\int_s^t\frac{f_1(H_r)-f_2(K_r)}{F(H_r,K_r)}\varphi'_{a,b}(N_r)dr\\
&\le (b-a)[|N_t|+|N_s|]\\&\qquad+C(b-a)\int_s^t \left[1+\log(|N_r|^{-1}){\bf1}_{\{a\le N_r\le 1\}}\right]dr\\
&\le C[1+|N_t|+|N_s|+\log(1/a)](b-a).
\end{align*}
Since there exists $C_M$ such that $\E[|N_t|+|N_s|\  |\  H_s,K_s]\le C_M$ for $|H_s|\le M$, $|K_s|\le M$ and $t-s\le 1$,  and $\sup_{0<a\le1}\log(1/a)(b-a)^{1/5}<\infty $ (we recall that $b-a\le a$), the result is proved.
\epf

With this result in hand, we now return to the needed estimate on \eqref{eq:neededEstimate}
which is contained in the following Lemma.
\begin{lemma}\label{port}
For any $M>0$, there exists a constant $C_M$ such that, for any $0\le s<t$ with $t-s\le1$,
on the event $|H_s|, |K_s|\le M$,
$$ \E\left[\int_s^t{\bf1}_{\{|N_r|\le1/2\}}\log^2 |N_r|\, dr\Big| H_s,K_s\right] \le C_M (t-s)^{1/4}.$$
\end{lemma}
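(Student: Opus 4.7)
\medskip\noindent\textbf{Proof plan.} My plan is to carve the set $\{|N_r|\le 1/2\}$ into a geometric sequence of thin annular shells on which $\log^2|N_r|$ is essentially constant, apply Lemma~\ref{timespent} shell by shell, and then sum the resulting estimates, splitting the sum at the level where the two competing bounds $(t-s)$ and $C_M(b-a)^{4/5}$ from Lemma~\ref{timespent} cross over.

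Fix a ratio $r\in(1/2,1)$ (any such $r$ works; take $r=3/4$ for concreteness) and set $a_n=r^n/2$ for $n\ge 0$, so that $a_0=1/2$ and $a_n\downarrow 0$. Since $r>1/2$, one has $a_n<2a_{n+1}$ and $2a_{n+1}\le 1$, so the pair $(a_{n+1},a_n)$ satisfies the hypothesis of Lemma~\ref{timespent}. On each shell $\{a_{n+1}\le|N_r|\le a_n\}$ one has $\log^2|N_r|\le C(n+1)^2$, and Lemma~\ref{timespent} yields, after absorbing the factor $((1-r)/2)^{4/5}$ into the constant,
\begin{equation*}
\E\!\left[\int_s^t\mathbf{1}_{\{a_{n+1}\le|N_r|\le a_n\}}\,dr\,\Big|\,H_s,K_s\right]\le (t-s)\wedge C_M r^{4n/5}.
\end{equation*}
Summing these estimates, and writing $h=t-s$, yields
\begin{equation*}
\E\!\left[\int_s^t\mathbf{1}_{\{|N_r|\le 1/2\}}\log^2|N_r|\,dr\,\Big|\,H_s,K_s\right]\le C_M\sum_{n=0}^{\infty}(n+1)^2\bigl[h\wedge r^{4n/5}\bigr].
\end{equation*}

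The remaining task is to estimate this series. Let $n^\star$ be the smallest integer $n$ with $r^{4n/5}\le h$, so that $n^\star$ is of order $\log(1/h)$. For $n\le n^\star$ use the bound $h$: this contributes at most $Ch(n^\star)^3\le C'h\log^3(1/h)$. For $n>n^\star$ use $r^{4n/5}=h\cdot r^{4(n-n^\star)/5}$ together with $(n+1)^2\le 2(n^\star)^2+2(n-n^\star)^2$; the geometric factor makes the tail summable and the contribution is at most $Ch(n^\star)^2\le C'h\log^2(1/h)$. Hence the full sum is controlled by $C_M h\log^3(1/h)$.

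Finally, the function $h\mapsto h^{3/4}\log^3(1/h)$ is bounded on $(0,1]$, so $h\log^3(1/h)\le C h^{1/4}$ for all $h\in(0,1]$, which delivers the claimed bound. The main obstacle is really a bookkeeping one: one has to line up the quadratic-logarithmic weight $\log^2|N_r|$ against the H\"older-type exponent $4/5$ from Lemma~\ref{timespent} so that a positive power of $t-s$ survives after the dyadic summation. There is actually plenty of room: any exponent strictly less than $1$ would do, and this slack is what will later allow Novikov's criterion to be invoked through Lemma~\ref{Por1.1} with $\kappa=0$.
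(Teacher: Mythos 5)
Your argument is correct and follows the same overall strategy as the paper's: slice $\{|N_r|\le 1/2\}$ into shells, estimate the occupation of each shell via Lemma~\ref{timespent}, and split the resulting series at the crossover between the two bounds in that lemma. The one genuine difference is the choice of shells: the paper uses the harmonic partition $\{(k+1)^{-1}\le|N_r|\le k^{-1}\}$, $k\ge 2$, for which $b-a\le k^{-2}$ and $\log^2|N_r|\le\log^2(k+1)$, and splits at $N\sim(t-s)^{-5/8}$; you use geometric shells $[r^{n+1}/2,\,r^n/2]$ so that the logarithmic weight becomes a clean polynomial $(n+1)^2$ and the crossover index is $n^\star\sim\log(1/h)$. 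Your parametrization is a little tidier and actually yields the sharper intermediate bound $O(h\log^3(1/h))$ before you relax it to $C_M h^{1/4}$, whereas the paper lands directly on $h^{1/4}$; both are more than enough for the application to Lemma~\ref{Por1.1}. One small bookkeeping slip: when $h$ is near $1$ the index $n^\star$ can be $0$ or $1$, in which case $h(n^\star)^3$ (and $h\log^3(1/h)$) vanishes or is tiny while the series itself is a bounded positive constant; you should either write $(n^\star+1)^3$ in place of $(n^\star)^3$, or observe that for $h$ bounded away from $0$ the sum $\sum_n(n+1)^2 r^{4n/5}$ is an absolute constant and the claimed bound $C_M h^{1/4}$ holds trivially by enlarging $C_M$. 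With that adjustment the proof is complete.
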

\bpf
In this proof, $C$ stands for a constant which may vary on each line.
\begin{align*}
\E&\left[\int_s^t{\bf1}_{\{|N_r|\le1/2\}}\log^2|N_r|\,dr\Big|H_s,K_s\right]\\ &\qquad=
\sum_{k=2}^\infty\E\left[\int_s^t{\bf1}_{\{(k+1)^{-1}\le |N_r|\le
    k^{-1}\}}\log^2 |N_r|\, dr\Big|H_s,K_s\right]\\
&\qquad\le\sum_{k=2}^\infty\log^2(k+1)\E\left[\int_s^t{\bf1}_{\{(k+1)^{-1}\le |N_r|\le k^{-1}\}}dr\Big|H_s,K_s\right]\\
&\qquad\le C \sum_{k=2}^\infty\log^2(k+1)\left(\frac{1}{k^{8/5}}\wedge (t-s)\right)\\
&\qquad\le C(t-s)\sum_{k=2}^{N(s,t)} \log^2(k+1)+C\sum_{k=N(s,t)}^\infty\log^2(k+1)
\frac{1}{k^{8/5}},
\end{align*}
where $N(s,t)=[(t-s)^{-5/8}]$, and we have used for the second inequality Lemma \ref{timespent} and $k^{-1}-(k+1)^{-1}\le k^{-2}$. For some constant $C$, for all $k\ge2$,
$$\log^2(k+1)\le C k^{1/5},$$
hence the second term in the last right--hand side is bounded by
$$C\sum_{k=N(s,t)}^\infty k^{-7/5}\le C(t-s)^{1/4}.$$
Now the first term is bounded by
$$C(t-s)^{3/8}\log^2((t-s)^{-5/8})\le C(t-s)^{1/4}.$$
The claimed result is proved. \epf

 We are now in a position to establish the desired uniqueness result.
  \begin{theorem}\label{uniqueUV}
  Equation \eqref{UV} has a unique solution which spends zero time on
  the diagonal. Furthermore the whole sequence $(U^\epsilon,V^\epsilon)$
  converges weakly to this solution as $\epsilon\to0$.
  \end{theorem}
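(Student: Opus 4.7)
The plan is to establish uniqueness in law for \eqref{UV} within the class of solutions that spend zero time on the diagonal, and then to combine this with the tightness and subsequential-limit characterizations from Section~\ref{sec8} to upgrade to convergence of the full sequence.

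First I would pass from \eqref{UV} to the time-changed system \eqref{HKeq}. Any solution of \eqref{UV} starting with $U_0, V_0 > 0$ stays in $(0,\infty)^2$ by Theorem~\ref{th:uniq}, and the Itô computation with the logarithmic test function used in the proof of Theorem~\ref{accumul} applies essentially verbatim to such a solution, showing it too spends zero time on the diagonal. Consequently the time change \eqref{tc} is a strictly increasing homeomorphism, and the correspondence between weak solutions of \eqref{UV} and \eqref{HKeq} is bijective and law-preserving. Since uniqueness is a local property, I fix $M > 0$ and consider the process up to the exit time from $\{M^{-1} \le h,k \le M\}$; modifying the coefficients outside as described in the paragraph preceding \eqref{HKSimple}, the equation becomes \eqref{HKSimple} with $f_1, f_2$ and $g^{\pm 1}$ bounded and with a continuous, uniformly elliptic diffusion matrix.

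The approach is now a Girsanov transformation: verify that the drift has enough exponential integrability for Novikov's criterion, which then reduces \eqref{HKSimple} to its driftless version, whose uniqueness in law is classical (Stroock--Varadhan for continuous, uniformly elliptic coefficients). The hard part is the exponential moment estimate, since the drift blows up as $|h-k| \to 0$. Since $f_1, f_2, g^{-1}$ are bounded on the localized region, Novikov reduces to
\begin{align*}
\E \exp\!\left(\lambda \int_0^T F^{-2}(H_t, K_t)\, dt\right) < \infty \quad \text{for every } \lambda > 0.
\end{align*}
Combining Lemma~\ref{tric} with the asymptotics of $\Lambda$ in Proposition~\ref{prop:Lambda} yields, on the localized region, the pointwise bound
\begin{align*}
F^{-2}(h,k) \;\lesssim\; 1 + \log^2|h-k|\,\mathbf{1}_{\{|h-k|\le 1/2\}},
\end{align*}
so it is enough to bound exponential moments of $\int_0^T \mathbf{1}_{\{|N_t|\le 1/2\}}\log^2|N_t|\,dt$. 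This is where Lemma~\ref{port} is crucial: its conclusion furnishes the choice $\rho(s,t) = C_M (t-s)^{1/4}$, uniformly over starting positions in the compact set, which satisfies hypotheses (i)--(iii) of Lemma~\ref{Por1.1} with $\kappa = 0$. Hence the exponential moment is finite for every $\lambda > 0$, and Novikov's criterion holds.

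With Novikov in hand, Girsanov's theorem identifies the law of $(H,K)$ under \eqref{HKSimple} with one that is mutually absolutely continuous with respect to the law of the driftless SDE obtained by deleting the $f_i/F$ terms; the latter has a unique weak solution by Stroock--Varadhan, and absolute continuity transfers uniqueness in law back to \eqref{HKSimple}. Sending $M \to \infty$ yields uniqueness in law for \eqref{HKeq}, and the inverse time change then gives uniqueness in law for \eqref{UV} within the class of solutions spending zero time on the diagonal. For the final assertion, Section~\ref{sec8} supplies both the tightness of $\{(U^\epsilon, V^\epsilon)\}_{\epsilon > 0}$ and the identification of each subsequential weak limit as a solution of \eqref{UV}, while Theorem~\ref{accumul} places every such limit in the uniqueness class just established. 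A standard Prokhorov argument then promotes subsequential convergence to weak convergence of the entire family as $\epsilon \to 0$.
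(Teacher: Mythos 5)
Your proposal follows the same route as the paper's proof: time-change \eqref{UV} to \eqref{HKeq} via \eqref{tc}, localize and use Lemmas~\ref{port} and~\ref{Por1.1} (with $\kappa=0$) to verify Novikov's criterion and perform a Girsanov reduction to the driftless SDE, invoke Stroock--Varadhan for the latter, reverse the time change, and finally combine with Theorem~\ref{accumul} and tightness to upgrade to convergence of the whole family. This is essentially the paper's argument, and the main chain of reasoning is correct.

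One side remark you make is not sound, though it is also not needed. You claim the log-test-function computation from Theorem~\ref{accumul} applies ``essentially verbatim'' to an arbitrary solution of \eqref{UV} to show it spends zero time on the diagonal. That computation actually hinges on positivity of $\sigma_1^2(\tilde X^{\epsilon_n}_s)^2+\sigma_2^2(\tilde Y^{\epsilon_n}_s)^2$ $(ds\times d\P)$-a.e.\ for the \emph{pre-limit} process, which follows from hypoellipticity and fails for the limiting equation: the quadratic variation of $J_t=U_t-V_t$ vanishes identically on $\{U=V\}$ since $U-\Gamma(U,V)=V-\Gamma(U,V)=0$ there. Repeating the computation directly on $(U,V)$ and letting $M\to\infty$ only controls $\E\int_0^t \one_{\{0<|J_s|<\delta\}}\,ds$, not the occupation time of $\{J_s=0\}$ itself. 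Indeed, when $\sigma_1=\sigma_2$ the drift of $J$ on the diagonal is zero and the diffusion coefficient vanishes, so $J\equiv0$ is a consistent behaviour once the diagonal is hit; ruling out such solutions is exactly why the theorem restricts to the class of solutions spending zero time on the diagonal rather than proving that all solutions belong to it. Since your proof only ever needs uniqueness within that class (and Theorem~\ref{accumul} places all subsequential limits there), simply drop the ``applies verbatim'' sentence. Also, when you invert the time change you should note, as the paper does, that $A'_t=\int_0^t F^{-1}(H_s,K_s)\,ds<\infty$ a.s.\ (a first-moment version of Lemma~\ref{port} suffices), so that the reconstruction of $(U,V)$ from $(H,K)$ is well defined for all finite times.
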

\begin{proof}
Consider any solution of \eqref{UV} which spends zero time on the diagonal. The time--change
defined by \eqref{tc} transforms this process into a solution $(H_t,K_t)$ of \eqref{HKeq}.

Lemma \ref{port} combined with Lemma \ref{Por1.1} shows that, locally in the open positive quadrant,
the law of that solution to \eqref{HKeq} is absolutely continuous
with respect to that of the same SDE, but without drift. That last equation has a unique
weak solution, according to Theorem 7.2.1 in \cite{SV}. Now any solution to \eqref{HKeq}
coincides with the one constructed via Girsanov's theorem in Theorem 1.1 of \cite {Por}
whose assumptions clearly hold in our case
(alternatively, Girsanov's theorem could also be used thanks to a simplified version of
Lemma \ref{port} together with Lemma \ref{Por1.1} again).

Now that we have a unique weak solution $(H_t,K_t)$ to \eqref{HKeq}, we can time--change
it back to the original $(H_t,K_t)$, i.e.
defining
$A'_t=\int_0^t\frac{ds}{F(H_s,K_s)}$, $\eta'_t=\inf\{s>0,\ A_s>t\}$, $U_t=H_{\eta'_t}$, $V_t=K_{\eta'_t}$.
Indeed a weaker version (without the power 2 of the logarithm) of Lemma 6.9 shows that
$A'_t<\infty$, for all $t<\infty$. Clearly $A'_t\to\infty$, as $t\to\infty$. So this defines
$(U_t,V_t)$ for all $t\ge0$, and  this process coincides with the arbitrary solution which spends
zero time on the diagonal. But the law of that process is uniquely characterized as being
the time--change of the unique weak solution of \eqref{HKeq}.

Finally, it follows from this conclusion and Theorem \ref{accumul} that all accumulation points
of the collection $\{(U^\epsilon,V^\epsilon),\ \epsilon>0\}$
have the same law, hence the whole sequence converges. This proves the stated result.
\end{proof}

From now on $(U_t,V_t)$ will always refer to the process whose law has just been uniquely characterized.

Since the martingale problem associated to \eqref{HKeq} is well posed, $(H_t,K_t)$ is a Markov process,
and from Theorem 6.3 in \cite{Var} (this theorem is stated in dimension 1, but exactly the same argument works in our case),
so is its time--change $(U_t,V_t)$. We call $Q_t$ the Markov semigroup associated to that process, which is defined
 for $\phi\colon \R^2_+ \rightarrow \R$ by
\begin{align}\label{MarkovQ}
  (Q_t\phi)(u,v)=\E_{(u,v)}\big[\phi(U_t,V_t)\big].
\end{align}

\begin{remark}
  We note that in both cases $\sigma_1=\sigma_2$ and
  $\sigma_1\not=\sigma_2$, the law of the uniquely characterized
  solution of \eqref{UV} and that of a non--degenerate SDE in the
  quadrant $\R_+\times\R_+$ are equivalent.

This is in sharp contrast with the result one would get if the
diffusion coefficient would degenerate in a more regular way on the
diagonal (e.g. it would be Lipschitz). In the latter case, in the case
$\sigma_1=\sigma_2$, the solution would stay on the diagonal once it
has hit it. In the case $\sigma_1\not=\sigma_2$, the solution would
stay in the set $u\ge v$ or $v\ge u$, depending upon the sign of
$\sigma^2_1-\sigma^2_2$, after having hit once the diagonal.

Often the period around an orbit diverges like $1/\log(\rho)$ while
approaching a heteroclinic cycle or a homoclinic orbit. Here $\rho$ is
the distance from the limiting orbit. This often leads to
coefficients which vanish  very slowly. This is the situation in our
setting.  Hence while it may seem esoteric at first,   in fact it is
likely to be generic in many settings.

\end{remark}

\subsection{Longtime behavior of $(U,V)$}

Unlike the pair $(U_t^\epsilon,V_t^\epsilon)$, the pair $(U_t,V_t)$
constructed from $(H_t,K_t)$ in the previous section form a Markov
process. Hence, we can speak of an invariant probability measure
for the Markov semigroup $Q_t$.

Observe that
\begin{align*}
  d( U_t +V_t) = \big[ a  - 2(U_t+V_t)\big] dt + dM_t
\end{align*}
where $a>0$. $M_t$ is a continuous local Martingale satisfying
\begin{align*}
  d\langle M \rangle_t \leq c (U_t + V_t) dt
\end{align*}
for some positive $c$ (recall that $\Gamma(u,v)\le u\wedge v$). Hence the following result follows from
Lemma~\ref{l:boundingLemma}.
\begin{proposition}\label{l:boundedUV} For any $p \geq 1$, there exists a constant $C(p)$
  so that
  \begin{align*}
  \sup_{t \geq0} \E \big[ U^p_t + V_t^p] \leq C(p) [1 +    U^p_0+ V_0^p]\,.
  \end{align*}
  \end{proposition}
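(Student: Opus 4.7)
The plan is to reduce the statement to a direct application of Lemma \ref{l:boundingLemma} applied to the scalar process $X_t := U_t + V_t$, following essentially the strategy already used for $|\xi_t^\epsilon|^2$ in Proposition \ref{prop:energy}.

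First I would verify the hypotheses of Lemma \ref{l:boundingLemma}. The SDE just displayed for $U_t + V_t$ has the form $dX_t = (a - bX_t)dt + dM_t$ with $a = 2\sigma_1^2 + 2\sigma_2^2 > 0$ and $b = 2 > 0$. For the quadratic variation bound, from \eqref{UV} one computes
\begin{align*}
d\langle M\rangle_t = 8\sigma_1^2(U_t - \Gamma(U_t,V_t))\,dt + 8\sigma_2^2(V_t - \Gamma(U_t,V_t))\,dt,
\end{align*}
and since $\Gamma(U_t,V_t) \geq 0$, this is bounded by $8(\sigma_1^2 \vee \sigma_2^2)(U_t + V_t)\,dt = c X_t\,dt$ with $c := 8(\sigma_1^2 \vee \sigma_2^2)$. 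Non-negativity $X_t \geq 0$ follows from Theorem \ref{th:uniq}, which guarantees that $U_t, V_t > 0$ for all $t \geq 0$.

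Next I would apply Lemma \ref{l:boundingLemma} with the constants $a$, $b$, $c$ identified above to obtain, for each integer $p \geq 1$,
\begin{align*}
\E\big[(U_t + V_t)^p\big] \leq C(p)\Big[1 + \sum_{k=1}^p e^{-2kt}(U_0 + V_0)^k\Big].
\end{align*}
Bounding $e^{-2kt} \leq 1$ gives a bound uniform in $t \geq 0$. Using the elementary estimate $x^k \leq 1 + x^p$ valid for $x \geq 0$ and $1 \leq k \leq p$, together with $(U_0 + V_0)^p \leq 2^{p-1}(U_0^p + V_0^p)$, the right--hand side is dominated by $C'(p)\bigl[1 + U_0^p + V_0^p\bigr]$ after enlarging the constant.

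Finally, since $U_t, V_t \geq 0$ and $p \geq 1$, we have the pointwise inequality $U_t^p + V_t^p \leq (U_t + V_t)^p$, so
\begin{align*}
\sup_{t \geq 0}\E\bigl[U_t^p + V_t^p\bigr] \leq \sup_{t \geq 0} \E\bigl[(U_t + V_t)^p\bigr] \leq C(p)\bigl[1 + U_0^p + V_0^p\bigr],
\end{align*}
which is the claimed bound. There is no real obstacle here; the only thing requiring mild attention is the sign condition $\Gamma \geq 0$ (to bound the quadratic variation) and the convex combination of $U_0^p, V_0^p$ on the right--hand side, both of which are immediate.
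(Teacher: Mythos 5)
Your proposal is correct and matches the paper's approach exactly: the paper likewise observes that $U_t+V_t$ satisfies the hypotheses of Lemma~\ref{l:boundingLemma} (with the quadratic variation bound following from $\Gamma\geq0$ and the well-definedness from $\Gamma(u,v)\leq u\wedge v$) and then invokes that lemma. The only details you spell out beyond the paper are the elementary reductions $U_t^p+V_t^p\leq(U_t+V_t)^p$ and $(U_0+V_0)^p\lesssim U_0^p+V_0^p$, which the paper leaves implicit.
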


We shall also need the an analogous result for the $(H,K)$ process. We
could prove stronger results
but the following will be sufficient for our purposes.
\begin{lemma}\label{estimH}
  \begin{equation*}
    \sup_{t\ge0}\E\left(|H_t|^2+|K_t|^2\right)<\infty\,.
  \end{equation*}
\end{lemma}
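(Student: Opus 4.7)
\medskip
\noindent\textbf{Proof proposal.} Since $H_t, K_t \ge 0$, it suffices to bound $\E S_t^2$ uniformly in $t$, where $S_t := H_t + K_t$. Summing the two components of \eqref{HKeq} yields
\[ dS_t = \frac{2(\sigma_*^2 - S_t)}{F(H_t, K_t)}\,dt + dM_t, \qquad \sigma_*^2 := \sigma_1^2 + \sigma_2^2, \]
with $M_t$ a continuous local martingale. My first step is to show $d\langle M\rangle_t / dt \le C S_t$; the only non-obvious piece is $|H-K|/F(H,K)$, which is bounded by $C(H \vee K) \le CS_t$ thanks to the asymptotic $F \sim 2/|\log(|H-K|/(H\vee K))|$ from Proposition~\ref{prop:Lambda} and the boundedness of $x|\log x|$ on $[0,1]$.

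The drift of $S_t$ is singular on the diagonal (where $F\to 0$), and the naive Lyapunov function $s^2$ fails because its generator is $+\infty$ in the region $\{S < \sigma_*^2\}$ near the diagonal. The key idea is to take the Lyapunov function
\[ \phi(s) := (s - 2\sigma_*^2)_+^2, \]
which is $C^1$ with absolutely continuous derivative, so It\^o's formula applies (after a smooth approximation, if one wishes to avoid appealing directly to the It\^o-Tanaka formula). The essential feature is that $\phi'$ and $\phi''$ vanish identically on $\{s \le 2\sigma_*^2\}$, so the singular drift contributes zero on the part of state space where it would be destabilizing. On the complementary region, the singular drift is helpful: setting $c_0 := 1/F_{\max}$ with $F_{\max} := 1 - \Lambda(1/2)$ and using $(S - \sigma_*^2) \ge (S - 2\sigma_*^2) \ge 0$,
\[ \phi'(S_t) \cdot \frac{2(\sigma_*^2 - S_t)}{F(H_t, K_t)} \le -4 c_0\, \phi(S_t), \]
and the It\^o correction $\tfrac{1}{2}\phi''(S_t) d\langle S\rangle_t / dt \le C S_t\, \mathbf{1}_{S_t > 2\sigma_*^2}$ is absorbed by writing $S_t \le \sqrt{\phi(S_t)} + 2\sigma_*^2$ and applying Young's inequality. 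This produces the Lyapunov estimate
\[ (L\phi)(S_t) \le -2 c_0\, \phi(S_t) + C' \]
almost everywhere, with $C'$ independent of $t$.

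To conclude I apply It\^o to $e^{2c_0 t}\phi(S_t)$, localize via $\tau_M := \inf\{t : S_t \ge M\}$ to kill the martingale part in expectation, take expectations, and let $M \to \infty$ (Fatou's lemma; non-explosion of $(H, K)$ is guaranteed by Theorem~\ref{uniqueUV}). This yields $\E \phi(S_t) \le \phi(S_0)e^{-2c_0 t} + C'/(2c_0)$ uniformly in $t$, and since $s^2 \le 2\phi(s) + 8\sigma_*^4$ for all $s \ge 0$, we conclude $\sup_{t\ge 0}\E S_t^2 < \infty$, and \emph{a fortiori} $\sup_{t\ge 0}\E(H_t^2 + K_t^2) < \infty$.

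The principal obstacle throughout is the singular behavior of the coefficients of \eqref{HKeq} on the diagonal; the trick is the design of $\phi$, engineered to vanish precisely on the part of state space where the singular drift would destabilize a naive Lyapunov function, while remaining active where it provides strong mean reversion.
\medskip
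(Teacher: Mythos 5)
Your proof is correct and rests on the same key device as the paper's: a Lyapunov function of the form $(\,\cdot\,-\mathrm{const})_+^2$, which vanishes identically on the region where the singular drift would destabilize a naive quadratic Lyapunov function and reaps the strong mean reversion where the drift is helpful. The paper's one-line argument applies this per coordinate, taking $G_t = [(H_t-\sigma_1^2)_+]^2$ (which works since $d\langle H\rangle_t/dt \lesssim H_t$, by the same $x|\log x|$ bound you invoke), whereas you aggregate into $S_t = H_t+K_t$ and center at $2\sigma_*^2$; this is a cosmetic difference, and the rest of your argument (Young's inequality to absorb the It\^o correction, localization, Fatou) is the standard completion the paper leaves implicit.
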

\begin{proof}
It suffices to treat $H_t$. Defining $G_t=[(H_t-\sigma_1^2)_+]^2$, we note that
$$\frac{d}{dt}\E(G^2_t)\le a-b\E[G_t^2],$$
for some positive constants $a$ and $b$. The result follows readily.
\end{proof}

\begin{lemma}\label{integrable} The process $(H,K)$ possesses a unique
  invariant probability measure $\nu$. Additionally, $\nu$ has a
  denisty which is
  everywhere positive in the interior of $(0,\infty) \times(0,\times)$
  and
  \begin{align*}
    \frac{1}F\in L^1(\nu).
  \end{align*}
where $F$ was defined in \eqref{eqF} and was  used in the time change
between $(U,V)$ and $(H,K)$.
\end{lemma}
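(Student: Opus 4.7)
The plan is to invoke Krylov--Bogolyubov for existence, a Girsanov-plus-ellipticity argument for uniqueness and positive density, and a time-integral bound built on Lemma \ref{port} for the $L^1(\nu)$ statement.

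For \textbf{existence}, I would establish tightness of the empirical time averages $\bar\nu_T=\frac{1}{T}\int_0^T\mathrm{Law}(H_t,K_t)\,dt$ on compact subsets of $(0,\infty)^2$. Lemma \ref{estimH} prevents escape to infinity. To prevent mass from escaping to the axes, I would work on the region $\{H\wedge K\le \tfrac12(H\vee K)\}$, where both $F$ and $G$ are bounded above and below away from zero: there the $H$-equation has drift of order $\sigma_1^2$ near $H=0$ and squared diffusion of order $H$, so the comparison argument of Lemma \ref{le:posit} yields non-attainability of $\{H=0\}$ together with a uniform-in-$t$ bound on $\E|\log H_t|^q$ for some $q>0$ (and symmetrically for $K$). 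Together these give tightness on compacts in $(0,\infty)^2$, and Krylov--Bogolyubov produces an invariant probability measure $\nu$.

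For \textbf{uniqueness and positivity of the density}, I would mimic the proof of Theorem \ref{invMeasureEpsPos}. On any compact subset of $(0,\infty)^2$, the Girsanov reduction of Theorem \ref{uniqueUV} shows that the law of $(H,K)$ is equivalent to that of the driftless version of \eqref{HKeq}, whose diffusion matrix is uniformly elliptic and bounded on such a set, hence with a smooth strictly positive transition density. Consequently $(H,K)$ has a jointly continuous, everywhere positive transition density on $(0,\infty)^2$; this gives irreducibility, and two distinct ergodic invariant measures would need mutually singular supports, which is impossible for measures charging every open set.

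For \textbf{$1/F\in L^1(\nu)$}, I would combine the estimate
\[
\frac{1}{F(h,k)}\lesssim (h\vee k)+|\log|h-k||\,\mathbf{1}_{\{|h-k|\le 1\}}
\]
recalled in the discussion preceding Lemma \ref{port} with a version of that lemma in which $\log^2$ is replaced by $\log$; the same proof gives such a variant with a slightly better exponent in $(t-s)$. This yields a uniform bound $\E_{(h_0,k_0)}\big[\int_s^t F(H_r,K_r)^{-1}dr\big]\le C_M(t-s)$ for $(h_0,k_0)$ in a compact subset of $(0,\infty)^2$. Starting from such $(h_0,k_0)$, weak convergence of $\bar\nu_T$ to $\nu$ applied to the bounded continuous function $(1/F)\wedge N$, together with monotone convergence in $N$, gives $\int(1/F)\,d\nu<\infty$. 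The chief obstacle I anticipate is the apparent circularity here, since the drift in \eqref{HKeq} is itself of order $1/F$; it is resolved by the fact that Lemma \ref{port} is a purely conditional pathwise estimate based on the non-degeneracy of $\overline g$ and the rough pointwise control of the drift, so no a priori integrability of $1/F$ under $\nu$ is needed.
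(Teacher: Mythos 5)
Your proposal follows the same three\mbox{-}part architecture as the paper: Krylov--Bogolyubov with the moment bound of Lemma \ref{estimH} for existence, Girsanov plus local ellipticity for uniqueness and positivity of the density, and a uniform\mbox{-}in\mbox{-}$t$ expectation bound on $\int_0^t F^{-1}\,ds$ combined with an ergodic/law\mbox{-}of\mbox{-}large\mbox{-}numbers step for $1/F\in L^1(\nu)$. You also correctly identify and resolve the potential circularity: the drift in \eqref{HKeq} is of order $1/F$, but the estimates of Lemma \ref{port} type are purely pathwise/conditional and do not presuppose $1/F\in L^1(\nu)$. That last observation is exactly right.

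Where you genuinely diverge is in establishing $\E\int_0^t F^{-1}\,ds\le Ct$. The paper does this in one shot: it applies It\^o's formula to $\varphi(N_t)$ with $\varphi''(x)=(-\log|x|)_+$; the quadratic\mbox{-}variation term produces precisely the quantity to be bounded, the drift contribution is controlled because $F^{-1}(H,K)\,\varphi'(N)$ is bounded (on the truncated region, combined with Lemma \ref{estimH}), and the boundary terms $\E\,\varphi(N_t)$ are controlled by Lemma \ref{estimH}. You propose instead to re-run Lemma \ref{port} with $\log$ in place of $\log^2$. This is a legitimate alternative, but the assertion that ``this yields a uniform bound $\E_{(h_0,k_0)}[\int_s^tF^{-1}dr]\le C_M(t-s)$'' conceals a step: the $\log$\mbox{-}variant only gives a \emph{sublinear} conditional bound $\le C_M(t-s)^\alpha$, $\alpha<1$, on intervals of length $\le1$, and only on the event $H_s\vee K_s\le M$. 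To reach a linear\mbox{-}in\mbox{-}$t$ global bound one must chain unit\mbox{-}length intervals via the Markov property, track the (linear) dependence of $C_M$ on $M$ coming from the proof of Lemma \ref{timespent}, and absorb the event $\{H_k\vee K_k>M\}$ using the second\mbox{-}moment bound of Lemma \ref{estimH}. That bookkeeping is doable, and would make your route work, but it is not spelled out; the paper's direct It\^o computation avoids it entirely.

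A secondary gap is in your existence step: the claim that Lemma \ref{le:posit} yields a uniform\mbox{-}in\mbox{-}$t$ bound on $\E|\log H_t|^q$ does not follow from that lemma, which is a pathwise non\mbox{-}attainability argument and produces no moment estimate. A separate Lyapunov\mbox{-}type computation near the axes would be needed to make this precise. The paper instead handles the boundary at zero a posteriori, via the positivity of the transition density in the open quadrant, to conclude that any invariant measure charges no mass on the axes.
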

\begin{proof}[Proof of Lemma~\ref{integrable}]
Lemma~\ref{estimH} gives the needed tightness of the averaged
transition densities of $(H,K)$ to employ the Krylov--Bogolyubov theorem to
show the existence of an invariant measure (just as we did in the
proof of Corollary~\ref{cor:xiInvMeasure}).
 Since $(H,K)$ was  obtained by a Girsanov transformation from a non degenerate diffusion with zero drift,
its transition probabilities have a density which is positive in the
open positive quadrant. This immediately implies that any invariant
measure must have a density which respect to Lebesgue which is
positive in the
open positive quadrant. This in turn implies that there can only be
one invariant probability measure which we will henceforth denote by $\nu$.

The Birkoff ergodic theorem and the fact that $\nu$ and any transition
probability have densities which are positive in the  open positive quadrant
imply that for any initial distribution and any $\phi \in L^1(\nu)$
\begin{align}\label{simpleLLN}
  \frac1t \int_0^t \phi(H_s,K_s) ds  \rightarrow \int \phi d\nu
  \quad\text{a.s}\quad\text{as} \quad t \rightarrow \infty.
\end{align}

It is not hard to see that whether $\int (1/F)d\nu<\infty$ or $=\infty$, in both cases, as $t\to\infty$,
\begin{equation*}
  \frac{1}{t}\int_0^t\frac{ds}{F(H_s,K_s)}\to \int \frac1F \,d\nu\quad\text{a.s.}
\end{equation*}
The case  $\int (1/F)d\nu=\infty$ follows as follows. For any $M>0$,
$$\int\left(\frac{1}{F}\wedge M\right)d\nu\le\liminf_{t\to\infty}\frac{1}{t}\int_0^t\frac{ds}{F(H_s,K_s)},$$
which
implies by monotone convergence that
$$+\infty\le\liminf_{t\to\infty}\frac{1}{t}\int_0^t\frac{ds}{F(H_s,K_s)},$$
hence the result.

Consequently, in order to prove that $1/F\in L^1(\nu)$,  as a consequence of Fatou's Lemma,
all we have to show is that  there exists $C>0$ and $T>0$ such that for all $t\ge T$,
\begin{equation}\label{bound1/F}
\E\int_0^t \frac{ds}{F(H_s,K_s)}\le Ct.
\end{equation}

It follows from Lemma \ref{estimH} and Lemma \ref{tric} that all \eqref{bound1/F} will follow
from the fact that for some constant $C$ and all $t\ge1$,
\begin{align*}
  \int_0^t\E\ \left[\big|\log|N_s|\big|{\bf1}_{\{|N_s|\le1\}}\right]ds\le Ct.
\end{align*}
Consider the function $\varphi\in C^1(\R)\cap W^{2,\infty}(\R)$ defined as
\begin{align*}
  \varphi(x)=\begin{cases}
\frac{7}{4}-x,&\text{if $x<-1$;}\\
\frac{x^2}{4}(3-2\log(|x|)),&\text{if $-1\le x\le1$;}\\
x-\frac{1}{4},&\text{if $x>1$.} \end{cases}
\end{align*}
We then have
$$\varphi'(x)=\begin{cases}
-1,&\text{if $x<-1$;}\\
x(1-\log(|x|)),&\text{if $-1\le x\le1$;}\\
1,&\text{if $x>1$;} \end{cases}$$
and
$$\varphi''(x)=(-\log(|x|)_+.$$
We now deduce from It\^o's formula
\begin{multline*}
\E\int_0^t\big|\log|N_s|\big|{\bf1}_{\{|N_s|\le1\}}\overline{g}(H_s,K_s)ds=\\
2\E[\varphi(N_t)-\varphi(N_0)]
-2\E\int_0^t\frac{\varphi'(N_s)}{F(H_s,K_s)}[f_1(H_s)-f_2(K_s)]ds.
\end{multline*}
The result now follows easily from Lemma \ref{estimH} and the fact that the process
$F^{-1}(H_s,K_s)\varphi'(N_s)$ is bounded.
\end{proof}

  \begin{theorem}\label{thm:invMeasureUV}
    The semigroup $Q_t$ defined in the previous section possesses a
    unique invariant probability measure $\lambda$. Furthermore,
    $\lambda(du, dv)=\rho(u, v)\,du\,dv$ where $\rho$ is continuous
    away from the diagonal and positive in the positive open
    quadrant. Lastly $\rho(u,v) \rightarrow \infty$, as $ |u-v|
    \rightarrow 0$ in the positive quadrant.
  \end{theorem}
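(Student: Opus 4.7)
The plan is to transport the known invariant measure $\nu$ of $(H,K)$ from Lemma~\ref{integrable} along the time change that produces $(U,V)$. Recall from the proof of Theorem~\ref{uniqueUV} that $U_t=H_{\eta'_t}$, $V_t=K_{\eta'_t}$, where $\eta'$ is the inverse of $A'_t=\int_0^t ds/F(H_s,K_s)$. The candidate invariant probability measure for $Q_t$ is
\begin{align*}
\lambda(dh,dk) \eqdef \frac{1}{Z}\,\frac{\nu(dh,dk)}{F(h,k)}, \qquad Z \eqdef \int \frac{d\nu}{F},
\end{align*}
which is well-defined as a probability measure precisely because $Z<\infty$, this being the content of Lemma~\ref{integrable}.

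To identify $\lambda$ as the unique invariant measure of $Q_t$, I would combine the time-change identity with Birkhoff's ergodic theorem for the ergodic Markov process $(H,K)$. The substitution $s=A'_r$ in the integral $\int_0^t\phi(U_s,V_s)\,ds$ gives, for any bounded Borel $\phi$,
\begin{align*}
\frac{1}{t}\int_0^t\phi(U_s,V_s)\,ds = \frac{\eta'_t}{t}\cdot\frac{1}{\eta'_t}\int_0^{\eta'_t}\frac{\phi(H_r,K_r)}{F(H_r,K_r)}\,dr.
\end{align*}
Applying Birkhoff to the right-hand integrand (using $\phi$ bounded and $1/F\in L^1(\nu)$), and applying it to $1/F$ separately to obtain $\eta'_t/t\to Z^{-1}$ from the relation $t=A'(\eta'_t)$, I get $t^{-1}\int_0^t\phi(U_s,V_s)\,ds\to\int\phi\,d\lambda$ almost surely for every starting point in the open quadrant. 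Standard arguments then yield both $\lambda Q_t=\lambda$ and uniqueness: any $Q_t$-invariant probability measure must, by Theorem~\ref{accumul} applied to the stationary solution, give zero mass to the diagonal, so the reverse time change (with speed $1/F$ integrable against such a measure) produces an invariant probability measure for $(H,K)$, which by Lemma~\ref{integrable} must coincide with $\nu$.

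For the statements about the density, I would start from the fact that $(H,K)$ has a jointly continuous strictly positive transition density $p_t(x,y)$ on the open quadrant (the Girsanov construction in Lemma~\ref{integrable} applied to a locally uniformly elliptic driftless SDE yields such a density). Writing $\pi(y)=\int p_t(x,y)\,\nu(dx)$ for a fixed $t>0$ and using the second-moment bound of Lemma~\ref{estimH} for tightness of $\nu$, dominated convergence gives that $\pi$ is continuous and strictly positive on the open quadrant. Consequently $\rho(u,v) = \pi(u,v)/(Z\,F(u,v))$ is continuous and strictly positive on the open quadrant away from the diagonal, since $F$ is continuous and strictly positive there. The blow-up as $|u-v|\to 0$ follows from Proposition~\ref{prop:Lambda}: with $\varepsilon=1-(u\wedge v)/(u\vee v)$, we have $F(u,v)=1-\Lambda(1-\varepsilon)\sim 2/|\log\varepsilon|\to 0$, while $\pi$ stays bounded below on any region bounded away from the origin and from infinity, so $\rho(u,v)\to+\infty$.

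The main obstacle is the ergodic-theoretic bookkeeping around the time change: one has to justify, in a single stroke, both that $\lambda$ is genuinely $Q_t$-invariant (rather than merely the Cesàro limit of $Q_t$-averages) and that uniqueness transfers back from $(H,K)$. The key point that makes this clean is that Theorem~\ref{accumul} rules out invariant measures charging the diagonal, which is exactly the obstruction to reversing the time change. The smoothness/continuity of $\pi$ is a secondary issue, routine once the jointly continuous transition density is invoked, but it is worth being careful near $\{u=v\}$ where the drift of $(H,K)$ is unbounded and one should restrict to compact subsets of the open quadrant minus the diagonal to extract continuity of $\rho$.
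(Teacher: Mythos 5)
Your construction of the candidate measure is exactly the paper's: $\lambda$ is the image of $\nu$ under the time change, with Radon--Nikodym factor $1/F$ normalized by $Z=\int(1/F)\,d\nu<\infty$ (which is precisely what Lemma~\ref{integrable} provides), and the invariance/identification is obtained via the time-change identity and Birkhoff applied to the ergodic process $(H,K)$. So the overall route coincides with the paper's.

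Where you deviate is in the uniqueness step. The paper argues directly: for any ergodic invariant measure $\lambda'$ of $Q_t$, Birkhoff gives $\frac1t\int_0^t f(U_s,V_s)\,ds\to\int f\,d\lambda'$ along $\lambda'$-typical starts, and comparing with the already-established Ces\`aro limit \eqref{eq:limitlambda}, which holds for \emph{every} initial law, forces $\lambda'=\lambda$. You instead reverse the time change to produce an $(H,K)$-invariant measure and invoke uniqueness of $\nu$. Both routes reach the same conclusion, but two details in yours need tidying. First, the appeal to Theorem~\ref{accumul} is misplaced: that theorem concerns accumulation points of $(U^\epsilon,V^\epsilon)$, whereas the needed fact is that the \emph{canonically chosen} process $(U,V)$ spends zero time on the diagonal by construction (the paper's convention after Theorem~\ref{uniqueUV}), so that under any $Q_t$-stationary law the diagonal has measure zero by Fubini. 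Second, your parenthetical ``speed $1/F$ integrable against such a measure'' is the wrong direction: the time change from $(U,V)$ to $(H,K)$ has speed $F$, which is bounded, so there is no integrability obstruction to the reverse change; the $1/F$ integrability is what is needed in the forward direction and is exactly Lemma~\ref{integrable}.

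On the density statements you are if anything more careful than the paper, which in its proof only records that $\nu$ has a positive density and that $\lambda$ has density $\propto\rho_\nu/F$, leaving continuity and the blow-up implicit. Your route through the Girsanov-constructed transition density of $(H,K)$ to obtain a continuous, strictly positive $\pi$, together with the asymptotics $F(u,v)=1-\Lambda\big(\frac{u\wedge v}{u\vee v}\big)\sim 2/|\log(1-\tfrac{u\wedge v}{u\vee v})|$ from Proposition~\ref{prop:Lambda}, is a sound way to fill that in.
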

  \begin{proof}[Proof of Theorem~\ref{thm:invMeasureUV}]
Let $\rho(u,v)$ be the density of the unique invariant measure $\nu$
as guaranteed by Lemma~\ref{integrable} . The same lemma also states that $\rho$
is positive in the open
positive quadrant.

Now,
 using the notations from the proof of Theorem~\ref{uniqueUV}, for any
 measurable and locally bounded
 $f:(0,\infty)\times(0,\infty) \rightarrow \R$ one has
\begin{align*}
  \int_0^tf(U_s,V_s)ds&=\int_0^t f(H_{\eta'_s},K_{\eta'_s})ds\\
  &=\int_0^{\eta'_t}\frac{f(H_r,K_r)}{F(H_r,K_r)}dr\\
  \frac{1}{t}\int_0^t f(U_s,V_s)ds&=\frac{\eta'_t}{t}\times\frac{1}{\eta'_t}\int_0^{\eta'_t}\frac{f(H_r,K_r)}{F(H_r,K_r)}dr
  \end{align*}
If one assumes that $f$ is bounded then Lemma~\ref{integrable} ensures
that $f/F \in L^1(\nu)$ and that for any initial distribution
  \begin{align*}
    \lim_{t\to\infty}\frac{1}{t}\int_0^t
  f(U_s,V_s)ds&=\Big(\lim_{t\to\infty}\frac{\eta'_t}{t}\Big)\times
  \int \frac{f}{F}\, d\nu
\quad \text{a.s}
  \end{align*}
where the convergence of
$\frac{1}{\eta'_t}\int_0^{\eta'_t}\frac{f(H_r,K_r)}{F(H_r,K_r)}dr$ to
$\int \frac{f}{F}\, d\nu$ is ensured by \eqref{simpleLLN}.
The same computation with $f\equiv1$  permits one to conclude that
\begin{equation*}
  \lim_{t\to\infty}\frac{\eta'_t}{t}=\Big(\int\frac{1}{F}\,d\nu\Big)^{-1}\,.
\end{equation*}
In summary, we have that for any $f$ measurable and bounded, for any initial probability measure,
\begin{align}
  \label{eq:limitlambda}
  \lim_{t\to\infty}\frac{1}{t}\int_0^tf(U_s,V_s)ds=\frac{\int
  \frac{f}{F} \,d\nu}{\int \frac{1}{F}\, d\nu}\,.
\end{align}
This is enough to show that our process $(U_t,V_t)$ has the unique invariant probability measure
\begin{equation*}
  \lambda(du,dv)=\frac{F^{-1}(u,v)\rho(u,v)dudv}{\int_{\R^2_+}F^{-1}(u,v)\rho(u,v)dudv}\,.
\end{equation*}
Indeed, to see uniqueness, let $\lambda'$ be any ergodic invariant probability
measure for $(U_t,V_t)$. Then for any bounded $f$, the Birkoff ergodic
theorem implies that
\begin{align*}
  \lim_{t \rightarrow \infty}\frac1t \int_0^t f(U_s,V_s)ds = \int f d\lambda' \quad\text{a.s}
\end{align*}
for $\lambda'$-almost every initial $(U_0,V_0)$. Combining this with
\eqref{eq:limitlambda} implies that $\int f d\lambda' = \int f
d\lambda$ for all bounded $f$ which in turn implies
$\lambda=\lambda'$. Since any invariant measure can be decomposed into
ergodic invariant measures the  uniqueness of $\lambda$ is proved.
\end{proof}

\section{The Deterministic Dynamics}\label{sec7}
\label{sec:detDynam}

We now investigate more fully the deterministic dynamics given in
\eqref{epsZero} and  obtained
by  formally setting $\epsilon=0$ in  \eqref{eps}.
As already mentioned, \eqref{epsZero} has two conserved quantities
$(u,v)=\Phi(\xi_0)$ which are constant on any given orbit. If
$\xi_0=(X_0,Y_0,Z_0)$ then $u=2x^2+z^2$ and $v=2y^2+z^2$ give two
independent equations. Since we are working in three dimensions, the
locus of the solutions, which contains the points in the orbits, is a one--dimensional curve.
We undertake this study since the $\frac1\epsilon \BB$ term in \eqref{fast} implies that on the fast
timescale the solution  will make increasingly many turns very near a
deterministic orbit of \eqref{epsZero}, before the stochastic or
dissipative  terms
cause appreciable diffusion or drift from the current deterministic orbit.

\subsection{Structure of orbits}
If $u \neq v$ then the orbit is a simple periodic orbit which is
topologically equivalent to a circle.  In this case, there are two disjoint orbits
which are solutions.
If $u>v$, one such orbit is given by
\begin{align*}
\Gamma_{u,v}^+=
\Big\{ (\sqrt{ \tfrac{u-z^2}2},  \pm\sqrt{\tfrac{v-z^2}2}, z  )  \colon  z\in  [-\sqrt{v},\sqrt{v}]  \Big\},
\end{align*}
and another by
\begin{align*}
\Gamma_{u,v}^-=
\Big\{ (-\sqrt{ \tfrac{u-z^2}2},  \pm\sqrt{\tfrac{v-z^2}2}, z  )  \colon  z\in  [-\sqrt{v},\sqrt{v}]  \Big\} .
\end{align*}
Similarly if $v>u$ then the corresponding orbits are given by
\begin{align*}
\Gamma_{u,v}^+=&
\Big\{ ( \pm\sqrt{\tfrac{u-z^2}2},  \sqrt{\tfrac{v-z^2}2}, z  )  \colon  z\in  [-\sqrt{u},\sqrt{u}]  \Big\}, \\
\Gamma_{u,v}^-=&
\Big\{ (\pm\sqrt{ \tfrac{u-z^2}2}, - \sqrt{\tfrac{v-z^2}2}, z  )  \colon  z\in  [-\sqrt{u},\sqrt{u}]  \Big\}.
\end{align*}

Whether $u>v$ or $v>u$
is enough information to localize a given orbit to one of two orbits on
sphere of radius $\sqrt{(u+v)/2}$. The remaining piece of information is contained in the
sign of the function defined by
\begin{align}\label{defSigma}
\sn(x,y,z)=\sign\left(\one_{|x|> |y|}x+\one_{|y|>|x|}y\right)
\end{align}
The value of $\sn$ corresponds to the sign decorating the
$\Gamma_{u,v}^\pm$. Hence if one starts from the initial condition
$(x,y,z)$ such that the $(u,v)$ computed from these orbits satisfies
$u\neq v$ then the deterministic dynamics will trace the set $\Gamma_{u,v}^\sn$.

The exception to being topologically equivalent to a
circle are the lines of fixed points given by $\{(0,0,z)\colon z \in
\R\} $, $\{(x,0,0)\colon x \in \R\} $, and $\{(0,y,0)\colon y \in \R\} $
and the heteroclinic orbits which connect them which are contained
in the locus of points where $u=v$. For a given such choice there are four
heteroclinic orbits given by
\begin{align*}
\mathcal{H}_u^{(1)}&=\Big\{ (\sqrt{\tfrac{u- z^2}2},\sqrt{\tfrac{u- z^2}2},z) \colon z\in
[-\sqrt{u},\sqrt{u}]\Big\} \\
\mathcal{H}_u^{(2)}&=\Big\{ (\sqrt{\tfrac{u- z^2}2},-\sqrt{\tfrac{u- z^2}2},z)
\colon z\in [-\sqrt{u},\sqrt{u}]\Big\}\\
\mathcal{H}_u^{(3)}&=\Big\{ (-\sqrt{\tfrac{u- z^2}2},-\sqrt{\tfrac{u- z^2}2},z) \colon z\in
[-\sqrt{u},\sqrt{u}]\Big\}
\\
\mathcal{H}_u^{(4)}&=\Big\{ (-\sqrt{\tfrac{u- z^2}2},\sqrt{\tfrac{u- z^2}2},z)
\colon z\in [-\sqrt{u},\sqrt{u}]\Big\}
\end{align*}

These heteroclinic orbits split each sphere into four regions
which contain closed orbits of finite period. The following set limits
hold
\begin{align*}
  \lim_{v \rightarrow u^-} \Gamma_{u,v}^+&=\lim_{u\rightarrow v^+}
  \Gamma_{u,v}^+ = \mathcal{H}_u^{(1)} \cup \mathcal{H}_u^{(2)}\\
 \lim_{v \rightarrow u^+} \Gamma_{u,v}^+&=\lim_{u\rightarrow v^-}
  \Gamma_{u,v}^+ = \mathcal{H}_u^{(1)} \cup \mathcal{H}_u^{(4)}\\
 \lim_{v \rightarrow u^-} \Gamma_{u,v}^-&=\lim_{u\rightarrow v^+}
  \Gamma_{u,v}^- = \mathcal{H}_u^{(3)} \cup \mathcal{H}_u^{(4)}\\
 \lim_{v \rightarrow u^+} \Gamma_{u,v}^-&=\lim_{u\rightarrow v^-}
  \Gamma_{u,v}^- = \mathcal{H}_u^{(2)} \cup \mathcal{H}_u^{(3)}
\end{align*}
In contrast to the case when $u\neq v$, the orbits starting from a
given point $(x,y,z)$ do not converge to one of these unions of
heteroclinic trajectories since any
given orbit is restricted to a single heteroclinic trajectory. This
could be a point of concern, but we will see in the next section that it
does not pose a problem, which is an interesting and important feature
of this model.

\subsection{Symmetries and their implications}
\label{sec:symmetry}
Defining $\sym_e\colon \R^3 \rightarrow \R^3$ by
$\sym_e(x,y,z)=(y,x,z)$ and $\sym_\pm\colon \R^3 \rightarrow \R^3$ by $\sym_\pm(x,y,z)=(-x,-y,z)$,
observe that if $\xi_t$ is a
solution to \eqref{epsZero}
then so are $\sym_e(\xi_t)$ and
$\sym_\pm(\xi)$. This
implies that
$\Gamma_{u,v}^-=\sym_\pm (\Gamma_{u,v}^+)$ and $\Gamma_{u,v}^+=\sym_e(\Gamma_{v,u}^+)$, and that
  if $\mu$ is an invariant probability measure for $P_t$
then necessarily $\mu \sym_e^{-1}$ and
$\mu \sym_\pm^{-1}$ are also invariant probability measures for $P_t$.

The situation for the stochastic dynamics given in \eqref{eps} is the
same for $\sym_\pm$  but depends on the choice of $\sigma_1$ and
$\sigma_2$ for $\sym_e$. In all cases $\sym_\pm(\xi_t^\epsilon)$ is a
solution (for a different Brownian motion) if $\xi_t^\epsilon$ is a
solution. However  $\sym_e(\xi_t^\epsilon)$ is  again  a solution if
$\xi_t^\epsilon$ is one only when $\sigma_1=\sigma_2$. In any case, we
have the following observation which we formulate as a proposition for
future reference.
\begin{proposition}\label{symInvMeasure}  Let $\sym
  \colon \R^3 \rightarrow \R^3$ be a map such that $\sym(\xi_t^\epsilon)$ is a solution (for possibly a different
  Brownian motion) whenever $\xi_t^\epsilon$ is a solution, then
  $\mu^\epsilon=\mu^\epsilon\sym^{-1}$ where $\mu^\epsilon$ is the
  unique invariant probability measure of $P_t^\epsilon$ guaranteed by Theorem~\ref{invMeasureEpsPos}.
\end{proposition}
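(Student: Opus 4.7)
The plan is to use the uniqueness of $\mu^\epsilon$ established in Theorem~\ref{invMeasureEpsPos}. The strategy is to verify that the pushforward measure $\mu^\epsilon \sym^{-1}$ is itself an invariant probability measure for the semigroup $P_t^\epsilon$, after which uniqueness forces equality.

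First, I would make the hypothesis on $\sym$ precise in distributional terms. The assumption is that if $\xi_t^\epsilon$ solves \eqref{eps} driven by some Brownian motion $W_t$ with initial condition $\xi_0^\epsilon$, then $\sym(\xi_t^\epsilon)$ also solves \eqref{eps}, driven by some (possibly different) Brownian motion $\widetilde W_t$, with initial condition $\sym(\xi_0^\epsilon)$. Since both processes are solutions of the same SDE with the same initial law (once we sample the initial condition appropriately), their laws on path space coincide by uniqueness in law for \eqref{eps}. In particular, for any bounded measurable $\phi$ and any starting point $\xi$,
\begin{equation*}
(P_t^\epsilon \phi)(\sym(\xi)) = \E\bigl[\phi(\sym(\xi_t^\epsilon))\,\big|\,\xi_0^\epsilon=\xi\bigr] = \E\bigl[(\phi\circ \sym)(\xi_t^\epsilon)\,\big|\,\xi_0^\epsilon = \xi\bigr] = \bigl(P_t^\epsilon(\phi\circ \sym)\bigr)(\xi).
\end{equation*}

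Next, I would use this intertwining to show invariance of the pushforward. For any bounded measurable $\phi$,
\begin{equation*}
\int (P_t^\epsilon \phi)\, d(\mu^\epsilon \sym^{-1}) = \int (P_t^\epsilon \phi)\circ \sym\, d\mu^\epsilon = \int P_t^\epsilon(\phi\circ \sym)\, d\mu^\epsilon = \int (\phi\circ \sym)\, d\mu^\epsilon = \int \phi\, d(\mu^\epsilon \sym^{-1}),
\end{equation*}
where the third equality uses the invariance of $\mu^\epsilon$ under $P_t^\epsilon$. Hence $\mu^\epsilon \sym^{-1}$ is also an invariant probability measure for $P_t^\epsilon$.

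Finally, invoking Theorem~\ref{invMeasureEpsPos}, the invariant probability measure for $P_t^\epsilon$ is unique, so $\mu^\epsilon \sym^{-1} = \mu^\epsilon$, which is the claim. The only subtle step, and the one that needs care, is the first one: justifying that the process $\sym(\xi_t^\epsilon)$, which by assumption satisfies the SDE with some adapted Brownian motion $\widetilde W_t$, has the same transition law as $\xi_t^\epsilon$ itself. This follows from weak uniqueness for \eqref{eps} (a consequence of the global Lipschitz character of the linear and noise terms together with the local Lipschitz quadratic drift, combined with the energy bound of Proposition~\ref{prop:energy} that prevents explosion); once weak uniqueness is in hand, the rest of the argument is the standard uniqueness-plus-symmetry reasoning.
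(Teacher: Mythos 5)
Your proposal is correct and follows essentially the same route as the paper: show that the pushforward $\mu^\epsilon\sym^{-1}$ is again an invariant probability measure, then conclude by the uniqueness in Theorem~\ref{invMeasureEpsPos}. The paper states this in one line, calling the invariance of $\mu^\epsilon\sym^{-1}$ ``clear''; you have simply filled in that step via the intertwining identity $(P_t^\epsilon\phi)\circ\sym = P_t^\epsilon(\phi\circ\sym)$, which is the correct justification.
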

\begin{proof}[Proof of Proposition~\ref{symInvMeasure}]
  As before it is clear that $\mu^\epsilon\sym^{-1}$ is again an
  invariant probability measure, however we know that $\mu^\epsilon$ is the
  unique invariant probability measure given the assumptions on the
  $\sigma$'s. Hence we conclude that $\mu^\epsilon\sym^{-1}=\mu$.
\end{proof}

\subsection{Averaging along the deterministic trajectories}\label{avgs}
Since the separation of time scale between the fast and slow dynamics
leads to the averaging of the coefficients of
$(U^\epsilon,V^\epsilon)$ equation around the deterministic orbits we
now discuss averaging along the deterministic orbits in general. After
this we will define the function $\Lambda$
whose asymptotics was described in Proposition \ref{prop:Lambda}.

Given a function $\psi:\R^3 \rightarrow \R$, we define
\begin{align}\label{avg}
 (\mathcal{A}\psi)(\xi)=\lim_{t\rightarrow \infty} \frac1t\int_0^t
(\psi\circ \varphi_s)(\xi) ds.
\end{align}
 Notice
that $\mathcal{A}\psi$ is again a function from $\R^3 \rightarrow
\R$ and that it is constant on the connected components of the level sets of $(u,v)$.

\subsubsection{Averaging when $u\neq v$}\label{sec:avg}
Let $(u,v)=\Phi(\xi)$.
If $u\neq v$ then $\xi$ lies on a
periodic orbit of  finite period. Letting $\tau$ denote the period,  one has
\begin{align*}
 (\mathcal{A}\psi)(\xi)=\frac1\tau \int_0^\tau
(\psi\circ \varphi_s)(\xi) ds
\end{align*}

To obtain a more explicit representation for the averaging operation
we will switch to an angular variable $\theta$. Given any positive $u$
and $v$,  for $\theta \in [0,2\pi]$ we parametrize $z$ by
$z(\theta)=\sqrt{u\wedge v}\sin(\theta)$. To define the other
coordinates we introduce the following auxiliary angles
\begin{align*}
\phi_1(\theta)&=
\begin{cases}
 \arcsin\big(\sqrt{\frac{v}{u}} \sin \theta\big) & u > v\\
 \theta & u \leq v
\end{cases}, &
\phi_2(\theta)&=  \begin{cases}
 \theta &  u \geq v \\
\arcsin\big(\sqrt{\frac{u}{v}} \sin \theta\big) & u < v
\end{cases}  \,.
\end{align*}
and set
$x(\theta)=\sqrt{\frac{u}2}\cos(\phi_1(\theta))$, and
$y(\theta)=\sqrt{\frac{v}2}\cos(\phi_2(\theta))$.
Putting everything together  we have that the
trace of the trajectory starting at
$(\sqrt{\tfrac{u}2},\sqrt{\tfrac{v}2},0)$ is given by
\begin{align*}
  \Gamma_{u,v}^+= \big\{\orbit_{u,v}(\theta)\colon \theta
  \in[0,2\pi] \big\}
\end{align*}
where $\orbit_{u,v}(\theta)\eqdef\big( x(\theta),y(\theta),z(\theta)\big) $.
As already discussed depending on weather $u>v$ or $v>u$ this
represents a closed orbit on the sphere of radius $\sqrt{(u+v)/2}$
which rotates around respectively either the $x$-axis in the positive
$x$ half space or the $y$-axis in the positive $y$ half space. The
orbits in the negative half space are given by $\Gamma_{u,v}^-=\sym_\pm(\Gamma_{u,v}^+)$.

To define the occupation measure on these orbits we define a third
auxiliary angle
\begin{align*}
\phi_{u,v}(\theta)&=  \arcsin\big(\sqrt{\tfrac{u \wedge v}{u \vee v}}
\sin \theta \big) =
\begin{cases}
 \phi_1(\theta) & u> v\\
\theta & u=v\\
 \phi_2(\theta) & u<v
\end{cases}
\end{align*}
For $u \neq v$, we define a probability measure on $\R^3$ by
\begin{align}\label{ocupation}
\nu_{u,v}^{+}(dx\,dy\,dz)=
\int_0^{2\pi}   \frac{K_{u,v}}{|\cos( \phi_{u,v}(\theta))|}
\delta_{\orbit_{u,v}(\theta)}( dx\,dy\,dz)d\theta
\end{align}
where $K_{u,v}^{-1}=\int_0^{2\pi}\frac1{|\cos(
  \phi_{u,v}(\theta))|}d\theta$. We let
$\nu_{u,v}^{-}(dx\,dy\,dz)= \nu_{u,v}^{+}\sym_\pm^{-1}$. For $u=v$, we define
$\nu_{u,u}^{\pm}(dx\,dy\,dz)=\delta_{(0,0,\pm\sqrt{u})}(dx\, dy\, dz)$. Each of these probability measures is supported
on the corresponding set $\Gamma_{u,v}^+$ or  $\Gamma_{u,v}^-$. It is
straightforward to see that for any $\psi:\R^3 \rightarrow \R$ and  $(x,y,z) \in \R^3$ such that
$|x|\neq |y|$ one has
\begin{align}\label{avgNu}
 ( \mathcal{A}\psi)(\xi)=\int_{\R^3} \psi(\eta) \nu_{u,v}^\s(d\eta)
\end{align}
where  $\xi=(x,y,z)$, $(u,v)=\Phi(\xi)$ and $\s=\sn(\xi)$ where
$\sn$ was defined in \eqref{defSigma}.

\subsubsection{Definitions of $\Gamma$ and  $\Lambda$}
\label{sec:Lambda}

The central quantities which need to be averaged in the
$(U^\epsilon_t,V^\epsilon_t)$ dynamics, given in equation
\eqref{rho-chi}, are the infinitesimal quadratic variations. They are
given respectively by $16\sigma_1^2 x^2$ and $16\sigma_1^2 y^2$.
From \eqref{eq:Phi}, we have that $x^2=\frac12(u-z^2)$ and
$y^2=\frac12(v-z^2)$. Since $u$ and $v$ are constant along
the deterministic trajectories, this in turn implies that
\begin{align*}
   \mathcal{A}(x^2) = \frac{u-  \mathcal{A}(z^2)}{2}\quad\text{and}\quad    \mathcal{A}(y^2) = \frac{v-  \mathcal{A}(z^2)}{2}\,.
\end{align*}
Since $z^2$ does not depend on the chose of sign in the definition
$\nu_{u,v}^{\pm}(dx\,dy\,dz)$ by defining the single function
\begin{align}\label{eq:Gamma}
  \Gamma(u,v)=  \mathcal{A}(z^2)= K_{u,v} \int_0^{2\pi}
  \frac{(u\vee v)\sin^2( \theta)  }{|\cos( \phi_{u,v}(\theta))|} d\theta
\end{align}
we have access to  all of the averaged quantities we will require.

Clearly the function $\Gamma(u,v)$ is symmetric in $(u,v)$ and can be
written as a function of $u\vee v$ and $u\wedge v$ only. In fact one
see that if one defines
\begin{align*}
  \Lambda(r)&=  K_r\int_0^{2\pi}
  \frac{\sin^2( \theta)  }{|\cos(  \arcsin( r \sin(\theta)))|}
  d\theta,\ \text{ where}\\
   K_r^{-1} &=\int_0^{2\pi}\frac1{|\cos(
  \arcsin(r \sin(\theta)))| }d\theta
\end{align*}
for $r \in [0,1]$ then \eqref{LambdaToGamma} holds. The properties of
$\Lambda$ given in Proposition~\ref{prop:Lambda} follow directly from
this definition, Proposition~\ref{avgNearDiag} in the next subsection,
and its proof.

\subsubsection{Averaging near the diagonal}
\label{sec:aver-near-diag}

\begin{proposition}\label{avgNearDiag} Let $\psi:\R^3 \rightarrow \R$ be a continuous
  function. If $\delta=1-(u \wedge v)/(v\vee u)$ then as
  $|u-v| \rightarrow 0$ (and hence $\delta \rightarrow 0$) while $(u,v)$ remains in a compact set, one has
  \begin{align}
    \label{nuAsy}
    (\nu\psi)(u,v,\sigma) =   \hf( \psi(0,0,\sqrt{u\vee v}) +
    \psi(0,0,-\sqrt{u\vee v}) ) + o(1)
  \end{align}
If in addition for all $u$
\begin{align*}
  C_{u}(\psi) = \int_0^{2\pi}
  \frac{\psi(\sqrt{u}\cos(\theta),\sqrt{u}\cos(\theta),\sqrt{u}\sin(\theta))}{|\cos(\theta)|}
  d\theta < \infty,
\end{align*} then as $u \rightarrow v$ one has
\begin{align*}
     (\nu\psi)(u,v,\sigma) =
     \frac{C_{u}(\psi) }{2|\ln(1-r  )|} + o\big(|\ln(1-r  )|^{-1}\big),
\end{align*}
where $r=\frac{u\wedge v}{v\vee u}$.
\end{proposition}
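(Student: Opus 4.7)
The plan is to reduce everything to an asymptotic analysis of explicit one-variable integrals using the angular parametrization from Section~\ref{sec:avg}. With $r = (u\wedge v)/(u\vee v)$, the identity $|\cos(\phi_{u,v}(\theta))| = \sqrt{1-r\sin^2\theta}$ gives
\begin{align*}
\int\psi\,d\nu_{u,v}^\s = K_{u,v}\int_0^{2\pi}\frac{\psi(\orbit_{u,v}(\theta))}{\sqrt{1-r\sin^2\theta}}\,d\theta, \qquad K_{u,v}^{-1} = 4K(\sqrt{r}),
\end{align*}
where $K$ is the complete elliptic integral of the first kind. The classical asymptotic $K(k)\sim\tfrac12\ln\!\tfrac{16}{1-k^2}$ as $k\to 1^-$ yields, uniformly for $(u,v)$ in compact sets, $K_{u,v} = (2|\ln(1-r)|)^{-1} + o(|\ln(1-r)|^{-1})$. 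This already explains the logarithmic rate in the second claim.

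For the first assertion the strategy is concentration of the occupation measure near $\theta=\pi/2$ and $\theta=3\pi/2$, the two points where $(1-r\sin^2\theta)^{-1/2}$ is singular. Writing $\theta = \pi/2+\alpha$ and using $(1-r\sin^2\theta)^{-1/2}\approx ((1-r)+\alpha^2)^{-1/2}$, a direct computation shows that for every fixed $\delta>0$ the normalized mass in $(\pi/2-\delta,\pi/2+\delta)$ tends to $\tfrac12$ as $r\to 1^-$ (and similarly at $3\pi/2$, by $\pi$-periodicity of the weight). The explicit formulas for $x(\theta),y(\theta),z(\theta)$ together with $\phi_1,\phi_2\to\theta$ give $\orbit_{u,v}(\pi/2+\alpha)\to\orbit_{u,u}(\pi/2+\alpha)$ pointwise, with $\orbit_{u,u}(\pi/2+\alpha)\to(0,0,\sqrt{u\vee v})$ as $\alpha\to 0$. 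Combining these with continuity of $\psi$ and taking $r\to 1$ first and then $\delta\to 0$ yields \eqref{nuAsy}.

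For the second assertion I would perform the change of variables $\phi=\phi_{u,v}(\theta)$, so that $\cos\phi = \sqrt{1-r\sin^2\theta}$ and $d\theta/\sqrt{1-r\sin^2\theta} = d\phi/(\sqrt{r}\sqrt{1-\sin^2\phi/r})$; for $u>v$ the integral on $\theta\in[-\pi/2,\pi/2]$ becomes
\begin{align*}
\int_{-\arcsin\sqrt{r}}^{\arcsin\sqrt{r}}\frac{\psi\!\left(\sqrt{\tfrac{u}{2}}\cos\phi,\ \sqrt{\tfrac{v}{2}}\sqrt{1-\tfrac{\sin^2\phi}{r}},\ \sqrt{u}\sin\phi\right)}{\sqrt{r}\sqrt{1-\sin^2\phi/r}}\,d\phi,
\end{align*}
with a similar contribution from $\theta\in[\pi/2,3\pi/2]$. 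On $\phi\in(-\pi/2,\pi/2)$ the integrand converges pointwise to $\psi(\orbit_{u,u}(\phi))/|\cos\phi|$. Hence it suffices to establish
\begin{align*}
\int_0^{2\pi}\frac{\psi(\orbit_{u,v}(\theta))}{\sqrt{1-r\sin^2\theta}}\,d\theta\ \longrightarrow\ C_u(\psi),
\end{align*}
which combined with the asymptotic for $K_{u,v}$ produces the stated rate.

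The main technical obstacle is exactly this limit transfer: because the limiting weight $|\cos\phi|^{-1}$ is not integrable, there is no naive integrable dominating function, and the hypothesis $C_u(\psi)<\infty$ must be used as a replacement for dominated convergence. My plan is to split at $|\cos\phi|=\eta$: away from the singular angles the weight is bounded, $\orbit_{u,v}\to\orbit_{u,u}$ uniformly, and standard dominated convergence applies; near the singular angles the hypothesis $C_u(\psi)<\infty$ provides uniform (in $r$) control on the tails of $\psi\circ\orbit_{u,u}/|\cos\phi|$, which, together with uniform continuity of $\psi$ on compacts, bounds the residual contribution uniformly, and one then sends $\eta\to 0$ after $r\to 1$.
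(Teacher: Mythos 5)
Your use of the complete elliptic integral of the first kind for $K_{u,v}^{-1}=4K(\sqrt r)$ is a clean alternative to the $\mathrm{arctanh}$ bounds the paper uses for the same asymptotic $K_{u,v}\sim(2|\ln(1-r)|)^{-1}$, and your near/far decomposition around $\theta=\pi/2,3\pi/2$ for the first assertion follows essentially the paper's route. That part is fine.

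There is, however, a substantive error in the step you rely on for the second assertion. You claim $\phi_1,\phi_2\to\theta$ and hence $\orbit_{u,v}(\theta)\to\orbit_{u,u}(\theta)$ pointwise. Take $u>v$. Then $\phi_1(\theta)=\arcsin(\sqrt r\sin\theta)\to\arcsin(\sin\theta)$, which equals $\theta$ only for $\cos\theta\ge0$; in general $\cos\phi_1(\theta)\to|\cos\theta|$, so
\begin{align*}
\orbit_{u,v}(\theta)\ \longrightarrow\ \Bigl(\sqrt{\tfrac u2}\,|\cos\theta|,\ \sqrt{\tfrac u2}\cos\theta,\ \sqrt u\sin\theta\Bigr),
\end{align*}
which parametrizes the heteroclinic pair $\mathcal H_u^{(1)}\cup\mathcal H_u^{(2)}$ (exactly the set limit of $\Gamma^+_{u,v}$ recorded in Section~\ref{sec:detDynam}), \emph{not} $\orbit_{u,u}(\theta)=(\sqrt{u/2}\cos\theta,\sqrt{u/2}\cos\theta,\sqrt u\sin\theta)$, which sweeps $\{x=y\}=\mathcal H_u^{(1)}\cup\mathcal H_u^{(3)}$. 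In your change-of-variables calculation the same thing is visible: on $\theta\in[\pi/2,3\pi/2]$, inverting $\phi=\phi_{u,v}(\theta)$ gives $\cos\theta=-\sqrt{1-\sin^2\phi/r}$, so the ``similar contribution'' you allude to carries a sign flip in the $y$-argument; it is not the same integrand. Thus the pointwise limit of the full unnormalized integral is $\int_0^{2\pi}\psi(\orbit_\infty(\theta))/|\cos\theta|\,d\theta$, which differs from both $\int_0^{2\pi}\psi(\orbit_{u,u}(\theta))/|\cos\theta|\,d\theta$ and the displayed $C_u(\psi)$ for any $\psi$ that distinguishes $\mathcal H^{(2)}$ from $\mathcal H^{(3)}$. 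Concretely, with $\psi(x,y,z)=x$ and $u>v$ one has $x(\theta)=\sqrt{u/2}\sqrt{1-r\sin^2\theta}$, so the unnormalized integral equals $2\pi\sqrt{u/2}$ for every $r<1$, while both $C_u(\psi)$ and your $\int\psi(\orbit_{u,u})/|\cos\theta|$ vanish. For the first assertion this slip is harmless because you take $r\to1$ before $\delta\to 0$ and all candidate limits agree at $\theta=\pi/2,3\pi/2$; for the second it is fatal as an argument for general $\psi$, though it happens to be harmless for the only $\psi$ the paper uses (functions of $z$ alone, where the two heteroclinic pairings give the same answer).

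Separately, the dominated-convergence replacement you sketch is not yet a proof. The far region $|\cos\phi|>\eta$ is fine, but for the near region you need $\limsup_{\eta\to0}\limsup_{r\to1}\int_{|\cos\theta|<\eta}|\psi(\orbit_{u,v})|/\sqrt{1-r\sin^2\theta}\,d\theta=0$, and $C_u(\psi)<\infty$ only controls the limiting weight applied to the limiting orbit. The inner integral of the weight alone is already of order $|\ln(1-r)|$, while the best generic bound on $|\psi\circ\orbit_{u,v}|$ near the singular angles is a modulus of continuity at the fixed points, so the product need not vanish for merely continuous $\psi$. Some Hölder-type control of $\psi$ near $(0,0,\pm\sqrt u)$ (or an explicit argument exploiting the cancellations for the particular $\psi$ at hand) is needed to finish this step. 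The paper itself is essentially silent on this, so you are not contradicting it, but ``$C_u(\psi)<\infty$ substitutes for domination'' should not be asserted as routine.
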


\begin{remark}
  The asymptotic expansion given in Proposition~\ref{prop:Lambda}
  follows from the fact that $C_u(1-z^2)=4u$. The continuity
  properties follow from the formulas and the fact that the values at
  the ends of the intervals are finite.
\end{remark}

\begin{proof} \cc{This proof needs to be fixed}

We will begin by exploring the asymptotics of the
  constant $K_r$ which equals $K_{u,v}$ when $r=\frac{u\wedge v}{v\vee u}$. Making the change of variables $\alpha=\sin
  \theta$ followed by
  $\beta^2=\sqrt{r}\alpha^2$, one has
  \begin{align*}
    K_r^{-1}&=\int_0^1 \frac{4}{\sqrt{(1-r \alpha^2)(1-\alpha^2)}}
    d\alpha=\frac{4}{r^\frac14} \int_0^{r^\frac14}  \frac{1}{\sqrt{(1-r^{\frac12} \beta^2)(1-r^{-\frac12}\beta^2)}}
    d\beta   \\
    &=\frac{4}{r^\frac14} \int_0^{r^\frac14} \frac{1}{1-\beta^2}\frac{1}{\sqrt{1-\gamma(r)\frac{\beta^2}{(1-\beta^2)^2}}}
    d\beta  ,
  \end{align*}
where $\gamma(r)=r^{\frac12}+r^{-\frac12}-2$.
Now since for all $\beta,r \in (0,1]$ with $\beta \leq r$
\begin{align*}
0\leq   \gamma(r)\frac{\beta^2}{(1-\beta^2)^2} \leq \gamma(r)\frac{r^2}{(1-r^2)^2} \leq\frac1{16}
\end{align*}
 we have
\begin{align}\label{Kbounds}
\frac{4}{r^{\frac14}} \mathrm{arctanh}(r^\frac14)  \leq K_r^{-1} \leq \frac{16}{\sqrt{15}}\frac1{r^{\frac14}} \mathrm{arctanh}(r^\frac14) ,
\end{align}
again for all $r \in (0,1]$. Furthermore it is clear that
\begin{align}\label{KAsy}
  \lim_{r\rightarrow 1} K_r|\ln(1-r)| =\frac12\,.
\end{align}

Now
\begin{align}\label{int1}
   (\nu
   \psi)(u,v,\sigma)=K_{u,v}\int_0^{2\pi}\frac{\psi(x(\theta),y(\theta),z(\theta))
   }{|\cos( \phi(\theta))|} d\theta \,.
\end{align}
As $|u-v|\rightarrow 0$ and hence $r \rightarrow 1$, this integral concentrates around the two points
$\theta$ equal $\pi/2$ and  $3\pi/2$ since around these points $|\cos(
\phi(\theta))| \rightarrow 0$ as  $r\rightarrow 1$. At these
points $(x(\theta),y(\theta),z(\theta))$ converges to
$(0,0,\sqrt{u\vee v})$ and $(0,0,-\sqrt{u\vee v})$
respectively. Around these points we have one behavior and away from
the another. Consider the following representative portion of the
integral which will converge to  $\hf\psi(0,0,\sqrt{u\vee v})$. Fixing
any sufficiency small $\epsilon>0$, we define $a=a(\epsilon)$ so that $\sin(\pi/2
-a) =\sin(\pi/2
+a)  =1-\epsilon$. Then
\begin{align*}
  K_{u,v}\int_0^{\pi}&\frac{\psi(x(\theta),y(\theta),z(\theta))
   }{|\cos( \phi_{u,v}(\theta))|} d\theta =  K_{u,v}\int_0^{\frac{\pi}2-a}\frac{\psi(x(\theta),y(\theta),z(\theta))
   }{|\cos( \phi_{u,v}(\theta))|} d\theta \\&+  K_{u,v}\int_{\frac{\pi}2-a}^{\frac{\pi}2+a}\frac{\psi(x(\theta),y(\theta),z(\theta))
   }{|\cos( \phi_{u,v}(\theta))|} d\theta +  K_{u,v}\int_{\frac{\pi}2+a}^{\pi}\frac{\psi(x(\theta),y(\theta),z(\theta))
   }{|\cos( \phi_{u,v}(\theta))|} d\theta\,.
\end{align*}
The remaining half of the integral in \eqref{int1} will converge to
$\hf\psi(0,0,-\sqrt{u\vee v})$ in a completely analogous fashion. The
first and third integral behave the same. We consider the first. If,
as before, we have $r=\frac{u\wedge v}{v\vee u}$  and then make the change of variables $\alpha=\sin
  \theta$ followed by
  $\beta^2=\sqrt{r}\alpha^2$ to obtain
\begin{align*}
   K_{u,v}\Big|\int_0^{\frac{\pi}2-a}\tfrac{\psi(x(\theta),y(\theta),z(\theta))
   }{|\cos( \phi_{u,v}(\theta))|} d\theta\Big| &\leq \tfrac{ K_{u,v}\|\psi\|_\infty}{r^{\frac14}}
   \int_0^{(1-\epsilon)r^\frac14}  \tfrac{1}{1-\beta^2}\tfrac{1}{\sqrt{1-\gamma(r)\frac{\beta^2}{(1-\beta^2)^2}}}
    d\beta \\
&\leq C K_r \|\psi\|_\infty \mathrm{arctanh}((1-\epsilon)r^{\frac14})\,.
\end{align*}
 By the asymptotics on $K_r$ given  in \eqref{Kbounds}, this
 goes to zero since $\epsilon>0$ as $|u-v| \rightarrow 0$ and  hence $r \rightarrow 1$.

Now as $r \rightarrow 1$ one has
 \begin{align*}
    K_r\int_{\frac{\pi}2-a}^{\frac{\pi}2+a}\frac{\psi(x(\theta),y(\theta),z(\theta))
   }{|\cos( \phi(\theta))|} d\theta\approx& \psi(0,0,\sqrt{u\vee v})  \frac{2K_r}{r^\frac14}  \int_{(1-\epsilon)r^{\frac14}}^{r^{\frac14}} \tfrac{1}{1-\beta^2}
    d\beta\\
\approx&\hf\psi(0,0,\sqrt{u\vee v}) \,.
 \end{align*}
The last conclusion follows directly from the assumed finiteness of
$C_u(\psi)$ and the asymptotics of $K_r$ as $r \rightarrow 1$ as  $|u-v| \rightarrow 0$.
in \eqref{KAsy}.
\end{proof}

\subsection{The Ergodic Invariant Measures}
\label{sec:ergod-invar-meas}
The set of ergodic invariant probability measures is the set of extremal
invariant probability measures. The extremal measures are those which can not be
decomposed. Clearly this corresponds to the collection of  the occupancy measures of each periodic orbit along
with the delta measures sitting on each of the fixed points
$(0,0,\sqrt{u})$ and $(0,0,-\sqrt{u})$. These are precisely the
measures $\nu_{u,v}^\pm$ defined in Section~\ref{sec:avg}. Since the union
of these orbits and fixed points covers all the space except for the
heteroclinic connections which cannot support an invariant
probability measure. Hence
we have identified all  the ergodic probability measures.

We summarize this discussion in the following result.
\begin{proposition}\label{prop:ergodicMeasures}
  The set of ergodic invariant probability measure of \eqref{epsZero} consists
  precisely of
  \begin{align*}
    \{ \nu_{u,v}^+, \nu_{u,v}^- : u,v >0\}\,.
  \end{align*}
\end{proposition}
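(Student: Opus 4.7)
The plan is to combine the ergodic decomposition theorem with the very explicit orbit structure described in Section~\ref{sec:detDynam} to enumerate the extreme points of the convex set of invariant probability measures for $P_t$. Since the ergodic measures are precisely the extremal invariant measures, it suffices to (i) show that each $\nu_{u,v}^\pm$ is ergodic, and (ii) show that every ergodic invariant probability measure must equal one of the $\nu_{u,v}^\pm$.

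For (i), I would argue as follows. Because $\Phi$ is a conserved quantity, every invariant measure must be supported on a single level set $\{\Phi = (u,v)\}$ when it is ergodic (otherwise $(u,v)$ gives a nonconstant invariant function, contradicting ergodicity). When $u \neq v$, each connected component $\Gamma_{u,v}^\pm$ of $\Phi^{-1}(u,v)$ is a single periodic orbit and also an invariant set (since $\sn$ is a further invariant, taking a single value on each component). The measure $\nu_{u,v}^\pm$ is then the unique invariant probability measure supported on this periodic orbit, namely the normalized time-average along the orbit that appears in \eqref{avgNu}, and the classical argument for periodic orbits of smooth ODEs (uniqueness of the Haar measure under the circle action induced by the flow) yields ergodicity. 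When $u = v$, the measure $\nu_{u,u}^\pm = \delta_{(0,0,\pm\sqrt{u})}$ is a Dirac mass at a fixed point of \eqref{epsZero}, hence trivially ergodic.

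For (ii), let $\mu$ be an ergodic invariant probability measure. By ergodicity and the $P_t$-invariance of $\Phi$, there exist $(u,v)$ such that $\mu$ is supported on $\Phi^{-1}(u,v)$, and similarly $\mu$-almost every point has the same value of $\sn$. If $u \neq v$, then the support is forced into a single $\Gamma_{u,v}^\sigma$, which is a single periodic orbit, and uniqueness of the time-average on a periodic orbit gives $\mu = \nu_{u,v}^\sigma$. If $u = v$, then $\Phi^{-1}(u,u)$ decomposes into the two fixed points $(0,0,\pm\sqrt{u})$ together with the four heteroclinic orbits $\mathcal{H}_u^{(i)}$ connecting them. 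On each heteroclinic orbit the trajectory escapes in both time directions to a fixed point, so by the Poincar\'e recurrence theorem no invariant probability measure can charge it; consequently $\mu$ is supported on the two fixed points, and ergodicity forces $\mu$ to be a single Dirac mass, hence $\mu = \nu_{u,u}^\pm$.

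The main conceptual step is the argument that heteroclinic orbits carry no invariant mass, but this is standard: the flow on such an orbit is a diffeomorphic copy of $\R$ with points flowing monotonically from one endpoint to the other, so any probability measure invariant under the time-$t$ translation must be trivial. Every other step is essentially bookkeeping using the explicit parametrization of orbits from Section~\ref{sec:avg} and the ergodic decomposition theorem, which identifies the set of ergodic measures with the extreme points of the invariant simplex. Once (i) and (ii) are combined, the proposition follows.
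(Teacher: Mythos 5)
Your proof follows essentially the same approach as the paper's: pass through the ergodic decomposition theorem to reduce to extremal invariant measures, identify the occupancy measures on the closed orbits $\Gamma_{u,v}^\pm$ and the Dirac masses at the $z$-axis fixed points, and rule out any invariant mass on the heteroclinic connections. You fill in more detail than the paper's brief discussion (treating $\sn$ as a second invariant to split the two components of $\Phi^{-1}(u,v)$, and justifying the heteroclinic step via Poincar\'e recurrence or monotonicity of the flow), but the underlying decomposition and the key observations are the same.
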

Given $(u,v) \in \R_+^2$, we define the probability measure
$\nu_{u,v}$ on $\R^3$ by
\begin{align}\label{eq:nu_uv}
  \nu_{u,v}(d\xi)= \frac12  \nu_{u,v}^+(d\xi) + \frac12\nu_{u,v}^-(d\xi)
\end{align}
where $\nu_{u,v}^\pm(d\xi)$ we defined in  \eqref{ocupation} and the
text below it.

The following corollary of Proposition~\ref{prop:ergodicMeasures} will
be central to the proof of the convergence of $\mu^\epsilon$ to a
unique limiting measure.
\begin{corollary}\label{cor:uniqRepresentationThm}
  Any invariant  probability measure $m$ for \eqref{epsZero} which satisfies $m
  \sym_\pm^{-1} = m$ can be represented as
  \begin{align*}
    m(dx\,dy\,dz) = \int_{[0,\infty)^2} \nu_{u,v}(dx\,dy\,dz)\, \gamma(du\,dv)
  \end{align*}
  for some probability measure $\gamma$ on $(0,\infty)^2$. Furthermore
  the measure $\gamma$ is unique. Conversely, a probability measure which is
  invariant for   \eqref{epsZero} and satisfies $m
  \sym_\pm^{-1} = m$ is uniquely specified by the measure $\gamma=m
  \Phi^{-1}$.
\end{corollary}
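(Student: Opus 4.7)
The plan is to obtain the representation directly from the ergodic decomposition applied to the full catalogue of ergodic invariant measures supplied by Proposition~\ref{prop:ergodicMeasures}, then use the symmetry hypothesis to pair the $+$ and $-$ components with equal weight, and finally push forward by $\Phi$ to identify the mixing measure.

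First, by Proposition~\ref{prop:ergodicMeasures} the set of ergodic invariant probability measures for \eqref{epsZero} is $\{\nu_{u,v}^+,\nu_{u,v}^- : u,v>0\}$ (with the convention $\nu_{u,u}^{\pm}=\delta_{(0,0,\pm\sqrt{u})}$). Applying the ergodic decomposition theorem \cite{Sinai87} to the invariant measure $m$, there exist finite positive measures $\alpha^+$ and $\alpha^-$ on $(0,\infty)^2$ with $\alpha^+((0,\infty)^2)+\alpha^-((0,\infty)^2)=1$ such that
\begin{align*}
m = \int \nu_{u,v}^+ \, \alpha^+(du\,dv) + \int \nu_{u,v}^- \, \alpha^-(du\,dv).
\end{align*}

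Next I would use the $\sym_\pm$ symmetry. From Section~\ref{sec:symmetry}, $\Gamma_{u,v}^- = \sym_\pm(\Gamma_{u,v}^+)$, and a direct check on the defining formula \eqref{ocupation} (noting that $\sym_\pm$ preserves $|\cos\phi_{u,v}|$ along the orbit) gives $\nu_{u,v}^+ \sym_\pm^{-1} = \nu_{u,v}^-$ and vice versa. Consequently
\begin{align*}
m\sym_\pm^{-1} = \int \nu_{u,v}^- \, \alpha^+(du\,dv) + \int \nu_{u,v}^+ \, \alpha^-(du\,dv).
\end{align*}
The hypothesis $m\sym_\pm^{-1}=m$ combined with the uniqueness of the ergodic decomposition then forces $\alpha^+=\alpha^-$. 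Setting $\gamma=\alpha^++\alpha^-=2\alpha^+$, which is a probability measure on $(0,\infty)^2$, and recalling $\nu_{u,v}=\tfrac12(\nu_{u,v}^++\nu_{u,v}^-)$ from \eqref{eq:nu_uv}, we obtain
\begin{align*}
m(dx\,dy\,dz) = \int_{(0,\infty)^2} \nu_{u,v}(dx\,dy\,dz)\, \gamma(du\,dv).
\end{align*}

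For uniqueness of $\gamma$ I would push forward by $\Phi$: by construction both $\nu_{u,v}^+$ and $\nu_{u,v}^-$ are supported on $\Phi^{-1}(u,v)$, so $\nu_{u,v}\Phi^{-1}=\delta_{(u,v)}$, and therefore
\begin{align*}
m\Phi^{-1} = \int \delta_{(u,v)} \, \gamma(du\,dv) = \gamma,
\end{align*}
which pins $\gamma$ down as $m\Phi^{-1}$. The converse direction is routine: given any probability measure $\gamma$ on $(0,\infty)^2$, define $m$ by the mixing formula; invariance of $m$ under $P_t$ follows from invariance of each $\nu_{u,v}$, and $\sym_\pm$-invariance of $m$ follows from the manifest $\sym_\pm$-invariance of each $\nu_{u,v}$; and the push-forward computation above gives $m\Phi^{-1}=\gamma$. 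There is no real obstacle here — the only point requiring a moment's care is that the ergodic decomposition of $m$ is unique, which is what justifies concluding $\alpha^+=\alpha^-$ rather than merely that their sum works.
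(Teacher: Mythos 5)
Your argument is correct and follows essentially the same strategy as the paper for the existence of the representation: ergodic decomposition of $m$ (Proposition~\ref{prop:ergodicMeasures} plus \cite{Sinai87}) into $+$ and $-$ components $\alpha^\pm$, then the $\sym_\pm$-symmetry of $m$ combined with $\nu_{u,v}^{\pm}\sym_\pm^{-1}=\nu_{u,v}^{\mp}$ and uniqueness of the ergodic decomposition to force $\alpha^+=\alpha^-$. The paper makes explicit the mutual singularity of $\nu_{u,v}^-$ and $\nu_{a,b}^+$ as the underlying reason the decomposition is unique; you invoke the uniqueness directly, which is fine since that is exactly the content of the theorem you cite. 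Where you genuinely diverge is the uniqueness of $\gamma$: the paper establishes it by assuming a second mixing measure $\widetilde\gamma$, re-expanding $\nu_{u,v}=\tfrac12\nu^+_{u,v}+\tfrac12\nu^-_{u,v}$, and appealing to uniqueness of the ergodic decomposition a second time. You instead observe that each $\nu_{u,v}^\pm$ is supported on the level set $\Phi^{-1}(u,v)$, so $\nu_{u,v}\Phi^{-1}=\delta_{(u,v)}$ and hence $m\Phi^{-1}=\gamma$. This is cleaner and has the additional virtue of directly identifying $\gamma$ as $m\Phi^{-1}$, thereby delivering the ``conversely'' clause of the statement at the same time, something the paper's proof leaves implicit. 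Both arguments are sound; yours makes the final statement of the corollary more transparent.
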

\begin{proof}[Proof of Corollary~\ref{cor:uniqRepresentationThm}]
  The ergodic decomposition theorem \cite{Sinai87} implies that there exists a
  unique pair of measures $(\gamma^+,\gamma^-)$ so that the total mass
  of $\gamma^++\gamma^-$ is one and
  \begin{multline*}
    m(dx\,dy\,dz) = \int_{[0,\infty)^2} \nu_{u,v}^+(dx\,dy\,dz)\,
    \gamma^+(du\,dv)\\+ \int_{[0,\infty)^2} \nu_{u,v}^-(dx\,dy\,dz)\, \gamma^-(du\,dv)\,.
  \end{multline*}
  Now since $m \sym_\pm^{-1} = m$, $\nu_{u,v}^-=\nu_{u,v}^+
  \sym_\pm^{-1}$ and  $\nu_{u,v}^+=\nu_{u,v}^- \sym_\pm^{-1}$, we have that
  \begin{multline*}
    m(dx\,dy\,dz) = \int_{[0,\infty)^2} \nu_{u,v}^-(dx\,dy\,dz)\,
    \gamma^+(du\,dv)\\+ \int_{[0,\infty)^2} \nu_{u,v}^+(dx\,dy\,dz)\, \gamma^-(du\,dv)\,.
  \end{multline*}
Since $\nu_{u,v}^-$ and  $\nu_{a,b}^+$ are mutually singular for all
 choices of positive $u$, $v$, $a$, and $b$, we see that
$\gamma^+=\gamma^-$ and the total mass of both is $\frac12$.  Setting
$\gamma=2 \gamma^+=2\gamma^-$ we see that $\gamma$ is a probability
measure and that
\begin{align*}
    m(dx\,dy\,dz) &= \int_{[0,\infty)^2}
    \big[\tfrac12\nu_{u,v}^+(dx\,dy\,dz)\, +
    \tfrac12\nu_{u,v}^-(dx\,dy\,dz)\big]\, \gamma(du\,dv)\\
    &=\int_{[0,\infty)^2}
   \nu_{u,v}(dx\,dy\,dz)\, \gamma(du\,dv)
\end{align*}
This proves that any invariant $m$ satisfying the symmetry assumption
can be represented as claimed. All that remains is to show is that
$\gamma$ is unique. Let $\widetilde \gamma$ be another probability measure
so that
\begin{align*}
    m(dx\,dy\,dz) =\int_{[0,\infty)^2}
   \nu_{u,v}(dx\,dy\,dz)\, \widetilde \gamma(du\,dv)
\end{align*}
which implies that
\begin{align*}
   m(dx\,dy\,dz) =\int_{[0,\infty)^2}
   \big[\tfrac12\nu_{u,v}^+(dx\,dy\,dz)\, +
    \tfrac12\nu_{u,v}^-(dx\,dy\,dz)\big]\, \widetilde \gamma(du\,dv)
\end{align*}
which  in turn implies that $\frac12 \widetilde \gamma=\gamma^+$ since the
ergodic decomposition is unique. However, this implies $\widetilde
\gamma=\gamma$ as  desired.
\end{proof}

\subsection{The Limiting Fast Semigroup}
We begin with a small detour to think about the limiting dynamics. Its
action on a test function can be understood to instantly assign to
each point on an orbit the
average of the function around the orbit and to each point on the
heteroclinic connection the value of the function at the limiting fixed
point on the $z$-axis.

Recall the definition of $\nu_{u,v}$ from \eqref{eq:nu_uv}, for $\phi\colon \R^3 \rightarrow \R$ we define
$\nu\phi$ by
\begin{align*}
  (\nu \phi)(u,v)& = \int \phi(\xi)\nu_{u,v}(d\xi)\,.
\end{align*}
Recalling the definition of $\Phi$ which maps $\xi$ to $(u,v)$ from
\eqref{eq:Phi}, we note that for any $\rho:\R^2_+ \rightarrow \R$,
\begin{align}
  \label{eq:nuConst}
  \nu(\rho\circ\Phi)(u,v) = \rho(u,v) \,.
\end{align}
Lastly recalling the definition of
$\widetilde P_t^\varepsilon$ from \eqref{eq:Pt}, $Q_t$ from
\eqref{MarkovQ} and let $\lambda$ be the unique invariant probability measure of
$Q_t$ guarantied by Theorem~\ref{thm:invMeasureUV}.
For $\phi\colon \R^3 \rightarrow \R$
we define
\begin{align}\label{P0fastDynamics}
  (\widetilde P_t \phi)(\xi)&=(Q_t\nu  \phi)\circ \Phi(\xi)
\end{align}
\begin{remark}
  If $\phi$ is a test function such that $\phi\circ \sym_\pm=\phi$ or
  $m$ is an initial measure on $\R^3$ such that $m=m  \sym_\pm^{-1}$
  then is not hard to convince oneself that $m\tilde P_t^\epsilon\phi
  \rightarrow m\tilde P_t\phi$ as $\epsilon\rightarrow0$. If one
  neither starts with initial data which has this symmetry nor uses a
  symmetric test function, then things are
  more complicated. The orbit may average with respect to only one of
  the two measure: $\nu_{u,v}^+$ or $\nu_{u,v}^-$. For definiteness
  assume that we are
  on the $\nu_{u,v}^+$ orbit.  We believe that
  when the $(U,V)$-dynamics hits the line $U=V$ then it is essentially
  spending all of its time at $(0,0,\sqrt{u})$ and $(0,0,-\sqrt{u})$.
  (See Proposition~\ref{avgNearDiag}.) With probability $\frac12$ it
  returns to a $\nu_{u,v}^+$ orbit and with probability $\frac12$ it
  enters on to a $\nu_{u,v}^-$ orbit. Hence to describe the
  $\widetilde P_t$ semigroup in the non-symmetric setting, it seems we need to
  add a sequence of independent Bernoulli random variables to make
 decision of whether one should average with respect to the  $+$ or the
 $-$ orbit. Since we are primarily interested in the structure of the
 invariant probability measure we have not tried to make this picture rigorous.
\end{remark}

Let $\lambda$ be the unique invariant probability measure of $Q_t$ and define $\mu=\lambda
\nu$. Observe  that $\mu$ is invariant under $\widetilde P_t$ because
for any bounded $\phi\colon \R^3 \rightarrow \R$ one has
\begin{align*}
 \mu \widetilde P_t \phi &= \lambda\nu (Q_t \nu \phi \circ
 \Phi) = \lambda Q_t \nu \phi
= \lambda  (\nu \phi)=  \mu\phi\,.
\end{align*}
Here the first equality is by definition, the second follows from
\eqref{eq:nuConst}, the third from the invariance of $\lambda$ under
$Q_t$ and the last from the definition of $\mu$.

\section{Convergence of $(U^\eps,V^\eps)$ towards $(U,V)$}\label{sec8}

We now prove the results which were taken for granted in Section~\ref{sec6}, namely that
$\{(U_t^\varepsilon,V_t^\varepsilon)\}_{\eps>0}$ is tight, and that any accumulation point
$(U_t,V_t)$  solves the SDE \eqref{UV}.

\cc{some introductory words}

\subsection{Tightness}
Let us rewrite \eqref{rho-chi} in the form
\begin{equation}\label{uveps}
\left\{
\begin{aligned}
dU^\epsilon_t&=(C_u-2U^\epsilon_t)dt+dM^\epsilon_t\\
dV^\epsilon_t&=(C_v-2V^\epsilon_t)dt+dN^\epsilon_t,
\end{aligned}
\right.
\end{equation}
where $\{M^\epsilon_t,\, t\ge0\}$ and $\{N^\epsilon_t,\, t\ge0\}$ are continuous local martingales
such that
\begin{equation}\label{boundQV}
\frac{d}{dt}\langle M^\epsilon\rangle_t\le CU^\epsilon_t,\quad \frac{d}{dt}\langle N^\epsilon\rangle_t\le CV^\epsilon_t,
\end{equation}
where $C_u$, $C_v$ and $C$ are three positive constants.

We want to show
\begin{proposition}\label{tightness} Suppose that
  \begin{align*}
    \sup_{\epsilon >0}\E\left[(U^\epsilon_0)^2+(V^\epsilon_0)^2\right]<\infty\,.
  \end{align*}
Then the collection of processes $\{(U^\epsilon_t,V^\epsilon_t),\, t\ge0\}_{\epsilon>0}$ is
tight in $C([0,+\infty);\R^2)$.
\end{proposition}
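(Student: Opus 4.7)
The plan is to verify a standard tightness criterion by decomposing $U^\epsilon$ and $V^\epsilon$ into their drift and martingale parts as in \eqref{uveps}.

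First, I would establish uniform-in-$\epsilon$ moment estimates by applying Lemma~\ref{l:boundingLemma} to $U^\epsilon$ and $V^\epsilon$ with $p=2$: the SDEs \eqref{uveps} together with the quadratic variation bound \eqref{boundQV} place these processes precisely in the framework of that lemma. Conditioning on the initial values and using the hypothesis $\sup_\epsilon \E[(U^\epsilon_0)^2+(V^\epsilon_0)^2]<\infty$ yields
\[
\sup_{\epsilon>0,\,t\geq 0}\E\bigl[(U^\epsilon_t)^2+(V^\epsilon_t)^2\bigr]<\infty.
\]
A subsequent application of Doob's $L^2$ inequality to $M^\epsilon$ and $N^\epsilon$, using \eqref{boundQV} to bound the quadratic variations by the already controlled moments, upgrades this to
\[
\sup_{\epsilon>0}\E\Bigl[\sup_{t\leq T}\bigl((U^\epsilon_t)^2+(V^\epsilon_t)^2\bigr)\Bigr]<\infty \quad\text{for every } T>0,
\]
which in particular ensures uniform $L^2$ bounds on $U^\epsilon_\tau$ and $V^\epsilon_\tau$ for any $\F_t$-stopping time $\tau\leq T$.

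Next, I would verify Aldous's tightness criterion. For an arbitrary family of $\F_t$-stopping times $\{\tau_\epsilon\}$ bounded by $T$ and $\delta>0$, I write
\[
U^\epsilon_{\tau_\epsilon+\delta}-U^\epsilon_{\tau_\epsilon}=\int_{\tau_\epsilon}^{\tau_\epsilon+\delta}(C_u-2U^\epsilon_r)\,dr+\bigl(M^\epsilon_{\tau_\epsilon+\delta}-M^\epsilon_{\tau_\epsilon}\bigr),
\]
and bound each piece in $L^2$. The drift term is $O(\delta)$ in $L^2$ by Cauchy--Schwarz together with the uniform moment bound; for the martingale term, optional stopping and \eqref{boundQV} give
\[
\E\bigl(M^\epsilon_{\tau_\epsilon+\delta}-M^\epsilon_{\tau_\epsilon}\bigr)^2=\E\bigl[\langle M^\epsilon\rangle_{\tau_\epsilon+\delta}-\langle M^\epsilon\rangle_{\tau_\epsilon}\bigr]\leq C\,\E\int_{\tau_\epsilon}^{\tau_\epsilon+\delta}U^\epsilon_r\,dr\leq C'\delta.
\]
The same argument applies verbatim to $V^\epsilon$, so $\E\bigl|U^\epsilon_{\tau_\epsilon+\delta}-U^\epsilon_{\tau_\epsilon}\bigr|^2+\E\bigl|V^\epsilon_{\tau_\epsilon+\delta}-V^\epsilon_{\tau_\epsilon}\bigr|^2\to 0$ uniformly in $\epsilon$ as $\delta\downarrow 0$.

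Combined with the tightness of the initial marginals $(U^\epsilon_0,V^\epsilon_0)$ (immediate from the uniform second-moment hypothesis), Aldous's criterion yields tightness of $\{(U^\epsilon,V^\epsilon)\}_{\epsilon>0}$ in $C([0,T];\R^2)$ for every finite $T$, and hence in $C([0,+\infty);\R^2)$ equipped with uniform convergence on compact intervals. I do not anticipate a major obstacle: the argument is essentially routine once the uniform moment bounds are available, and the sublinear-in-state quadratic variation estimate \eqref{boundQV} is precisely what Lemma~\ref{l:boundingLemma} is designed to handle. The only mild subtlety is making sure the moment estimates are uniform over stopping times, and this is exactly what the Doob-based supremum bound above supplies.
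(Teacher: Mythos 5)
Your proof is correct and reaches the same conclusion, but by a genuinely different route than the paper. The paper applies Duhamel's formula to isolate the stochastic convolution $W^\epsilon_t=\int_0^t e^{2s}\,dM^\epsilon_s$ and then verifies Billingsley's direct $C$-tightness criterion (Theorem 7.3 via the Corollary of Theorem 7.4 in \cite{CPM_2}) by a Chebyshev--Burkholder--Davis--Gundy estimate on \emph{fourth} moments of increments over a deterministic window $[t,t+\delta]$, producing the needed $O(\delta^2)$ bound. You instead invoke Aldous's stopping-time criterion, checking only an $L^2$ bound $\E\big|U^\epsilon_{\tau_\epsilon+\delta}-U^\epsilon_{\tau_\epsilon}\big|^2=O(\delta)$ via the martingale $L^2$-isometry. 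That gives $D$-tightness; since all the $(U^\epsilon,V^\epsilon)$ have continuous paths, the standard inequality $w_x(\delta)\le 2\,w'_x(\delta)$ for continuous $x$ upgrades it to $C$-tightness --- you gloss over this last reduction, and it is worth stating explicitly. Both arguments rest on the same key input, the uniform-in-$(\epsilon,t)$ second-moment bound from Lemma~\ref{l:boundingLemma} (recorded in the paper as Lemma~\ref{moments}). What each buys: your route trades the BDG inequality for a Doob $L^2$-maximal estimate, which you correctly identify as necessary to bound $\E\int_{\tau_\epsilon}^{\tau_\epsilon+\delta}U^\epsilon_r\,dr$ by $C\delta$ uniformly over stopping times (an alternative is to bound $\E\,U^\epsilon_{\tau_\epsilon+s}$ directly by optional sampling on the semimartingale decomposition, avoiding Doob); the paper's route avoids stopping times entirely but needs fourth moments to obtain a Kolmogorov-type exponent $\delta^{1+\beta}$. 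Both are standard and comparably efficient here.
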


In light of \eqref{uveps} and \eqref{boundQV}, the following needed
first step follows from Lemma~\ref{l:boundingLemma}.
\begin{lemma}\label{moments}
Under the condition of Proposition \ref{tightness},
$$\sup_{\epsilon>0}\sup_{t\ge0}\E\left[(U^\epsilon_t)^2+(V^\epsilon_t)^2\right]<\infty.$$
\end{lemma}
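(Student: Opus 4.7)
The claim is essentially an immediate consequence of Lemma~\ref{l:boundingLemma} applied to each of $U^\epsilon_t$ and $V^\epsilon_t$ separately, so my plan is to verify that the hypotheses of that lemma are met with constants that are uniform in $\epsilon$, and then to take expectation over the (random) initial data.

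Concretely, first I would observe that since $U^\epsilon_t = 2(\widetilde X^\epsilon_t)^2 + (\widetilde Z^\epsilon_t)^2$ and $V^\epsilon_t = 2(\widetilde Y^\epsilon_t)^2 + (\widetilde Z^\epsilon_t)^2$, both processes are nonnegative semimartingales. The SDEs \eqref{uveps} then have exactly the form $dX_t = (a - bX_t)dt + dM_t$ with $a = C_u = 2\sigma_1^2$, $b = 2$ for the $U$--equation, and analogously $a = C_v = 2\sigma_2^2$, $b = 2$ for the $V$--equation. The bounds \eqref{boundQV} on the quadratic variations give $d\langle M^\epsilon\rangle_t \le c\,U^\epsilon_t\,dt$ and $d\langle N^\epsilon\rangle_t \le c\,V^\epsilon_t\,dt$ with $c = C$ (which can be taken e.g.\ equal to $8\sigma_{\max}^2$ from the identity $(\widetilde X^\epsilon_t)^2 \le U^\epsilon_t/2$ and similarly for $\widetilde Y^\epsilon_t$). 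Crucially, none of $a$, $b$, $c$ depends on $\epsilon$.

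Next I would apply Lemma~\ref{l:boundingLemma} with $p = 2$ conditionally on $U^\epsilon_0$, which yields, for a constant $C(2)$ depending only on $C_u$, $2$, and $C$,
\begin{equation*}
\E\bigl[(U^\epsilon_t)^2 \mid U^\epsilon_0\bigr] \le C(2)\bigl[1 + e^{-2t}U^\epsilon_0 + e^{-4t}(U^\epsilon_0)^2\bigr],
\end{equation*}
and analogously for $V^\epsilon_t$. Taking expectations, using Cauchy--Schwarz to dominate $\E[U^\epsilon_0]$ by $1 + \E[(U^\epsilon_0)^2]$ and the hypothesis of Proposition~\ref{tightness} to bound $\sup_\epsilon \E[(U^\epsilon_0)^2 + (V^\epsilon_0)^2]$, one obtains
\begin{equation*}
\E\bigl[(U^\epsilon_t)^2 + (V^\epsilon_t)^2\bigr] \le C'\bigl(1 + \sup_{\epsilon>0}\E[(U^\epsilon_0)^2 + (V^\epsilon_0)^2]\bigr),
\end{equation*}
uniformly in $t \ge 0$ and $\epsilon > 0$, which is the asserted bound.

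There is no serious obstacle here; the only thing to keep an eye on is that the constants $C_u, C_v, C$ arising from \eqref{uveps}--\eqref{boundQV} are genuinely independent of $\epsilon$ (they depend only on $\sigma_1, \sigma_2$), so that the constant $C(2)$ produced by Lemma~\ref{l:boundingLemma} may be chosen once and for all. Given that, the lemma delivers everything.
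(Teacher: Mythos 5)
Your proof is correct and matches the paper's approach exactly: the paper itself simply notes that Lemma~\ref{moments} ``follows from Lemma~\ref{l:boundingLemma}'' in light of \eqref{uveps} and \eqref{boundQV}, and you have supplied precisely the missing details (conditioning on the random initial data, applying the lemma with $p=2$, bounding the exponentials by $1$, and invoking the assumed uniform second-moment bound on $(U^\epsilon_0,V^\epsilon_0)$). The observation that the constants $C_u, C_v, C$ are $\epsilon$-independent, so that $C(2)$ can be chosen uniformly, is the correct and essential point.
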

We can now proceed with the proof of tightness.
\begin{proof}[Proof of Proposition~\ref{tightness}]
We prove tightness of $U^\epsilon$ only, $V^\epsilon$ being treated completely similarly.
We have
$$U^\epsilon_t=\frac{C_u}{2}+\left(U^\epsilon_0-\frac{C_u}{2}\right)e^{-2t}+e^{-2t}\int_0^te^{2s}dM^\epsilon_s.$$
Clearly the first two terms on the right are tight in $C([0,\infty))$, since the collection of $\R$--valued r.v.'s $U^\epsilon_0$ is tight. We only need check tightness in $C([0,\infty))$
of the process $W^\epsilon_t:=\int_0^te^{2s}dM^\epsilon_s$. Since $W^\epsilon_0=0$, we need only verify condition (ii) from Theorem 7.3 in Billingsley \cite{CPM_2}, which follows from the condition of the Corollary of Theorem 7.4 again in \cite{CPM_2}. In other words it suffices to check that for any $T$, $\eta$ and $\eta'>0$, there exists $\delta\in(0,1)$
such that for all $\epsilon>0$, $0\le t\le T-\delta$,
\begin{equation}\label{condBIL}
\frac{1}{\delta}\P\left(\sup_{t\le s\le t+\delta}\left|W^\epsilon_s-W^\epsilon_t\right|\ge\eta\right)\le\eta'.
\end{equation}
Combining Chebycheff and Burkholder--Davis--Gundy inequalities, we deduce that
(we use below the result from Lemma \ref{moments})
\begin{align*}
\P\left(\sup_{t\le s\le t+\delta}\left|W^\epsilon_s-W^\epsilon_t\right|\ge\eta\right)&\le
\eta^{-4}\E\left(\left|\langle W^\epsilon\rangle_{t+\delta}-\langle W^\epsilon\rangle_{t}\right|^2\right)\\
&\le \eta^{-4}e^{8T} C^2\delta\int_t^{t+\delta}\E[(U^\epsilon_s+V^\epsilon_s)^2]ds\\
&\le\eta^{-4}\bar C e^{8T} \delta^2,
\end{align*}
from which \eqref{condBIL} follows if we choose $\delta=e^{-4T}\eta^2\sqrt{\eta'/\bar C}$.
\end{proof}

\subsection{Tighness of $\lambda^\epsilon$}
\label{sec:tighness-lambda}
Since $(U_t^\epsilon,V_t^\epsilon)$ is not a Markov process it does
not have an invariant probability measure. However the projection
$\lambda^\epsilon=\mu^\epsilon \Phi^{-1}$ of $\mu^\epsilon$, the unique invariant probability measure of
the Markov process $\xi_t^\epsilon$, is well defined. We now establish the following
tightness result:
\begin{lemma}\label{tightnessLambda} The sequence of measure $\{ \lambda^\epsilon: \epsilon
  >0\}$ is tight on the space $(0,\infty)\times(0,\infty)$.
\end{lemma}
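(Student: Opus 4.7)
The plan is to establish tightness on the open quadrant $(0,\infty)\times(0,\infty)$ via two uniform-in-$\epsilon$ bounds on $\mu^\epsilon$: a standard moment bound ruling out mass escaping to infinity, and a negative-moment bound on $U=2x^2+z^2$ (and, by symmetry, on $V=2y^2+z^2$) ruling out concentration on the axes $\{u=0\}$ and $\{v=0\}$.

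For the tail bound I would invoke Corollary~\ref{cor:xiInvMeasure}, which gives $\int|\xi|^2\,d\mu^\epsilon$ uniformly bounded in $\epsilon$; hence $\int(u+v)\,d\lambda^\epsilon = 2\int(x^2+y^2+z^2)\,d\mu^\epsilon \leq C$, and Chebyshev controls $\lambda^\epsilon(\{u+v > R\})$ uniformly as $R\to\infty$.

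The substantive step is the axis bound. I would apply the stationary identity $\int\mathcal{L}^\epsilon f_n\,d\mu^\epsilon = 0$ to the bounded smooth test function $f_n(\xi)=(U(\xi)+1/n)^\alpha$, for some fixed $\alpha\in(1/2,1)$. A direct computation of the generator of $\xi^\epsilon$ yields
\[
\mathcal{L}^\epsilon f_n = 2\epsilon\alpha(\sigma_1^2-U)(U+1/n)^{\alpha-1} + 8\epsilon\sigma_1^2\,\alpha(\alpha-1)\,x^2(U+1/n)^{\alpha-2}.
\]
The key input is the pointwise algebraic inequality $2x^2 \leq U \leq U+1/n$, which forces $x^2(U+1/n)^{\alpha-2} \leq \tfrac12(U+1/n)^{\alpha-1}$; because $\alpha(\alpha-1)<0$ this estimate flips when inserted into the second term. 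Combined with the identity $\int U\,d\mu^\epsilon = \sigma_1^2$ (itself obtained from $\int\mathcal{L}^\epsilon U\,d\mu^\epsilon=0$) and the Jensen bound $\int(U+1/n)^\alpha\,d\mu^\epsilon \leq (\sigma_1^2+1)^\alpha$, the invariance identity collapses to
\[
\sigma_1^2(2\alpha-1)\int(U+1/n)^{\alpha-1}\,d\mu^\epsilon \leq (\sigma_1^2+1)^\alpha.
\]
Passing $n\to\infty$ by monotone convergence produces a bound on $\int U^{\alpha-1}\,d\mu^\epsilon$ uniform in $\epsilon$, and Markov's inequality then yields $\lambda^\epsilon(\{u<\delta\}) \leq C\,\delta^{1-\alpha}$ uniformly in $\epsilon$. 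The symmetric argument, with $(y,V,\sigma_2)$ in place of $(x,U,\sigma_1)$, handles $\{v<\delta\}$.

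I do not anticipate any serious obstacle. The only point demanding care is the justification of $\int\mathcal{L}^\epsilon f_n\,d\mu^\epsilon=0$, but the approximation $f_n$ was engineered precisely so that $f_n$ and its first two derivatives are bounded, whence the identity follows immediately from the stationary Dynkin formula together with the $L^2$ martingale estimates of Proposition~\ref{prop:energy}. The whole argument ultimately rests on the single algebraic fact $2x^2 \leq U$, which forces the diffusion coefficient in the $U^\epsilon$-equation to vanish sufficiently fast near $\{U=0\}$ that the stationary measure cannot concentrate there.
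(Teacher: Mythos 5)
Your argument is correct, but it is a genuinely different route to the zero–boundary bound than the one used in the paper. The paper proves a general Lemma~\ref{l:tightNearZero} with the piecewise logarithmic Lyapunov function $H$ (with $H''=\phi$, $\phi(x)=-1/x$ on $(0,1]$), applied separately to the CIR‐type equations satisfied by $U$ and $V$, and obtains the weaker estimate $\lambda^\epsilon\{u\le\delta\}\lesssim 1/|\log\delta|$ (Corollary~\ref{cor:zero}); tightness then follows by combining this with Lemma~\ref{LambdaBoundAbove}. You instead use a power-law test function $(U+1/n)^\alpha$ with $\alpha\in(\tfrac12,1)$ in the stationary identity $\int\mathcal L^\epsilon f_n\,d\mu^\epsilon=0$, getting a uniform-in-$\epsilon$ bound on the negative moment $\int U^{\alpha-1}\,d\mu^\epsilon$ and hence $\lambda^\epsilon\{u\le\delta\}\lesssim\delta^{1-\alpha}$, which is quantitatively stronger (polynomial vs.\ logarithmic decay in $\delta$). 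Both arguments are driven by the same algebraic fact $2x^2\le U$ (equivalently $g\le f$ in the paper's notation), which in CIR language says $2a/c^2\ge\tfrac12$; this is exactly what makes your threshold $\alpha>\tfrac12$ available. One small inaccuracy to fix: you call $f_n$ ``bounded,'' but $(U+1/n)^\alpha$ and its gradient (which grows like $U^{\alpha-1/2}$) are unbounded --- only $D^2 f_n$ is bounded. That does not break the argument: all terms in $\mathcal L^\epsilon f_n$ and the martingale part $\sqrt\epsilon\,\sigma\!\cdot\!\nabla f_n$ have sub-quadratic growth in $|\xi|$, so the moment bounds of Corollary~\ref{cor:xiInvMeasure} and Proposition~\ref{prop:energy} justify $\int\mathcal L^\epsilon f_n\,d\mu^\epsilon=0$, but the justification should be stated in those terms rather than via boundedness.
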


\begin{remark} We emphasis that Lemma~\ref{tightnessLambda} is
  tightness in the open set $(0,\infty)\times(0,\infty)$ which implies
  the measure does not accumulate neither at the boundary at
  ``infinity'' nor at the boundary at zero.
  In other words, for any $\delta>0$ there exists a $r>0$ so that
  \begin{align*}
     \inf_{\epsilon>0} \, \lambda^\epsilon ( [\tfrac1r,r] \times
     [\tfrac1r,r]) > 1-\delta.
  \end{align*}
\end{remark}

The following result which implies the tightness at infinity follows
immediately from the definition of $\lambda^\epsilon$, the definition
of $\Phi$ and Corollary~\ref{cor:xiInvMeasure}.
\begin{lemma}\label{LambdaBoundAbove}
  For any $p\geq 1$, there exists a $C(p)>0$ so that
  \begin{align*}
    \sup_{\epsilon >0}    \int (u^{p} + v^p) \lambda^\epsilon(du, dv) < C(p)
  \end{align*}
\end{lemma}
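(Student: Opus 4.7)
The plan is to reduce the bound to the moment estimate on $\mu^\epsilon$ already established in Corollary~\ref{cor:xiInvMeasure}, using the change-of-variables interpretation of $\lambda^\epsilon$. Since $\lambda^\epsilon = \mu^\epsilon \Phi^{-1}$ with $\Phi(x,y,z) = (2x^2+z^2, 2y^2+z^2)$, for any bounded measurable $g\colon \R_+^2 \to \R$ we have $\int g(u,v)\,\lambda^\epsilon(du\,dv) = \int g(\Phi(\xi))\,\mu^\epsilon(d\xi)$, and by monotone convergence this extends to nonnegative measurable $g$.

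Applied to $g(u,v) = u^p + v^p$, this gives
\begin{equation*}
\int (u^p + v^p)\,\lambda^\epsilon(du\,dv) = \int \bigl[(2x^2+z^2)^p + (2y^2+z^2)^p\bigr]\,\mu^\epsilon(dx\,dy\,dz).
\end{equation*}
The key observation is the pointwise bound $(2x^2+z^2)^p + (2y^2+z^2)^p \leq 2\cdot 2^p |\xi|^{2p}$, which is immediate since $2x^2+z^2 \leq 2|\xi|^2$ and $2y^2+z^2 \leq 2|\xi|^2$. Substituting this in gives
\begin{equation*}
\int (u^p + v^p)\,\lambda^\epsilon(du\,dv) \leq 2^{p+1} \int |\xi|^{2p}\,\mu^\epsilon(d\xi).
\end{equation*}

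Finally, Corollary~\ref{cor:xiInvMeasure} provides a constant $\tilde C(p)$ independent of $\epsilon$ with $\int |\xi|^{2p}\,\mu^\epsilon(d\xi) \leq \tilde C(p)$ for every invariant probability measure $\mu^\epsilon$ of $P_t^\epsilon$. Taking $C(p) = 2^{p+1}\tilde C(p)$ yields the stated bound uniformly in $\epsilon$. There is no genuine obstacle here; the argument is purely a combination of the change-of-variables formula, an elementary pointwise majorization of $\Phi$ by $|\xi|^2$, and the already-established uniform moment bound on $\mu^\epsilon$.
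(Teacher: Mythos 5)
Your argument is exactly the route the paper intends: the paper's own ``proof'' of this lemma is a single sentence asserting that it follows immediately from the definition of $\lambda^\epsilon = \mu^\epsilon\Phi^{-1}$, the formula for $\Phi$, and the uniform moment bound of Corollary~\ref{cor:xiInvMeasure}. You have simply filled in the elementary details (change of variables, the pointwise bound $(2x^2+z^2)^p+(2y^2+z^2)^p\le 2^{p+1}|\xi|^{2p}$, and the invocation of the corollary with exponent $2p$), and the proof is correct.
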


We now handle the boundary at zero.

\begin{lemma}\label{l:tightNearZero}Let $\zeta_t$ be a Markov process and $f$ and $g$ two real-valued functions on the state space of $\zeta_t$ satisfying $0\leq g(\zeta_t) \leq f(\zeta_t)$ for all $t \geq0$ almost surely and such that $f(\zeta_t)$ is a continuous semimartingale satisfying \begin{equation*}
  df(\zeta_t) = (a-f(\zeta_t))dt + c \sqrt{g(\zeta_t)} dW_t
\end{equation*}
where $a$ and $c$ are positive constants and $W_t$ a standard Wiener process.
If $\mu$ is any invariant probability measure of $\zeta_t$ with $\mu[\, f^2\,]=\int f^2(\zeta) d\zeta < \infty$, then for any $\delta \in (0,1)$.
\begin{align}
    \mu\{ f  \leq \delta\} & \leq  \frac{ \mu \big[\, f^2\, \big]+b}{ a\,|\log \delta|} \,,
\end{align}
with $b=1+c^2/2$.
\end{lemma}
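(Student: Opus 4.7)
The plan is to reduce everything to an $L^1(\mu)$ estimate on $(-\log f)_+$, which is immediate from the pointwise inequality $\mathbf{1}_{\{f\le\delta\}}\le(-\log f)_+/|\log\delta|$ (valid because $-\log f\ge|\log\delta|$ on $\{f\le\delta\}$). Integrating against $\mu$ reduces the lemma to showing
$$a\,\mu\bigl[(-\log f)_+\bigr]\le\mu[f^2]+b.$$

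I would obtain this by applying It\^o's formula to two test functions of $f$ and exploiting the invariance of $\mu$ (so that $\mu[\mathcal{L} h(f)]=0$ for suitable $h$). The first test function is $h_1(x)=x^2$: writing out the drift of $df_t^2$ and integrating against $\mu$ yields $2a\mu[f]+c^2\mu[g]=2\mu[f^2]$, and since $g\ge 0$ this gives the \emph{key consequence} $a\mu[f]\le\mu[f^2]$. The second is $h_2(x)=x\log x-x$, chosen so that $h_2'(x)=\log x$ annihilates any constant drift: the generator of $f$ applied to $h_2$ reads $\mathcal{L}h_2(f)=(a-f)\log f+c^2 g/(2f)$, and invariance gives
$$a\mu[-\log f]=-\mu[f\log f]+\tfrac{c^2}{2}\mu[g/f]\le 1+\tfrac{c^2}{2}=b,$$
using $x\log x\ge -1/e\ge -1$ and $g/f\le 1$. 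To pass from $\mu[-\log f]$ to $\mu[(-\log f)_+]$ I write $(-\log f)_+=-\log f+(\log f)_+$ and bound $(\log f)_+\le f$ (since $\log x\le x$), so that $a\mu[(\log f)_+]\le a\mu[f]\le\mu[f^2]$ by the first step. Adding gives $a\mu[(-\log f)_+]\le\mu[f^2]+b$, and dividing by $|\log\delta|$ is the desired inequality.

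The main obstacle is the rigorous justification of the invariance identity $\mu[\mathcal{L}h(f)]=0$ for the unbounded test functions $h_1$ and $h_2$. One must show that $f_t>0$ almost surely under $\mu$ (so $\log f_t$ is defined), stop at hitting times $\tau_R=\inf\{t:f_t\notin[R^{-1},R]\}$ to get a genuine It\^o identity on $[0,t\wedge\tau_R]$, and pass to the limit $R\to\infty$ using $\mu[f^2]<\infty$ together with monotone/dominated convergence. This localization and integrability check is standard but needs to be stated carefully, especially to handle the singularity of $h_2'(x)=\log x$ and $c^2 g/(2f)$ near $x=0$.
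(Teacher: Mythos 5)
Your argument is correct and, after unwinding, it is essentially the same idea as the paper's, though packaged differently. The paper's proof applies It\^o's formula once, to a single $C^1$ function $H$ with $H'' = \phi$, where $H(x) = 2x - x\log x$ for $x\le 1$ and $H(x)=\tfrac12 x^2+\tfrac32$ for $x\ge 1$; this is exactly a $C^1$ glueing of (the negative of) your $h_2(x)=x\log x - x$ near zero with (half of) your $h_1(x)=x^2$ at infinity. Your version applies It\^o separately to $h_1$ and $h_2$ and then adds the two resulting inequalities, using $h_1$ to control $\mu[(\log f)_+]\le\mu[f]\le\mu[f^2]/a$ and $h_2$ to control $\mu[-\log f]\le b/a$. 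Both proofs hinge on the same two mechanisms: the second-derivative singularity $h''\sim 1/x$ is harmless because the diffusion coefficient satisfies $g\le f$, so $c^2 g\,h''(f)/2$ stays bounded by $c^2/2$; and the drift term $a\,h'(f)=a\log f$ produces exactly the $|\log f|\mathbf{1}\{f\le1\}$ weight that Chebyshev turns into the final bound. Your two-function version is arguably a bit more transparent about where each piece of the constant $b$ comes from; the paper's single-function version has the minor advantage of needing just one application of It\^o and only the a priori bound $\mu[H\circ f]<\infty$ (from $H\le 2+x^2$) rather than a separate discussion for each test function. You are also right that the localization at $\tau_R=\inf\{t:f_t\notin[R^{-1},R]\}$ and the $R\to\infty$ passage (using Fatou for $\E_\mu[h_2(f_{t\wedge\tau_R})]$, which is bounded below by $-1$, plus stationarity) is the point that requires care and that both write-ups pass over lightly; your sketch of how to handle it is sound.
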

\begin{proof}[Proof of Lemma~\ref{l:tightNearZero}]
Defining
\begin{align*}
  \phi(x) &=
  \begin{cases}
    -\frac1x ,& x \leq 1\\
    1 ,& x\geq 1
  \end{cases}, \quad     I(x) =
  \begin{cases}
1-\log x  ,& x \leq 1\\
x ,& x \geq 1
  \end{cases},\\ H(x)&=
  \begin{cases}
    2x - x \log x ,& x \leq 1\\
    \frac12 x^2+\frac32 ,& x\geq 1
  \end{cases}.
\end{align*}
Observe that $ H'(x) = I(x) $ and $I'(x)=\phi(x)$ and that $\phi$, $I$ and $H$ are well defined on the
intervals $(0,\infty)$, $(0,\infty)$ and $[0,\infty)$ respectively. $H$ and $I$ are everywhere positive, while $\phi$ is positive on $[1,+\infty)$ and negative on $(0,1)$. It is plain that the discontinuity of $H''$ at $x=1$ will not prevent us from using It\^o's formula.
Taking $\zeta_0$ distributed according to $\mu$, noticing that since
$H(x) < 2+ x^2$ for $x\geq 0$, and setting $X_t=f(\zeta_t)$ for
notational convenience, we have that
\begin{align}\label{eq:HFinite}
  \mu \big[\, \E_{\zeta_0} H(X_t) \,\big]= \mu \big[\, \E_{\zeta_0}
  (H\circ f)(\zeta_t) \,\big] =\mu \big[\, H\circ f\,\big] < \infty.
\end{align}
Now from It\^o's formula
\begin{align*}
  d H(X_t)= (a-X_t)I(X_t) dt + \frac12 c^2 g(\zeta_t)\phi(X_t)dt + dM_t
\end{align*}
where $M_t$ is the Martingale defined by $dM_t = c \sqrt{g(\zeta_t)} I(X_t) d W_t$.
We conclude that
\begin{align*}
  a\int_0^t \E_{\zeta_0}[I(X_s)]\,ds \leq \E_{\zeta_0} &H(X_t) -H(X_0)
  + \int_0^t \E_{\zeta_0}\, X_s I(X_s) \, ds \\
  &+ \frac{c^2}{2}\int_0^t \E_{\zeta_0}\, g(\zeta_s)\phi^-(X_s)  \, ds\,.
\end{align*}
Now integrating over the initial conditions $\zeta_0$ (which were
distributed according to $\mu$), we see that $H$ terms are equal by
the stationarity embodied in \eqref{eq:HFinite} (and hence they
cancel) and that
\begin{align*}
a\, \mu[I\circ f] \leq \mu [ f (I\circ f)] + \frac{c^2}{2}\mu [g(\phi^-\circ f)]
\end{align*}
and since $g(\phi^-\circ f)\le1$,
\begin{align*}
  a\, \mu\big[ \,|\log f | \,\mathbf{1}\{f \leq 1\}\,\big] \leq a\,
  \mu[\, I\circ f\, ] \leq \,\mu [ \, f (I\circ f)\,]+ \frac{c^2}{2}
\end{align*}
Finally, for any $\delta \in (0,1)$
\begin{align*}
  a \,\mu\{ f \leq \delta\} &=a\, \mu \big[\, \{|\log f| \geq |\log
  \delta|\} \cap \{f \leq 1\}\,\big] \\&\leq \frac{a\,\mu\big[\,|\log
    f | \,\mathbf{1}\{f \leq 1\}\,\big]}{|\log \delta|} \leq \frac{
    \mu \big[\, f (I\circ f)\, \big]+ \frac{c^2}{2}}{|\log \delta|}
\end{align*}
The result follows, since $xI(x)\le1+x^2$.
\end{proof}

The following Corollary is a direct consequence of the two last Lemmata
\begin{corollary}\label{cor:zero}
There exists a constant $C>0$ so that for any
  $\delta\in(0,1)$
  \begin{align*}
    \sup_{\epsilon>0} \lambda^\epsilon \{ (u,v) : u + v < \delta\}
    \leq \frac{C}{|\log \delta|}
  \end{align*}
\end{corollary}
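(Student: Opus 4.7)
The plan is to apply Lemma~\ref{l:tightNearZero} to the Markov process $\widetilde\xi^\epsilon_t$ on the fast timescale, taking the scalar observable $f(\xi)=u+v$ where $(u,v)=\Phi(\xi)$. Since a state-independent time change does not alter invariant measures, $\mu^\epsilon$ is still invariant for $\widetilde\xi^\epsilon_t$, and hence $\lambda^\epsilon=\mu^\epsilon\Phi^{-1}$ satisfies $\lambda^\epsilon\{u+v\le\delta\}=\mu^\epsilon\{f\le\delta\}$, so it suffices to bound the latter quantity.

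Adding the two lines of \eqref{rho-chi} gives
\begin{align*}
d(U^\epsilon_t+V^\epsilon_t)=\bigl(2\sigma_1^2+2\sigma_2^2-2(U^\epsilon_t+V^\epsilon_t)\bigr)dt+dM^\epsilon_t,
\end{align*}
with $d\langle M^\epsilon\rangle_t\le C(U^\epsilon_t+V^\epsilon_t)dt$ for some constant $C$ independent of $\epsilon$ (this is the sum of the quadratic-variation bounds in \eqref{boundQV}, and can be read off directly from \eqref{rho-chi} using $\widetilde X^2\le U/2$, $\widetilde Y^2\le V/2$). The only departure from the precise statement of Lemma~\ref{l:tightNearZero} is that the drift coefficient in front of $f$ is $-2$ rather than $-1$; this is harmless and I would redo the one-page computation inside that lemma with the obvious constant changes. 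Explicitly, applying It\^o's formula to $H(U^\epsilon_t+V^\epsilon_t)$ with $H$ as in the proof of Lemma~\ref{l:tightNearZero}, integrating against $\mu^\epsilon$, and using stationarity (which is valid because $\mu^\epsilon[f^2]<\infty$ by Corollary~\ref{cor:xiInvMeasure}, so all terms are finite), I get
\begin{align*}
(2\sigma_1^2+2\sigma_2^2)\,\mu^\epsilon[I(f)] \le 2\,\mu^\epsilon[fI(f)]+\tfrac12\,\mu^\epsilon[\phi^-(f)\,\langle M^\epsilon\rangle'].
\end{align*}
Since $xI(x)\le 1+x^2$ and $\phi^-(x)=x^{-1}\mathbf 1_{\{x\le 1\}}$, the bound $\langle M^\epsilon\rangle'\le Cf$ shows that the last term is at most $C/2$, while Lemma~\ref{LambdaBoundAbove} gives $\mu^\epsilon[f^2]=\lambda^\epsilon[(u+v)^2]\le C_2$ uniformly in $\epsilon$.

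Combining these, $\mu^\epsilon[I(f)]$ is bounded above by an $\epsilon$-independent constant $C'$. Then, exactly as in the last display of the proof of Lemma~\ref{l:tightNearZero},
\begin{align*}
\lambda^\epsilon\{u+v\le\delta\}=\mu^\epsilon\{f\le\delta\}\le\frac{\mu^\epsilon[\,|\log f|\mathbf 1_{\{f\le 1\}}\,]}{|\log\delta|}\le\frac{\mu^\epsilon[I(f)]}{|\log\delta|}\le\frac{C'}{|\log\delta|},
\end{align*}
which is the claimed estimate.

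There is essentially no obstacle here: the nontrivial machinery (the $H$--$I$--$\phi$ construction and the stationarity argument) is already packaged in Lemma~\ref{l:tightNearZero}, the uniform second-moment bound $\sup_\epsilon\lambda^\epsilon[(u+v)^2]<\infty$ is Lemma~\ref{LambdaBoundAbove}, and the only thing to verify is that the drift coefficient $-2$ (rather than $-1$) merely shifts constants. The one point that deserves a careful word in the writeup is justifying the use of It\^o's formula and the cancellation of the $H$-terms under $\mu^\epsilon$-integration, but this follows verbatim from the proof of Lemma~\ref{l:tightNearZero} once the uniform-in-$\epsilon$ second moment bound is invoked.
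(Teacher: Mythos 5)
Your proposal is correct and follows the same route as the paper, which simply declares the corollary ``a direct consequence of the two last Lemmata'' (Lemma~\ref{LambdaBoundAbove} and Lemma~\ref{l:tightNearZero}); you have filled in the details, including the useful observation that the drift constant in $d(U^\epsilon+V^\epsilon)$ is $-2$ rather than the $-1$ in the literal statement of Lemma~\ref{l:tightNearZero}, which indeed only shifts the constants in that one-page computation (alternatively, a deterministic time rescaling $t\mapsto t/2$ brings the SDE to the stated form without changing the invariant measure).
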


\begin{proof}[Proof of Lemma~\ref{tightnessLambda}]
  The result follows immediately by combining  \\
  Lemma~\ref{LambdaBoundAbove}
  and Corollary \ref{cor:zero}.
\end{proof}

\subsection{Convergence of Quadratic variation}
\label{sec:conv-quadr-vari}
Now that we know that the collection $\{(U^\epsilon_t,V^\epsilon_t),\,
t\ge0\}_{\epsilon>0}$ is tight,
in view of Theorem \ref{th:uniq}, the weak uniqueness result for \eqref{UV},
and comparing \eqref{rho-chi} and \eqref{UV}, the weak convergence
$(U^\epsilon,V^\epsilon)\Rightarrow(U,V)$ will follow from the convergence of
the quadratic variations of $U^\epsilon$ and $V^\epsilon$ to those of $U$ and
$V$, which will be proved in the next Lemma.

For each $M>0$, let
$$\stop:=\inf\{t>0,\ U^\epsilon_t\vee V^\epsilon_t>M\}.$$

Considering the three different cases of the behavior of $(U,V)$, it
is not hard to see that in all cases $\stopL$, defined exactly as
$\stop$, but with $(U^\epsilon,V^\epsilon)$ replaced by $(U,V)$, is
a.s. a continuous function of the $(U,V)$ trajectory, hence
$$\stop\Longrightarrow \stopL\quad\text{as }\epsilon\to0$$
will follow from $(U^\epsilon,V^\epsilon)\Rightarrow(U,V)$.

In particular
$$\liminf_{\epsilon\to0}\P(\stop>t)\ge\P(\stopL>t).$$
Clearly for all $t>0$,
$$\P(\stopL>t)\to1,\quad\text{as } M\to\infty.$$
It will then follow that for any $t>0$, the lim inf as $\epsilon\to0$ of $\P(\stop>t)$ can be made arbitrarily close to 1, by choosing $M$ large enough.

\begin{lemma}\label{conv2var}
Let $\nu_\varepsilon$ be any sequence of tight probability
  measures on $\R^3$ and let $(\widetilde{X}_t^\varepsilon,\widetilde{Y}_t^\varepsilon,\widetilde{Z}_t^\varepsilon)$
  be the solution to \eqref{fast} with
  $(\widetilde{X}_0^\varepsilon,\widetilde{Y}_0^\varepsilon,\widetilde{Z}_0^\varepsilon)$ distributed as
  $\nu_\varepsilon$. Then for any $t>0$,  as $\epsilon \rightarrow 0$,
  \begin{align*}
    \int_0^t (\widetilde{X}_s^\varepsilon)^2ds  \Rightarrow\!\! \int_0^t \avg{ x^2}({U_s,V_s})ds\ \text{and}\
 \int_0^t (\widetilde{Y}_s^\varepsilon)^2ds \Rightarrow\!\!   \int_0^t \avg{ y^2}({U_s,V_s})ds.
  \end{align*}
\end{lemma}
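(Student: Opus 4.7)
My plan is to establish this averaging result by the classical strategy of partitioning time into mesoscopic intervals on which the slow variables $(U^\epsilon, V^\epsilon)$ are essentially frozen while the fast variables make many loops around a deterministic orbit determined by those frozen values. I will first reduce to the situation where $(U^\epsilon, V^\epsilon)$ stays in a compact set by using the stopping time $\stop$ introduced just before the lemma: since $\{(U^\epsilon, V^\epsilon)\}$ is tight and $\stop \Rightarrow \stopL$ with $\stopL \uparrow \infty$ as $M \to \infty$, it suffices to prove the convergence of $\int_0^{t\wedge\stop}(\widetilde{X}_s^\epsilon)^2 ds$ and then let $M \to \infty$.

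On $[0, t\wedge\stop]$ I would choose a mesoscale $\Delta = \Delta(\epsilon)$ with $\epsilon \ll \Delta(\epsilon) \ll 1$ and partition $[0, t\wedge\stop]$ into subintervals $I_k = [t_k, t_{k+1}]$ of length $\Delta$. On each $I_k$ I would couple $\widetilde{\xi}^\epsilon_\cdot$ to the deterministic flow $\varphi_{(\cdot - t_k)/\epsilon}(\widetilde{\xi}^\epsilon_{t_k})$ using Lemma~\ref{rhoBound} on the rescaled time interval of length $\Delta/\epsilon$; the lemma (applied with noise scale $\sqrt\epsilon$ and dissipation $\epsilon$ on the original scale) together with the uniform moment bound from Proposition~\ref{prop:energy} controls the error. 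The deterministic trajectory on $I_k$ makes approximately $\Delta/(\epsilon \tau(U^\epsilon_{t_k}, V^\epsilon_{t_k}))$ loops around the orbit $\Gamma_{U^\epsilon_{t_k},V^\epsilon_{t_k}}^{\sn(\widetilde{\xi}^\epsilon_{t_k})}$, and on those loops the time average of $x^2$ approaches $\avg{x^2}(U^\epsilon_{t_k}, V^\epsilon_{t_k})$ with error controlled by a single loop's contribution divided by the number of loops. Finally, using the a priori H\"older continuity in probability of $(U^\epsilon, V^\epsilon)$ (which follows from the tightness proof of Proposition~\ref{tightness}) and the continuity of $\avg{x^2}$ away from the diagonal (Proposition~\ref{avgNearDiag}), a Riemann sum argument shows $\int_0^{t\wedge\stop}(\widetilde{X}_s^\epsilon)^2 ds$ is close to $\int_0^{t\wedge\stop}\avg{x^2}(U^\epsilon_s, V^\epsilon_s) ds$.

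The main obstacle is the region where $|U^\epsilon_s - V^\epsilon_s|$ is small: there the period $\tau$ diverges logarithmically (as encoded in the $|\ln(1-r)|^{-1}$ asymptotics of Proposition~\ref{prop:Lambda}), so on intervals $I_k$ where $(U^\epsilon_{t_k}, V^\epsilon_{t_k})$ lands near the diagonal the number of completed loops can be small and the pointwise averaging argument breaks down. To handle this I would split the partition into a ``good'' set of $k$ with $|U^\epsilon_{t_k} - V^\epsilon_{t_k}| \ge \eta$ and a ``bad'' set where $|U^\epsilon_{t_k} - V^\epsilon_{t_k}| < \eta$. On the good set the averaging works uniformly once $\Delta/\epsilon$ is large compared to $|\ln \eta|$. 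On the bad set I would use the integrable bound $(\widetilde{X}_s^\epsilon)^2 \le U^\epsilon_s/2$ and $\avg{x^2}(u,v) \le u/2$, together with Proposition~\ref{avgNearDiag}, to bound both integrands by $U^\epsilon_s$, so their contribution is controlled by $\int_0^t \mathbf{1}\{|U^\epsilon_s - V^\epsilon_s|<\eta\}\, U^\epsilon_s \, ds$. By Theorem~\ref{accumul} applied along any weak subsequential limit and the uniform moment bound on $U^\epsilon$, this contribution vanishes when we send first $\epsilon\to 0$ and then $\eta \to 0$.

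Having handled the two regimes, I would combine them: passing to a weakly convergent subsequence $(U^{\epsilon_n},V^{\epsilon_n}) \Rightarrow (U,V)$ and using continuity of $(u,v) \mapsto \avg{x^2}(u,v)$ on the complement of the diagonal, together with the Skorokhod representation, yields
\begin{equation*}
\int_0^{t\wedge\stop}\avg{x^2}(U^{\epsilon_n}_s,V^{\epsilon_n}_s)\,ds \longrightarrow \int_0^{t\wedge\stopL} \avg{x^2}(U_s, V_s)\, ds
\end{equation*}
almost surely, because by Theorem~\ref{accumul} the Lebesgue measure of $\{s: U_s = V_s\}$ vanishes almost surely. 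Chaining this with the mesoscopic averaging estimate above gives the stated weak convergence; the argument for the $Y$ integral is identical by symmetry.
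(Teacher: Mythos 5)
Your overall strategy matches the paper's: reduce to compact level sets via the stopping time $\kappa^\epsilon_M$, partition time into mesoscopic blocks of length $\delta$ with $\epsilon \ll \delta \ll 1$, couple the fast stochastic dynamics to the deterministic flow on each block using Lemma~\ref{rhoBound}, and then compare block averages to the orbit average $\mathcal{A}(x^2)$. The paper even streamlines the computation by writing $2(\widetilde{X}_s^\epsilon)^2 = U_s^\epsilon - (\widetilde{Z}_s^\epsilon)^2$ and reducing everything to $\int_0^t (\widetilde{Z}_s^\epsilon)^2\,ds$, but that is a cosmetic difference.

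The genuine gap is in how you treat the near-diagonal blocks. You propose to discard them by invoking Theorem~\ref{accumul} (the limit spends zero time on the diagonal), bounding the bad-region contribution by $\int_0^t \mathbf{1}\{|U^\epsilon_s - V^\epsilon_s| < \eta\}U^\epsilon_s\,ds$ and sending $\epsilon\to0$ then $\eta\to0$. This is circular. Section~\ref{sec8} opens with ``We now prove the results which were taken for granted in Section~\ref{sec6}'', so Lemma~\ref{conv2var} is logically prior to the whole of Section~\ref{sec6}, and in particular to Theorem~\ref{accumul}. Concretely, the proof of Theorem~\ref{accumul} passes to the limit in
\begin{align*}
\E\int_0^t \left[\sigma_1^2(\tilde{X}^{\epsilon_n}_s)^2+\sigma_2^2(\tilde{Y}^{\epsilon_n}_s)^2\right]\psi_\delta(J^{\epsilon_n}_s)\,ds
\end{align*}
and identifies the limit as $\E\int_0^t[\sigma_1^2(U_s-\Gamma(U_s,V_s))+\sigma_2^2(V_s-\Gamma(U_s,V_s))]\psi_\delta(J_s)\,ds$. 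The integrand $(\tilde{X}^{\epsilon_n}_s)^2$ oscillates rapidly and has no pointwise limit, so this identification is precisely a weighted version of the quadratic-variation convergence you are trying to prove. You cannot use the conclusion to establish the hypothesis.

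The paper avoids this by not discarding the near-diagonal blocks at all, but showing they converge to the right thing. On such a block the mesoscopic time $\delta/\epsilon$ may be short compared to the (nearly infinite) period, so the ``many loops'' mechanism fails; the paper's key observation is that this does not matter because the orbit spends almost all its time near the fixed points $(0,0,\pm\sqrt{u\wedge v})$. Consequently the time average of $(Z^{(n)}_s)^2$ over a partial orbit is already close to $U^\epsilon_{t_n}\wedge V^\epsilon_{t_n}$, and by Proposition~\ref{avgNearDiag} the target $\mathcal{A}(z^2)(u,v)=\Gamma(u,v)$ also tends to $u\wedge v$ as $|u-v|\to0$. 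Both sides converge to $u\wedge v$, so the error $\Theta_{\epsilon,\rho}=A_{\epsilon,\rho}-\hat{A}_{\epsilon,\rho}$ is controlled by $K_{\epsilon,\rho,\alpha,\beta}$, which vanishes under the iterated limit $\epsilon\to0$, $\rho\to0$, $\alpha\to0$, $\beta\to0$. This matching argument is self-contained and is what makes the proof non-circular. To salvage your approach you would have to prove the zero-occupation-time statement directly from the pre-limit dynamics without using Lemma~\ref{conv2var}, which is not done in the paper and is not obviously possible given the logarithmic degeneracy of the diffusion coefficient near the diagonal.
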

  \begin{proof}
   Since
$2(\widetilde{X}_s^\varepsilon)^2=U^\varepsilon_s-(\widetilde{Z}_s^\varepsilon)^2$
and
$2(\widetilde{Y}_s^\varepsilon)^2=V^\varepsilon_s-(\widetilde{Z}_s^\varepsilon)^2$,
we only need to show that $\int_0^t (\widetilde{Z}_s^\varepsilon)^2ds
\Rightarrow \int_0^t \avg{ z^2}({U_s,V_s})ds$.  It suffices in fact to
show that
\begin{align*}
  \int_0^{t\wedge\stop} (\widetilde{Z}_s^\varepsilon)^2ds  \Longrightarrow
\int_0^{t\wedge\stopL} \avg{ z^2}({U_s,V_s})ds\ ,
\end{align*}
for all $M>0$.

The vlaues of $t>0$ and $M$ will be fixed throughout this proof.
For any $\delta>0$, we define $N_\delta=\lceil t/\delta\rceil$, $t_n=n\delta\wedge\stop$ for $0\le n<N_\delta$
and $t_{N_\delta}=t\wedge\stop$.
Let now $Z_s^{(n)}$ be the $z$ component of the solution
to the deterministic dynamics \eqref{epsZero} at time $s$ which started at time $t_{n}$ from the
point
$(X_{t_{n}}^\epsilon,Y_{t_{n}}^\epsilon,Z_{t_{n}}^\epsilon)$. Then clearly
\begin{equation}\label{split}
\begin{split}
  \int_0^{t\wedge\stop} (\widetilde{Z}_s^\varepsilon)^2ds  = \epsilon\sum_{n=0}^{N_\delta-1}
  \int_{t_{n}/\epsilon}^{t_{n+1}/\epsilon} (Z_s^\epsilon)^2 ds= \Phi_{\epsilon,\delta}+
  \Xi_{\epsilon,\delta}
\end{split}
\end{equation}
where
\begin{align*}
\Phi_{\epsilon,\delta}&= \delta\sum_{n=0}^{N_\delta-1}\frac{\epsilon}{\delta}
  \int_{t_{n}/\epsilon}^{t_{n+1}/\epsilon} (Z_s^{(n)})^2 ds \\
  \Xi_{\epsilon,\delta}&=\epsilon\sum_{n=0}^{N_\delta-1}
  \int_{t_{n}/\epsilon}^{t_{n+1}/\epsilon} \left[(Z_s^\epsilon)^2-(Z_s^{(n)})^2\right] ds.
\end{align*}

 To control the error term observe that
 \begin{align*}
   |\Xi_{\epsilon,\delta}|\le\sqrt{\epsilon\sum_{n=0}^{N_\delta-1}
  \int_{t_{n}/\epsilon}^{t_{n+1}/\epsilon} \left[Z_s^\epsilon+Z_s^{(n)}\right]^2 ds}
  \sqrt{\epsilon\sum_{n=0}^{N_\delta-1}
  \int_{t_{n}/\epsilon}^{t_{n+1}/\epsilon} \left[Z_s^\epsilon-Z_s^{(n)}\right]^2 ds}\,.
 \end{align*}
The first term in the product  on the righthand side is
bounded due to the stopping time $\stop$. Using Lemma~\ref{rhoBound},
we see that
$\E|\Xi_{\eps,\delta}|$ is bounded by a constant times the square root of
\begin{align*}
  \epsilon\sum_{n=0}^{N_\delta-1}
  \int_{t_{n}/\epsilon}^{t_{n+1}/\epsilon} \E\left[Z_s^\epsilon-Z_s^{(n)}\right]^2 ds
  \le C_M \epsilon^2\lceil\frac{t}{\delta}\rceil\exp\left[C_M\frac{\delta}{\epsilon}\right]\,.
\end{align*}
Hence if we choose
  \begin{equation}\label{delta}
  \delta=C_M^{-1}\epsilon\log(1/\epsilon),
  \end{equation}
   then $\Xi_{\epsilon,\delta}\to0$ in $L^1(\Omega)$ as
   $\epsilon\to0$. Having made this choice of $\delta$, we now
   suppress it from notation designating dependence on parameters.

   We now further divide $\Phi_{\epsilon}$ ($\delta$ having been
   suppressed) depending on whether in phase space the starting point
   $(X_{t_n/\epsilon}^{(n)}
   ,Y_{t_n/\epsilon}^{(n)},Z_{t_n/\epsilon}^{(n)})$ lies the region
   where $|U_{t_n}^\epsilon - V_{t_n}^\epsilon|$ is small or not.  To accomplish this,
   for any $\rho>0$, let $\chi_\rho\in C(\R;[0,1])$ be such that
\begin{align*}
  \chi_\rho(x)=\begin{cases}
    0&,\text{if $|x|\ge\rho$},\\
    1&,\text{if $|x|\le\rho/2$}
  \end{cases}
\end{align*}
and define $\bar\chi_\rho=1- \chi_\rho$. Consider the decomposition
$\Phi_{\epsilon}= A_{\epsilon,\rho}+B_{\epsilon,\rho}$ where
\begin{align*}
  A_{\epsilon,\rho}=& \delta\sum_{n=0}^{N_\delta-1}\frac{\epsilon}{\delta}
\chi_\rho(U_{t_n}^\epsilon -V_{t_n}^\epsilon)   \int_{t_{n}/\epsilon}^{t_{n+1}/\epsilon} (Z_s^{(n)})^2 ds,\\
B_{\epsilon,\rho}=& \delta\sum_{n=0}^{N_\delta-1}\frac{\epsilon}{\delta}
  \bar\chi_\rho(U_{t_n}^\epsilon -V_{t_n}^\epsilon) \int_{t_{n}/\epsilon}^{t_{n+1}/\epsilon} (Z_s^{(n)})^2 ds\,.
\end{align*}
The reason for this decomposition is that why the time average
of $(Z^{n})^2$ over the time interval $[t_{n+1},t_n]$  is close to the
function $\mathcal{A}(z^2)(U_{t_n},V_{t_n})$ is different in
the two regions. The terms which have $|u-v|>\rho$ have periods
uniformly bounded from above and hence as $(t_{n+1}-t_n)/\epsilon=\delta/\epsilon \rightarrow
\infty$ the number of periods contained in the interval over which we
are averaging also goes to infinity. On the other hand, as the points
approach the diagonal $u=v$ the period grows to infinity. So for
$|u-v|$ small enough the period might be  much greater  than
the length of the time interval $(t_{n+1}-t_n)/\epsilon=\delta/\epsilon$ over which we are
averaging. Hence the reason for convergence for the
$A_{\epsilon,\rho}$ to the appropriate average values occurs by a
different mechanism. Proposition~\ref{avgNearDiag} shows that
$\mathcal{A}(z^2)(u,v) \rightarrow u=v$ as $|u-v| \rightarrow
0$. To understand why
$\frac{\epsilon}{\delta}\int_{t_{n}/\epsilon}^{t_{n+1}/\epsilon}
(Z_s^{(n)})^2ds \rightarrow U_{t_n} \wedge V_{t_n}$ one needs to
recall the discussion from Section~\ref{sec:detDynam}. The
deterministic orbits when $u=v$ consist of heteroclinic orbits
connecting the fixed points at $(0,0,\sqrt{u})$ and $(0,0,-\sqrt{u})$.
Since the time to reach the fixed points on these orbits in infinite, it
is not surprising that for $|u-v|$ small the periodic orbit spends
most of its time near $(0,0,\pm\sqrt{u})\sim
(0,0,\pm\sqrt{v})$. This can also be seen in the fact that the
occupation measures given in \eqref{ocupation} concentrates around
$\theta\sim \pi/2, 3\pi/2$, which corresponds to the fixed points, if $|u-v|\sim 0$.
Importantly, even when the time is not long enough
to traverse the orbit completely, any average will be concentrated
near the fixed points since the time to reach the neighborhood of the fixed
point is small relative to the time it will take to leave that
neighborhood once it has arrived there. This idea will be made
quantitative below.

Hence we define
\begin{align*}
   \hat A_{\epsilon,\rho}&= \delta\sum_{n=0}^{N_\delta-1}
\chi_\rho(U_{t_n}^\epsilon -V_{t_n}^\epsilon) \,  (U_{t_n}^\epsilon \wedge
V_{t_n}^\epsilon ) \quad\text{and}\quad \Theta_{\epsilon,\rho}=
A_{\epsilon,\rho}-\hat  A_{\epsilon,\rho}\\
\hat B_{\epsilon,\rho}&=\delta \sum_{n=0}^{N_\delta-1}\bar
\chi_\rho(U_{t_n}^\epsilon -V_{t_n}^\epsilon) \gamma_\epsilon(U^\epsilon_{t_n},V^\epsilon_{t_n})  \mathcal{A}(z^2)(U^\epsilon_{t_{n}},
V^\epsilon_{t_{n}})  \quad\text{and}\quad \Upsilon_{\epsilon,\rho}=B_{\epsilon,\rho} -\hat B_{\epsilon,\rho}
\end{align*}
where
\begin{align*}
   \gamma_\epsilon(u,v) &= \bigg\lfloor \frac{\delta}{\epsilon
     \tau(u,v)}\bigg\rfloor  \frac{\epsilon \tau(u,v)}{\delta}\,,
   \end{align*}
and  $\tau(u,v)$ was the period of the deterministic orbit.

  For all $\beta >0$ and $\alpha >\rho$, we define
\begin{align*}
  \hat \tau_{\beta,\rho} &= \min\Big\{ \tau(u,v)  :  |u-v| \leq \rho ,
  \beta \leq u\wedge v \leq u \vee v \leq M\Big\}, \\
  \Psi_{\alpha,\rho}&= 4 \sqrt{M}\,\int_0^{1-\alpha}
  \frac{1}{|1-a^2|} da \, .
\end{align*}
The utility of $\Psi$ is the following which can be deduced from Section~\ref{avgs}
\begin{align*}
  \sup_{u,v \leq M} \text{Leb}\left(\{s>0,\, |z_s| \not \in [(1-\alpha)
    \sqrt{u\wedge v},\sqrt{u\wedge v}] \}\right) \leq  \Psi_{\alpha,\rho}\,
  .
\end{align*}
Now
\begin{align*}
  | \Theta_{\epsilon,\rho}| \leq t \Big[ M \big(
  \frac{\Psi_{\alpha,\rho}}{\delta/\epsilon} +
    \frac{\Psi_{\alpha,\rho}}{ \hat \tau_{\beta,\rho}} \big) + 2 \beta
    \Big]\eqdef K_{\epsilon,\rho,\alpha,\beta}\, .
\end{align*}
On the other hand, since
 \begin{align*}
\Upsilon_{\eps,\rho}&=\epsilon\sum_{n=0}^{N_\delta-1}
\bar\chi_\rho(U_{t_n}^\epsilon -V_{t_n}^\epsilon)
\int_{\frac{t_n}{\epsilon}+\left[\frac{\delta}{\epsilon\tau_n^\epsilon}\right]\tau_n^\epsilon}^{\frac{t_{n+1}}{\epsilon}}(Z^{(n)}_r)^2 dr\, ,
\end{align*}
we have the inequality
\begin{align*}
 |\Upsilon_{\eps,\rho}|&\le M\frac{\epsilon}{\delta}\ \delta\sum_{n=0}^{N_\delta-1}
 \bar\chi_\rho(U_{t_n}^\epsilon -V_{t_n}^\epsilon)
\tau(U^\epsilon_{t_n},V^\epsilon_{t_n})\\
&\le \frac{\epsilon}{\delta}\bar M(\rho)\eqdef L_{\epsilon,\rho}\,,
\end{align*}
where $\bar M(\rho):=\sup_{u,v\le M}\bar\chi_\rho(u,v)\tau(u,v)<\infty$ if $\rho>0$. Hence $L_{\epsilon,\rho}\to0$ as $\epsilon\to0$, for any $\rho>0$.

Now  applying  Lemma~\ref{convunif} below, we see that for all $\rho>0$, as $\epsilon \rightarrow 0$
one has
\begin{multline*}
  \hat A_{\epsilon,\rho} + \hat B_{\epsilon,\rho} \Longrightarrow
  \hat A_{\rho} + \hat B_{\rho}\eqdef\int_0^{t
    \wedge \kappa_M} \chi_\rho(U_s-V_s)\, ( U_s\wedge V_s)
  ds \\+ \int_0^{t \wedge \kappa_M}\bar \chi_\rho(U_s -V_s)
 \, \mathcal{A}(z^2)(U_s,V_s) ds
\end{multline*}
Notice that as $\rho \rightarrow 0$, $\hat A_{\rho} +\hat  B_{\rho}$
converges to
\begin{align*}
\int_0^{t    \wedge \kappa_M} \one_{U_s=V_s}\, ( U_s\wedge V_s)
  ds + \int_0^{t \wedge \kappa_M} \one_{U_s\neq V_s}\, \mathcal{A}(z^2)(U_s,V_s) ds\,.
\end{align*}
By Proposition~\ref{avgNearDiag}, we see that $\one_{U_s=V_s}\,
( U_s\wedge V_s)= \one_{U_s=V_s}\,   \mathcal{A}(z^2)(U_s,V_s)$ since
$(x,y,z) \mapsto z^2$ evaluated at $(0,0,\sqrt{u\wedge v})$ is $u
\wedge v$. (Of course $u
\wedge v=u=v$ since we are considering the case $u=v$.)
In light of this, we
conclude that $\hat A_{\rho} +\hat B_{\rho}$ converges to
\begin{align*}
   \int_0^{t \wedge \kappa_M} \mathcal{A}(z^2)(U_s,V_s) ds
\end{align*}
as $\rho \rightarrow 0$.

Now let $F \in C(\R^+,\R^+)$ be any increasing, bounded
function. Then
\begin{multline*}
  F(\hat A_{\epsilon,\rho} + \hat B_{\epsilon,\rho} -
  K_{\epsilon,\rho,\alpha,\beta}-L_{\epsilon,\delta}) \leq F(A_{\epsilon,\rho} +
  B_{\epsilon,\rho}) \leq   \\F(\hat A_{\epsilon,\rho} + \hat B_{\epsilon,\rho} +
  K_{\epsilon,\rho,\alpha,\beta}+L_{\epsilon,\delta}).
\end{multline*}
Observe that $A_{\epsilon,\rho} +
  B_{\epsilon,\rho}=\Phi_\epsilon$ and hence is independent of the
  choice of $\rho$. Since $F$ is bounded and as already noted $L_{\epsilon,\rho}
  \rightarrow 0$ as $\epsilon \rightarrow0$ for any $\rho>0$, we have that
  \begin{align*}
\E F(    \hat A_{\rho} +\hat  B_{\rho} -
  K_{\rho,\alpha,\beta}) &\leq \varliminf_{\epsilon \rightarrow 0} \E
  F(\Phi_\epsilon) \\&\leq \varlimsup_{\epsilon \rightarrow 0} \E
  F(\Phi_\epsilon) \leq  \E F(    \hat A_{\rho} +\hat B_{\rho} +
  K_{\rho,\alpha,\beta}),
 \end{align*}
where
\begin{align*}
   K_{\rho,\alpha,\beta}=\lim_{\epsilon \rightarrow0}
   K_{\epsilon,\rho,\alpha,\beta} =  t \Big[ M \big(
    \frac{\Psi_{\alpha,\rho}}{ \hat \tau_{\beta,\rho}} \big) + 2 \beta
    \Big]\,.
\end{align*}
Now since $K_{\rho,\alpha,\beta} \rightarrow 0$ if $\rho \rightarrow
0$, followed by $\alpha \rightarrow 0$, followed by $\beta
\rightarrow0$ we obtain that  $\lim_{\eps\to0}\E
  F(\Phi_\epsilon)$ exists and equals
  $\lim_{\rho\to0}\E F(    \hat A_{\rho} +\hat B_{\rho})$.

It remains to exploit Lemma \ref{BV} below to deduce that
$$\Phi_\epsilon\Rightarrow \int_0^{t \wedge \kappa_M} \mathcal{A}(z^2)(U_s,V_s) ds$$
as $\epsilon\to0$.
  \end{proof}

\begin{lemma}\label{convunif}
  Let $X_n$ be a sequence of $\mathcal{X}$--valued r. v.'s, and $X$ be
  such that $X_n\Rightarrow X$, where $\mathcal{X}$ is a separable
  Banach space. Let $\{F_n,\ n\ge1\}$ be a sequence in
  $C(\mathcal{X})$, which is such that as $n\to\infty$, $F_n\to F$
  uniformly on each compact subset of $\mathcal{X}$. Then
  $F_n(X_n)\Rightarrow F(X)$, as $n\to\infty$.
\end{lemma}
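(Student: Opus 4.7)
The plan is to reduce convergence in distribution to almost sure convergence via the Skorokhod representation theorem and then exploit compactness of the range of a convergent sequence. Since $\mathcal{X}$ is a separable Banach space it is Polish, so the hypothesis $X_n \Rightarrow X$ allows one to construct, on an auxiliary probability space $(\widetilde\Omega,\widetilde{\mathcal F},\widetilde\P)$, random variables $\widetilde X_n$ and $\widetilde X$ with the same laws as $X_n$ and $X$ respectively and such that $\widetilde X_n \to \widetilde X$ $\widetilde\P$-a.s. I would also note up front that a uniform-on-compacts limit of continuous functions is continuous, so $F\in C(\mathcal{X})$.

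The key observation is that, for any $\omega$ in the full-measure set where $\widetilde X_n(\omega)\to\widetilde X(\omega)$, the set
\begin{equation*}
K(\omega)\eqdef\{\widetilde X_n(\omega):n\ge1\}\cup\{\widetilde X(\omega)\}
\end{equation*}
is compact in $\mathcal X$, being a convergent sequence together with its limit. Splitting by the triangle inequality,
\begin{equation*}
\bigl|F_n(\widetilde X_n(\omega))-F(\widetilde X(\omega))\bigr|
\le \sup_{y\in K(\omega)}|F_n(y)-F(y)|\;+\;\bigl|F(\widetilde X_n(\omega))-F(\widetilde X(\omega))\bigr|,
\end{equation*}
the first term tends to $0$ by the uniform convergence of $F_n$ to $F$ on the compact $K(\omega)$, while the second tends to $0$ by continuity of $F$ at $\widetilde X(\omega)$. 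Hence $F_n(\widetilde X_n)\to F(\widetilde X)$ $\widetilde\P$-a.s., which of course implies $F_n(\widetilde X_n)\Rightarrow F(\widetilde X)$. Since $F_n(\widetilde X_n)$ has the same law as $F_n(X_n)$ and $F(\widetilde X)$ has the same law as $F(X)$, the desired conclusion $F_n(X_n)\Rightarrow F(X)$ follows.

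There is no real obstacle: the only point to verify carefully is that the applicability of Skorokhod's representation is legitimate, which holds because a separable Banach space is Polish. The only tiny subtlety worth flagging in the write-up is that the uniform-on-compacts hypothesis must be used twice, once to produce the continuity of the limit $F$ and once to kill the first term in the triangle inequality above.
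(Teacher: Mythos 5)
Your proof is correct, but it takes a genuinely different route from the paper's. You invoke the Skorokhod representation theorem (legitimate, since a separable Banach space is Polish), reduce to almost sure convergence, and then exploit the fact that a convergent sequence together with its limit is compact, so that the uniform-on-compacts hypothesis and continuity of $F$ finish the job pointwise. The paper instead argues directly at the level of weak convergence: it uses Prohorov's theorem to extract a single compact set $K$ with $\P(X_n \notin K) \le \epsilon$ uniformly in $n$, tests against an arbitrary bounded continuous $G$ with $\sup|G|\le 1$, and splits
\begin{equation*}
\bigl|\E[G\circ F_n(X_n)]-\E[G\circ F(X)]\bigr|
\le \bigl|\E[G\circ F_n(X_n)-G\circ F(X_n);\,X_n\in K]\bigr| + 2\epsilon + \bigl|\E[G\circ F(X_n)-G\circ F(X)]\bigr|,
\end{equation*}
controlling the first term by uniform convergence of $F_n\to F$ on $K$ together with uniform continuity of $G$ on the (bounded) union of images, and the last by $X_n\Rightarrow X$ and continuity of $F$. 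Both proofs rely on the Polish structure in an essential way (you via Skorokhod, the paper via Prohorov), so neither is more general; your version is arguably more transparent once the representation is in hand, while the paper's avoids invoking Skorokhod and keeps everything at the level of test functions and tightness. One small point worth retaining from your write-up is the explicit remark that $F$ is automatically continuous as a uniform-on-compacts limit of continuous functions; the paper uses this implicitly in its last step.
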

\begin{proof}[Proof of Lemma~\ref{convunif}] Choose $\epsilon>0$
  arbitrary, and let $K$ be a compact subset of $\mathcal{X}$ such
  that $\P(X_n\not\in K)\le\epsilon$, for all $n\ge1$. Now choose $n$
  large enough such that $|F_n(x)-F(x)|\le\epsilon$, for all $x\in
  K$. Choose an arbitrary $G\in C_b(\R)$, such that
  $\sup_x|G(x)|\le1$.  We have
\begin{align*}
|\E[G\circ F_n(X_n)]-\E[G\circ F(X)|&\le|\E[G\circ F_n(X_n)-G\circ F(X_n);X_n\in K]|
\\&\quad+2\epsilon+|\E[G\circ F(X_n)-G\circ F(X)]|
\end{align*}
 The first term  of the righthand side can be made arbitrarily small by choosing $\epsilon$ small,
 uniformly in $n$, since $G$ is uniformly continuous on the union of the images of $K$ by the $F_n$'s.
The last term clearly goes to zero as $n\to\infty$.
\end{proof}

\begin{lemma}\label{BV}
  Let $\{X_n,\, n\ge1\}$ and $X$ denote real--valued random variables,
  defined on a given probability space $(\Omega,\F,\P)$. A sufficient
  condition for $X_n\Rightarrow X$ is that
$$\E[F(X_n)]\to\E[F(X)],$$ for any continuous, bounded and increasing function $F$.
\end{lemma}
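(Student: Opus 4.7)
The plan is to reduce weak convergence to convergence of distribution functions at continuity points (Portmanteau), and to obtain that convergence by sandwiching the indicator $\mathbf{1}_{(x,\infty)}$ between two continuous bounded increasing functions that approximate it from below and above. Unlike a general continuous bounded test function, the indicator of a half-line is monotone, so it is amenable to approximation by functions in the class for which convergence is assumed.

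Concretely, fix $x \in \R$ and $\varepsilon>0$. Define continuous, bounded, increasing functions
\begin{equation*}
\psi^+_{\varepsilon}(t)=\begin{cases}0,& t\le x,\\ (t-x)/\varepsilon,& x\le t\le x+\varepsilon,\\ 1,& t\ge x+\varepsilon,\end{cases}
\qquad
\psi^-_{\varepsilon}(t)=\begin{cases}0,& t\le x-\varepsilon,\\ (t-x+\varepsilon)/\varepsilon,& x-\varepsilon\le t\le x,\\ 1,& t\ge x,\end{cases}
\end{equation*}
so that $\psi^+_{\varepsilon}\le \mathbf{1}_{(x,\infty)}\le \psi^-_{\varepsilon}$. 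Applying the hypothesis to $\psi^\pm_{\varepsilon}$ yields
\begin{equation*}
\E[\psi^+_{\varepsilon}(X)]\le \liminf_{n\to\infty}\P(X_n>x)\le \limsup_{n\to\infty}\P(X_n>x)\le \E[\psi^-_{\varepsilon}(X)].
\end{equation*}
Letting $\varepsilon\downarrow 0$ and using bounded convergence, the left and right sides tend to $\P(X>x)$ and $\P(X\ge x)$ respectively. If $x$ is a continuity point of the distribution function of $X$, these two quantities coincide, so $\P(X_n>x)\to\P(X>x)$ and equivalently $\P(X_n\le x)\to\P(X\le x)$.

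The conclusion $X_n\Rightarrow X$ then follows from the classical Portmanteau theorem, which characterises weak convergence on $\R$ by convergence of distribution functions at every continuity point of the limit. I do not foresee any real obstacle: the only delicate point is keeping the approximating functions both increasing and bounded, which is exactly what $\psi^\pm_{\varepsilon}$ above achieve; the rest is a one-line application of Portmanteau.
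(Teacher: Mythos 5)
Your proof is correct, and it takes a somewhat different (and more direct) route than the paper. You directly establish $F_{X_n}(x)\to F_X(x)$ at every continuity point $x$ of $F_X$ by sandwiching $\mathbf{1}_{(x,\infty)}$ between the continuous, bounded, \emph{increasing} ramps $\psi^\pm_\varepsilon$, and then invoke the classical equivalence between weak convergence and pointwise convergence of distribution functions at continuity points of the limit. The paper instead first observes that the hypothesis automatically extends to all continuous bounded functions of bounded variation (a BV function being a difference of bounded increasing functions), uses a particular such function $F_M$ --- vanishing on $[-M+1,M-1]$ and equal to $1$ off $[-M,M]$, hence not monotone but BV --- to obtain $\limsup_n\P(|X_n|>M)\le\E[F_M(X)]$ and thereby tightness of $\{X_n\}$, and then concludes by noting that the monotone (or BV) test functions form a class separating probability measures. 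Your version avoids the detour through tightness and the BV extension and is arguably the cleaner argument; the paper's version has the minor pedagogical merit of making the tightness of $\{X_n\}$ explicit, which is the point being emphasised where the lemma is applied (at the end of the proof of Lemma \ref{conv2var}). Both are complete proofs.
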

\begin{proof}[Proof of Lemma~\ref{BV}]
  It is plain that the condition of the Lemma implies that
  $\E[F(X_n)]\to\E[F(X)]$ for any $F$ continuous, bounded with bounded
  variations. Associating to each $M>0$ a continuous function $F_M$
  from $\R$ into $[0,1]$, which is decreasing on $\R_-$ and increasing
  on $\R_+$, equal to zero on the interval $[-M+1,M-1]$, and to one
  outside the interval $[-M,M]$, we note that the condition of the
  Lemma implies that
\begin{align*}
\limsup_{n\to\infty}\P(|X_n|>M)&\le
\lim_{n\to\infty}\E[F_M(X_n)]\\ &=\E[F_M(X)].
\end{align*}
Since the last right--hand side can be made arbitrarily small by
choosing $M$ large enough, the last statement implies tightness of the
sequence $\{X_n,\, n\ge1\}$. Consequently $X_n\Rightarrow X$ will
follow if $\E[F(X_n)]\to\E[F(X)]$ for any $F$ in a class of continuous
and bounded functions which separates probability measures, which
clearly is the case under the condition of the theorem.
\end{proof}

\section{Proof of Theorem \ref{limitinvmeas}}\label{sec9}
Recall that for each $\epsilon>0$ $\mu^\epsilon$ denotes the unique
  invariant probability measure of $\xi^\epsilon$, and that $\lambda$ denotes the unique invariant probability measure of the
 diffusion process $(U_t,V_t)$ or equivalently of its semigroup $Q_t$.

The fact that any accumulation point of the collection $\{\mu^\epsilon,\, \epsilon>0\}$
satisfies $\mu=\mu\sym_\pm^{-1}$ follows from Proposition
\ref{symInvMeasure}. Corollary~\ref{cor:uniqRepresentationThm} states
that at most one invariant probability measure of \eqref{epsZero} satisfies both
$\mu=\mu\sym_\pm^{-1}$ and $\mu\Phi^{-1}=\lambda$.

   From Corollary~\ref{cor:xiInvMeasure} we know the collection
$\{\mu^\epsilon,\, \epsilon>0\}$ is tight. Consequently, there exists a
sequence $\epsilon_n\to0$ and a measure $\tilde \mu$, such that $\mu^{\en}\Rightarrow\tilde\mu$.

We now show that this $\tilde \mu$ is invariant for the $\xi_r$ dynamics. Fix an arbitrary $t>0$. If we initialize $\xi^\en$ with its invariant probability measure $\mu^\en$, then
both marginal laws of the pair $(\xi^\en_0,\xi^\en_t)$ equal $\mu^\en$. Since
$(\xi^\en_0,\xi^\en_t)\Rightarrow(\xi_0,\xi_t)$,
 we deduce that if
$\xi_0\simeq \tilde \mu$, then
$\xi_t\simeq\tilde\mu$, and this is true for all $t>0$, hence $\tilde\mu$ is invariant for $\xi$.

Next recall that for each $\epsilon>0$, we defined
$\lambda^\epsilon:=\mu^\epsilon \Phi^{-1}$. Since both marginal laws of the pair
$(\xi^\en_0,\xi^\en_{t/\epsilon})$ equal $\mu^\en$, we conclude that both marginal laws
of the pair $((U^\en_0,V^\en_0),(U^\en_t,V^\en_t))$ equal
$\lambda^\en$.
We  now show that  $\tilde \lambda=\tilde\mu\Phi^{-1}$ is an
invariant measure for $Q_t$. Combining the fact that
\begin{align*}
  ((U^\epsilon_0,V^\epsilon_0),(U^\epsilon_t,V^\epsilon_t))\Rightarrow((U_0,V_0),(U_t,V_t))\,,
\end{align*}
$\Phi$ is
continuous and that from Lemma~\ref{tightnessLambda} we know that
$\tilde\lambda$ is supported on
$(0,+\infty)\times(0,+\infty)$,
we conclude that $\lambda^\en\Rightarrow\tilde\lambda$. Since the marginals are equal for all
$t>0$, we conclude that $\tilde \lambda$ is an invariant probability measure for
$Q_t$. Since from  Theorem~\ref{thm:invMeasureUV}, $Q_t$ has the unique
invariant probability measure $\lambda$, we conclude that
$\tilde \lambda = \lambda$.

Hence $\tilde\mu=\mu$, and $\mu^\epsilon\Rightarrow\mu$, as $\epsilon\to0$.

Because $\lambda=\mu\Phi^{-1}$ does no charge the diagonal, $\mu$ does not charge the set
$\{x=y\}\cup\{x=-y\}$.
Pick a point $(x,y,z)\in\R^3$, with $|x|\not=|y|$ and let $(u,v)=\Phi(x,y,z)$. Assume that
$(x,y,z)\in\Gamma^+_{u,v}$ (the other case is treated exactly in the same way). To $(x,y,z)$
corresponds a value $\theta$ of the parameter on  $\Gamma^+_{u,v}$ defined in Section~\ref{sec:avg}.
There exists a smooth bijection $\Psi$ with a smooth inverse from an open neighborhood $\mathcal{O}$ of
$(x,y,z)$ onto an open neighborhood $\mathcal{U}$ of $(u,v,\theta)$. The restriction of $\mu$ to $\mathcal{O}$
is the image by $\Psi^{-1}$ of the restriction to $\mathcal{U}$ of the measure
$$\frac{K_{u,v}}{|\cos(\phi_{u,v}(\theta)|}\rho(u,v)d\theta dudv,$$
where $\rho$ denotes the density of $\lambda$ with respect to Lebesgue measure and we have used
formula \eqref{ocupation}. Hence the restriction of $\mu$ to $\mathcal{O}$  is absolutely continuous with respect to Lebesgue
measure on $\R^3$, with a density which is positive at $(x,y,z)$ since $\rho(u,v)>0$.

\vspace{1em}
\section*{Acknowledgments}
The authors are  indebted to two anonymous referees who pointed some errors in the original version of this paper, to Alexander Veretennikov for
bringing \cite{Por} to our attention, and to Hans--Juergen Engelbert, for discussions on uniqueness/non--uniqueness for
one--dimensional SDEs.   JCM thanks the National Science
Foundation for its support through the grant NSF-DMS-08-54879 (FRG).

\end{document}